\newtheorem{theorem}{Theorem}[section]
\newtheorem{lemma}[theorem]{Lemma}
\newtheorem{corollary}[theorem]{Corollary}
\newtheorem{conjecture}[theorem]{Conjecture}
\newtheorem{proposition}[theorem]{Proposition}
\newtheorem{example}[theorem]{Example}
\newtheorem{question}[theorem]{Question}
\newtheorem*{claim*}{Claim}
\theoremstyle{definition}
\newtheorem{definition}[theorem]{Definition}
\theoremstyle{theorem}
\newtheorem{theoremA}{Theorem}
\theoremstyle{remark}
\newtheorem{remark}[theorem]{Remark}
\newcommand{\rom}[1]{\text{\uppercase\expandafter{\romannumeral #1\relax}}}
\newcommand{\inj}{\hookrightarrow}
\def\Z{\mathbb Z}
\def\R{\mathbb R}
\def\xtt{\mathtt{x}}
\def\Pcal{\mathcal{P}}
\def\Tcal{\mathcal{T}}
\def\defeq{\vcentcolon=}
\def\scl{\mathrm{scl}}
\newcommand{\bgamma}[1]{{\gamma_{#1}^{-1}}}
\newsavebox{\@brx}
\newcommand{\llangle}[1][]{\savebox{\@brx}{\(\m@th{#1\langle}\)}%
	\mathopen{\copy\@brx\kern-0.5\wd\@brx\usebox{\@brx}}}
\newcommand{\rrangle}[1][]{\savebox{\@brx}{\(\m@th{#1\rangle}\)}%
	\mathclose{\copy\@brx\kern-0.5\wd\@brx\usebox{\@brx}}}
\numberwithin{equation}{section}
\title{The Kervaire conjecture and the minimal complexity of surfaces}
\author{Lvzhou Chen}
\address{Department of Mathematics\\ Purdue University\\ West Lafayette, Indiana, USA}
\email[L.~Chen]{lvzhou@purdue.edu}
\begin{document}
\maketitle

\begin{abstract}
	The Kervaire conjecture asserts that adding a generator and then a relator to a nontrivial group always results in a nontrivial group. We introduce new methods from stable commutator length to study this type of problems about nontriviality of one-relator quotients. Roughly, we show that surfaces in certain HNN extensions bounding a given word have complexity no less than the complexity of its boundary. 
	A consequence of this is a Freiheitssatz theorem for HNN extensions, which in particular implies and gives a new proof of Klyachko's theorem that confirms the Kervaire conjecture for torsion-free groups.
	As another application, we also generalize the following theorem of Klyachko--Lurye to HNN extensions: For any group $G$ and the quotient $Q$ of $G\star\Z$ by any proper power $w^m$ with $w\in G\star\Z$ projecting to $1\in\Z$, the natural map $G\to Q$ is injective.
\end{abstract}

\section{Introduction}\label{sec: intro}
Many problems in low-dimensional topology have group-theoretic formulations. 
Some of these topological problems remain unsolved due to our lack of understanding of groups obtained from some rather simple operations.
One example is the following basic question about \emph{one-relator products}, which are one-relator quotients of free products.
\begin{question}
	For a free product $H=\star_{\lambda\in\Lambda} G_\lambda$ of nontrivial groups $\{G_\lambda\}_{\lambda\in \Lambda}$ with $|\Lambda|\ge2$, for which $w\in H$ is the quotient $H/\llangle w\rrangle$ nontrivial, where $\llangle w\rrangle$ is the normal subgroup generated by $w$?
\end{question}

For $|\Lambda|\ge3$, it is conjectured that $H/\llangle w\rrangle$ is nontrivial for every $w\in H$; see for instance \cite[Conjecture 9.5]{Gordon:DehnSurg}. This is a generalization\footnote{The knot group is normally generated by a single element (the meridian) via an easy application of van Kampen's theorem using simply connectedness of $S^3$, and thus, so is the fundamental group of a Dehn surgery as it is a quotient of the knot group.} of the unsolved three summand conjecture in $3$-manifold topology, which asserts that no Dehn surgery of $S^3$ along any knot can have three or more summands in the prime decomposition. The case $|\Lambda|\ge3$ is known when all factors are cyclic groups by a theorem of Howie \cite{Howie_cyclic}. On the other hand, if one can find three free factors such that they are finitely generated perfect groups and the conjecture holds for their free product, then it would give a negative answer to the Wiegold question \cite[Question 5.52]{Kourovka}: Is every finitely generated perfect group normally generated by a single element?

The analogous statement fails when $|\Lambda|=2$, for instance $H/\llangle w \rrangle$ is trivial for $w=ab$ when $H=(\Z/m)\star(\Z/n)$ with natural generators $a,b$ and $m,n$ coprime. 
However, one still expects $H/\llangle w \rrangle$ to be nontrivial for all $w$ when the factors are torsion-free:

\begin{conjecture}\label{conj: torsion-free one-relator product}
	If $A$ and $B$ are nontrivial torsion-free groups, then $H/\llangle w\rrangle$ is nontrivial for any $w\in H=A\star B$.
\end{conjecture}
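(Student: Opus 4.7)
The plan is to argue by contradiction using a surface-complexity bound in the spirit of the paper's main results. Suppose $H/\llangle w\rrangle=1$ and pick a nontrivial $a\in A$. Then we can write $a=\prod_{i=1}^n g_i w^{\epsilon_i} g_i^{-1}$ in $H$, and this equation is witnessed geometrically by a map $f\colon S\to Y$ from a compact orientable surface $S$ into a presentation $2$-complex $Y$ for the quotient, where one distinguished boundary component of $S$ maps to $a$ and the other $n$ boundary components each map over the relator $2$-cell along $w^{\pm1}$. The goal is to rule out the existence of such a surface for any $n$.

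First I would reduce to the case where $w$ is cyclically reduced of syllable length at least $2$ in $A\star B$. If $w$ is conjugate into a single factor, say $w\in A$ after conjugation, then $H/\llangle w\rrangle\cong (A/\llangle w\rrangle_A)\star B$, and triviality of the quotient forces $B=1$, contradicting the hypothesis that $B$ is nontrivial. So we may assume $w$ is cyclically reduced of syllable length at least $2$.

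Next I would convert the free-product setting into an HNN-extension setting in which the main surface-complexity inequality of the paper applies. One natural route is to enlarge $H$ to an HNN extension $\widehat{H}=H\star_\phi\langle t\rangle$ with a suitable (possibly trivial) edge identification, observe that the admissible surface $f$ pushes forward to an admissible surface for a related word in $\widehat{H}$, and transfer the problem to one about $\scl$-type invariants there. The paper's inequality should then read $-\chi(S)\ge c(\partial S)$ for an appropriate combinatorial boundary complexity $c$ measuring the syllable length of $a$ plus a contribution from each $w^{\pm 1}$-component. Since $a$ is nontrivial in the free factor $A$, its contribution to $c(\partial S)$ is strictly positive; passing to $a^k$ for large $k$ if necessary should force $-\chi(S)$ to exceed any bound arising from only $n$ copies of $w^{\pm1}$, yielding the contradiction.

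The main obstacle is precisely the step of converting the free-product setting into an HNN extension with the properties required. The paper's strongest inequality is designed for cases when one factor of $H$ is naturally $\Z$, as in the Kervaire--Laudenbach--Klyachko setting, because the stable letter is already built into the presentation and the complexity bound is controlled by the exponent sum of $t$ in $w$. For arbitrary torsion-free $A$ and $B$ there is no canonical stable letter, and adjoining an auxiliary $t$ either destroys the combinatorial bookkeeping needed for the surface-complexity inequality or produces a version too weak to conclude. Overcoming this — or finding a direct $\scl$ lower bound inside $A\star B$ that bypasses HNN extensions altogether — is where Conjecture~\ref{conj: torsion-free one-relator product} genuinely remains open, and explains why the paper settles only the closely related special cases stated in its abstract.
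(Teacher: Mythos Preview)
The statement you are addressing is labelled as a \emph{conjecture} in the paper, and the paper does not prove it; it is explicitly presented as open, with only the special case $B=\Z$ (Klyachko's theorem) recovered via the paper's methods. Your proposal is therefore not to be compared against any proof in the paper --- there is none --- and you correctly recognise this in your final paragraph.

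Your diagnosis of the obstruction is accurate and matches the paper's own framing. The surface-complexity inequality (Theorem~\ref{thm: HNN general word}) is proved for HNN extensions $H=A\star_C$ and for words $w$ with $p(w)=\pm1$; the reduction to a word of special form (Lemma~\ref{lemma: trick}) and the cost-function argument both rely essentially on the stable letter $t$ and on the $t$-exponent sum being $\pm1$. For a general free product $A\star B$ with both factors torsion-free there is no such stable letter, and the paper offers no mechanism to manufacture one while preserving the inequality. Your idea of adjoining an auxiliary $t$ and pushing the admissible surface forward does not obviously yield a word satisfying $p(w)=\pm1$, and your ``pass to $a^k$ for large $k$'' step is not supported by the paper's inequality, which bounds $-\chi(S)$ by a multiple of $\deg(S)$ (the total degree in $w$), not by any complexity of the $A$-boundary. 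So the sketch, as you yourself note, does not close.

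One minor point: your reduction to cyclically reduced $w$ of syllable length $\ge 2$ implicitly assumes $A$ and $B$ are nontrivial; this is not stated in the conjecture itself but is inherited from the surrounding Question in the paper.
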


A weaker statement contributed by Freedman appears on Kirby's (1970s) problem list \cite[Problem 66]{Kirby:oldlist}. On the topological side, this is related to the cabling conjecture \cite{AcunaShort} about irreducibility of Dehn surgeries on knots in $S^3$, which implies the three-summand conjecture; see e.g. \cite[Page 2]{Howie_cyclic}. 
Actually one expects the free factor $A$ to naturally inject into the quotient unless $w$ conjugates into $A$, which is known as Levin's conjecture \cite{Levin}. 
This is known under the stronger assumption that $A$ and $B$ are locally indicable\footnote{A group is locally indicable if every finitely generated nontrivial subgroup surjects onto $\Z$.} by Brodski\u{\i} \cite{Brodskii} and independently Howie \cite{Howie_LocIndFrei} and Short \cite{Short}. 


The goal of this paper is to bring in new tools from the seemingly unrelated study of stable commutator length to tackle such problems; see \cite{Cal:sclbook} for a comprehensive reference to this topic.
Roughly speaking, we show that the complexity (measured by the negative Euler characteristic) of certain surface maps to a $K(H,1)$ space for some HNN extension $H$ is no less than the complexity of its boundary (measured by a geometric degree); see Theorems \ref{thmA: less tech main} and \ref{thmA: main} in Section \ref{subsec: precise} for precise statements. This is a generalization of the so-called spectral gap phenomenon in stable commutator length; see Remark \ref{rmk: comparison} for a comparison and see \cite{Chen:sclfpgap,IK,CH:sclgap} for the most related results.

Restricting to planar surfaces and $H=A\star \Z$ (i.e. the free HNN extension of $A$) with $A$ torsion-free, our result implies the Klyachko theorem \cite{Klyachko} and gives a new proof.
\begin{theoremA}[Klyachko, Theorem \ref{thm: Klyachko}]\label{thmA: Klyachko}
	For any torsion-free group $A$, the natural map $A\to H/\llangle w\rrangle$ induced by the inclusion $A\to H=A\star\Z$ is injective for any $w\in H$ with $p(w)=\pm 1$, where $p:H\to \Z$ is the standard projection to the $\Z$ factor.
\end{theoremA}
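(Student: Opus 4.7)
The plan is to argue by contradiction using the main complexity theorem for planar surfaces stated earlier. Suppose there exists a nontrivial element $a \in A$ with $a \in \llangle w \rrangle$. Then there is a factorization
\[ a = \prod_{i=1}^{n} g_i w^{\epsilon_i} g_i^{-1} \]
in $H$ for some integer $n \geq 1$, elements $g_i \in H$, and signs $\epsilon_i \in \{\pm 1\}$. This factorization is realized geometrically by a continuous map from a planar surface $S$ --- a sphere with $n+1$ open disks removed --- to a $K(H, 1)$ space, where one distinguished \emph{outer} boundary component of $\partial S$ maps to a loop representing $a$ and each of the $n$ \emph{inner} boundary components maps to a loop representing $w^{\epsilon_i}$. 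The surface $S$ is planar with $-\chi(S) = n - 1$.

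Next I view $H = A \star \Z$ as the free HNN extension of $A$ with trivial associated subgroups and stable letter generating the $\Z$ factor. The projection $p \colon H \to \Z$ then measures geometric degree against a codimension-one subspace of $K(H,1)$ dual to $p$. Under $p$, the outer boundary loop representing $a \in A$ has degree $0$, while each inner boundary loop representing $w^{\epsilon_i}$ has degree $\pm 1$ since $p(w) = 1$. Applying Theorem \ref{thmA: main} yields a lower bound on $-\chi(S)$ in terms of the total geometric degree of $\partial S$. Since the inner boundaries contribute total degree $n$ and the outer boundary contributes $0$, this bound reads $-\chi(S) \geq n$, contradicting $-\chi(S) = n - 1$.

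This contradiction forces $a = 1$ in $A$, proving the desired injectivity. The main obstacle lies in arranging the setup so that Theorem \ref{thmA: main} applies cleanly: one must check that the combinatorial surface built from the factorization $\prod g_i w^{\epsilon_i} g_i^{-1}$ is admissible in the technical sense of the theorem, in particular that it can be put in transverse position with respect to the codimension-one subspace dual to $p$, and that the torsion-freeness of $A$ supplies exactly the hypothesis needed to invoke the complexity inequality. The restriction to planar surfaces is essential here, since a surface of positive genus $g$ would satisfy $-\chi(S) = n + 2g - 1 \geq n$ and no contradiction would arise; it is precisely the planarity, which is automatically forced by expressing $a$ as a product of conjugates of a single relator, that makes the argument close.
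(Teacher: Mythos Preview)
Your overall strategy matches the paper's exactly: build a planar $w$-admissible surface from an expression $a=\prod g_i w^{\epsilon_i}g_i^{-1}$, note $-\chi(S)=n-1$ and $\deg(S)=n$, and derive a contradiction from the complexity bound. However, there are two concrete hypotheses of Theorem~\ref{thmA: main} that you do not verify, and neither is the ``transverse position'' issue you flag at the end.

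First, Theorem~\ref{thmA: main} applies only to \emph{boundary-incompressible} $w$-admissible surfaces. The surface coming from an arbitrary expression of $a$ as a product of conjugates of $w^{\pm1}$ need not be boundary-incompressible: adjacent factors $g_iw g_i^{-1}$ and $g_{i+1}w^{-1}g_{i+1}^{-1}$ may partially cancel. The paper fixes this (see Example~\ref{example: adm surf from relations} and the proof of Theorem~\ref{thm: HNN Freiheitssatz}) by choosing an expression with $n$ \emph{minimal}; minimality then forces boundary-incompressibility, since a compression would produce a shorter expression. You need this step, or the inequality $-\chi(S)\ge\deg(S)$ is simply unavailable.

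Second, Theorem~\ref{thmA: main} explicitly excludes the case where $w$ is conjugate to $at^{\pm1}$ for some $a\in A$. You must treat this case separately. It is easy---if $w=at$ then $t\mapsto a^{-1}$ extends to a retraction $H\to A$ killing $w$, so $A$ embeds in $H/\llangle w\rrangle$---but it cannot be skipped. The paper handles it in one line at the end of the proof of Theorem~\ref{thm: Klyachko}.

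Finally, your remark that torsion-freeness ``supplies exactly the hypothesis needed'' is correct but should be made precise: since $C=\{1\}$ in the free HNN extension, the $n$-RF condition on $(A,\{1\})$ reduces to every nontrivial element of $A$ having order at least $n$, so torsion-freeness gives $n=\infty$ and hence the sharp bound $-\chi(S)\ge\deg(S)$.
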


The interest in Klyachko's theorem and its (new) proofs is twofold. On the one hand, Theorem \ref{thmA: Klyachko} implies the special case of Conjecture \ref{conj: torsion-free one-relator product} where $B=\Z$, since $H/\llangle w\rrangle$ is clearly nontrivial when $p(w)\neq\pm 1$ as it has a map onto $\Z/p(w)\Z$ induced by $H\stackrel{p}{\to}\Z\to\Z/p(w)\Z$.
On the other hand, it is one of the most important progress on the Kervaire--Laudenbach Conjecture \ref{conj: KL} and the Kervaire Conjecture \ref{conj: Kervaire} below. These conjectures originate from Kervaire's classification of high-dimensional knot groups \cite{Kervaire} and remain open in general.
Although Klyachko's theorem has been known for three decades, no significant breakthrough beyond this has been made towards the more general Conjectures \ref{conj: torsion-free one-relator product}, \ref{conj: KL}, and \ref{conj: Kervaire}. It is our hope that new approaches can lead to further progress.
\begin{conjecture}[Kervaire--Laudenbach]\label{conj: KL}
	For any $H=A\star\Z$, the natural map $A\to H/\llangle w\rrangle$ induced by the inclusion $A\to H$ is injective for any $w\in H$ with $p(w)\neq0$.
\end{conjecture}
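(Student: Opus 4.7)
The plan is to deploy the surface-complexity framework of Theorem \ref{thmA: main} on the HNN extension $H=A\star\Z$. Nontriviality of $a\in A$ in $H/\llangle w\rrangle$ is equivalent to the non-existence of a compact planar surface $S$ together with a map $f\colon S\to K(H,1)$ whose outer boundary wraps $a$ once and whose remaining boundaries each wrap $w^{\pm 1}$. So the aim is to prove a lower bound on $-\chi(S)$ that is incompatible with planarity whenever $a\neq 1_A$.

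First, for $A$ torsion-free this is essentially the setup of the new proof of Klyachko's theorem outlined in the paper's abstract, where Theorem \ref{thmA: main} is specialized to planar surfaces in a free HNN extension. An alternative covering reduction is available: setting $n=|p(w)|$, one may pass to the index-$n$ subgroup $H_n\defeq p^{-1}(n\Z)\cong A^{\star n}\star\Z$ (by Bass--Serre theory on the $n$-fold cyclic cover of $K(A,1)\vee S^1$), in which the new projection satisfies $p_n(w)=1$. The catch is that $\llangle w\rrangle_H\cap H_n$ is generated as an $H_n$-normal subgroup by the $n$ coset translates $t^kwt^{-k}$, so a one-relator theorem in $H_n$ is insufficient and one needs the multi-relator complexity bound afforded by Theorem \ref{thmA: main}.

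Second, for general $A$ with possibly torsion, the covering route fails at the outset because $A^{\star n}$ inherits any torsion of $A$. One must therefore apply Theorem \ref{thmA: main} directly to $H$, using only that $H$ is an HNN extension of $A$ with trivial edge group. The crucial geometric input is that, since $p(w)\neq 0$, every $w$-boundary of $S$ traverses the HNN edge with nonzero net winding, while the $a$-boundary, lying entirely in $A$, contributes no winding. This imbalance should yield a strictly positive lower bound on $-\chi(S)$, provided the complexity inequality is strong enough to absorb the defect caused by short $A$-pieces that cancel $a$ against its conjugates via torsion relations inside $A$.

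The principal obstacle is precisely that defect control. In scl language one needs a strict spectral gap for chains in $H$ supported on $\{a,w\}$ with no torsion-freeness or local indicability hypothesis on $A$, which is beyond all currently known techniques (Klyachko's, Brodski\u{\i}--Howie). A reasonable first step is to extend the paper's proper-power result (valid for arbitrary $A$ when the base element has $p$-image $1$) to arbitrary $w$ with $p(w)\neq 0$, perhaps by induction on a word-length-type complexity of $w$ combined with covering arguments that replace $w$ by higher powers in suitable subgroups. Pushing through such an extension while keeping the torsion defect controlled would go well beyond the techniques of the present paper and would constitute genuine progress on Kervaire--Laudenbach.
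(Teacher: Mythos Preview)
The statement you are attempting to prove is Conjecture~\ref{conj: KL}, which the paper explicitly lists as an \emph{open} conjecture; there is no proof of it in the paper to compare against. Your own proposal recognizes this in its final paragraph, where you conclude that the needed gap estimate ``is beyond all currently known techniques'' and that completing the argument ``would go well beyond the techniques of the present paper.'' In other words, what you have written is not a proof but a sketch of why the problem is hard, and on that point you and the paper agree.

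A few of the intermediate steps deserve correction even as heuristics. First, Theorem~\ref{thmA: main} requires $p(w)=\pm 1$, not merely $p(w)\neq 0$; this hypothesis is used essentially (via Lemma~\ref{lemma: half degree} and Lemma~\ref{lemma: trick}) and cannot simply be dropped. Second, your covering reduction to $H_n\cong A^{\star n}\star\Z$ correctly identifies $\llangle w\rrangle_H$ as the $H_n$-normal closure of the $n$ translates $t^k w t^{-k}$, but then you invoke a ``multi-relator complexity bound afforded by Theorem~\ref{thmA: main}.'' No such bound is afforded: the theorem concerns $w$-admissible surfaces for a \emph{single} word $w$, and the LP-duality machinery of Sections~\ref{sec: duality}--\ref{sec: lower bound} is built around one fixed cyclically reduced word. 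Extending it to a finite family of relators with controlled interaction is itself a nontrivial open direction. Third, the direct route for general $A$ runs into exactly the obstacle you name: without torsion-freeness the $n$-RF hypothesis of Theorem~\ref{thmA: main} can only be taken with $n=2$, yielding $-\chi(S)\ge\tfrac12\deg(S)$, which is too weak to exclude the planar surface coming from a putative relation $a=\prod h_i w^{\pm1}h_i^{-1}$ unless one has the extra factor from a proper power as in Theorem~\ref{thmA: proper power}.
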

\begin{conjecture}[Kervaire]\label{conj: Kervaire}
	$H/\llangle w \rrangle$ is nontrivial for all $w\in H=A\star\Z$ if $A$ is nontrivial.
\end{conjecture}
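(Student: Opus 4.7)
Since Conjecture \ref{conj: Kervaire} is an open problem and the paper only establishes partial results toward it, I can only sketch a plan consistent with the paper's framework rather than a complete proof.

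The plan is first to dispose of the easy cases and then to try to adapt the surface-complexity machinery to groups with torsion. I would begin by reducing to $p(w) \neq 0$: if $p(w) = 0$, then $p$ descends to a surjection $H/\llangle w \rrangle \twoheadrightarrow \Z$, so the quotient is automatically nontrivial. So henceforth assume $p(w) \neq 0$.

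Next, I would split off the proper-power subcase. If $w = v^k$ for some $k \geq 2$ with $p(v) = \pm 1$, the proper-power theorem announced in the abstract gives that $A$ embeds in $H/\llangle w \rrangle$, whence nontriviality of the quotient follows from nontriviality of $A$. This handles the cleanest subcase where $w$ is a proper power and the inner root has unit projection; the case where $|p(v)|\ge 2$ does not reduce to the above by any obvious change of variables, since extracting a root of $v$ inside $H$ is not generally possible.

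The main step would then be to upgrade the surface-complexity inequality behind Theorem \ref{thmA: Klyachko} from torsion-free $A$ to arbitrary nontrivial $A$. Concretely, I would model $H/\llangle w \rrangle$ by attaching a $2$-cell along $w$ to a $K(H,1)$, map an admissible surface with boundary representing $w$ into this complex, and seek a lower bound on the negative Euler characteristic by the boundary's geometric degree; once such a bound is in place, a filling argument should force enough of $A$ to survive in $H/\llangle w \rrangle$ to certify nontriviality. The hard part will be that torsion elements of $A$ allow boundary loops to cap off through orbifold cone points, artificially lowering surface complexity and breaking the inequality. I expect the necessary fix to involve an orbifold-corrected Euler characteristic, or else a passage to a torsion-free subgroup or cover compatible with the imposed relator $w$; controlling such a correction for arbitrary torsion in $A$ and arbitrary $w$ is precisely the technical obstacle that separates the Kervaire conjecture from its torsion-free analogue resolved by Klyachko, and is the reason the conjecture has remained open.
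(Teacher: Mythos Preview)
You are right that the paper states this as an open conjecture and proves nothing like it in full; so there is no proof in the paper to compare against, and your proposal is appropriately a plan rather than an argument.

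That said, your reduction is incomplete in a way that distorts the rest of the plan. The abelianization argument does more than you use it for: if $|p(w)|\ge 2$ then $p$ descends to a surjection $H/\llangle w\rrangle\twoheadrightarrow \Z/|p(w)|\Z$, which is already nontrivial. So one reduces not merely to $p(w)\neq 0$ but all the way to $p(w)=\pm 1$. Once that reduction is made, your proper-power subcase becomes vacuous: if $w=v^k$ with $k\ge 2$ then $p(w)=k\cdot p(v)$ has absolute value at least $2$, contradicting $p(w)=\pm 1$. In particular the sentence about ``the case where $|p(v)|\ge 2$'' not reducing by any obvious change of variables is off the mark---that case is disposed of immediately by abelianization, and Theorem~\ref{thmA: proper power} is simply not relevant to the remaining hard case of the Kervaire conjecture.

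What survives after the correct reduction is exactly the problem you identify at the end: for $p(w)=\pm 1$ and $A$ with torsion, one would need to push the surface-complexity inequality of Theorem~\ref{thmA: less tech main} from the constant $1-1/n$ (where $n$ bounds the torsion from below) up to $1$. Your diagnosis that torsion in $A$ is the essential obstruction, and that some orbifold or cover correction would be required, is consistent with the paper's framework and with why the conjecture remains open.
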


Another influential progress on Conjectures \ref{conj: KL} and \ref{conj: Kervaire} is the theorem of Gerstenhaber--Rothaus \cite{GerstenhaberRothaus}, proving the conjecture for any $A$ finite (and consequently any $A$ residually finite). It also generalizes to hyperlinear groups as observed by Pestov \cite[Corollary 10.4]{Pestov}. A more extensive summary of known results on these conjectures and their generalizations can be found in the survey \cite{surveyEqnoverGrps} from the view of equations over groups.

As an application of our method to nontrivial HNN extensions, we establish the following new Freiheitssatz theorem for HNN extensions over malnormal subgroups. Recall that a subgroup $H\le G$ is malnormal if $gHg^{-1}\cap H=\{id\}$ for all $g\notin H$.
\begin{theoremA}[Theorem \ref{thm: proper power HNN}]\label{thmA: malnormal HNN}
	Let $H=A\star_{C}$ be an HNN extension associated to isomorphic malnormal subgroups $C_1,C_2\le A$. Then for any $w\in H$ with $p(w)=\pm 1$ and not conjugate to $at^{\pm1}$, the natural map $A\to H/\llangle w^m\rrangle$ is injective for any $m\ge2$.
\end{theoremA}

Actually, we can weaken the malnormality assumption in Theorem \ref{thmA: malnormal HNN} to  the assumption that $aC_i a^{-1}\cap C_i=\{id\}$ for any letter $a$ appearing in a cyclically reduced expression of $w$, for $i=1,2$. A similar strengthening holds for Theorem \ref{thmA: main} below. See Remark \ref{rmk: strengthening} for more details.

Applying Theorem \ref{thmA: malnormal HNN} to the free HNN extension $H=A\star \Z$, this recovers the following theorem of Klyachko--Lurye \cite{KlyachkoLurye}, which works for an \emph{arbitrary} free factor $A$ but assumes the relator to be a proper power compared to Theorem \ref{thmA: Klyachko}. Note that very few partial results about Conjecture \ref{conj: KL} works for an arbitrary free factor $A$.
\begin{theoremA}[Klyachko--Lurye, Theorem \ref{thm: proper power}]\label{thmA: proper power}
	For any group $A$, the natural map $A\to H/\llangle w^m\rrangle$ induced by the inclusion $A\to H=A\star\Z$ is injective for any $w\in H$ with $p(w)=1$ and $m\ge2$.
\end{theoremA}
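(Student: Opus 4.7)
The plan is a proof by contradiction using the surface-complexity machinery announced in the paper. Suppose $a \in A$ is nontrivial in $A$ but becomes trivial in $Q = H/\llangle w^m\rrangle$. Then $a$ lies in the normal closure of $w^m$ inside $H$, so there is an equation $a = \prod_{i=1}^{N} g_i\, w^{m\varepsilon_i}\, g_i^{-1}$ in $H$ with $g_i \in H$ and $\varepsilon_i \in \{\pm 1\}$. This equation has a topological incarnation as a map $\Sigma \to K(H,1)$ from a planar surface $\Sigma$ (a sphere with $N+1$ boundary components) whose distinguished boundary traces the loop $a$ and whose remaining $N$ boundary components trace conjugates of $w^{m\varepsilon_i}$. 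Applying $p$ to the equation and using $p(a)=0$ forces $\sum \varepsilon_i = 0$, so $N$ is even and $-\chi(\Sigma) = N - 1$.

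The next step is to feed $\Sigma$ into the main technical theorem of the paper (Theorem \ref{thmA: main}), applied to the HNN extension $H = A\star\Z$. This theorem bounds $-\chi(\Sigma)$ from below by a geometric-degree quantity on $\partial \Sigma$. Since $p(w) = 1$, each of the $N$ non-distinguished boundary components carries $p$-degree $\pm m$, and for $m \geq 2$ each should contribute an amount of complexity strictly larger than in the Klyachko regime (Theorem \ref{thmA: Klyachko}). Matching the resulting lower bound against $-\chi(\Sigma) = N - 1$ should yield an inequality that can only be satisfied when the distinguished $a$-boundary itself contributes nothing to the bound; interpreting this vanishing through the theorem then forces $a$ to be homotopically trivial in $K(A,1)$, i.e.\ $a = 1_A$, contradicting the assumption.

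The main obstacle is extracting the correct quantitative content of Theorem \ref{thmA: main} applied to a surface with \emph{mixed} boundary — the $N$ ``known'' components of type $w^{\pm m}$ together with the one ``unknown'' component of type $a$ — and ensuring that the factor of $m$ coming from the proper power cannot be absorbed by the planar Euler characteristic of $\Sigma$. A natural route is to analyse how $\Sigma$ interacts with the Bass--Serre tree of $H = A\star\Z$ viewed as an HNN extension: each $w^m$ boundary winds $m$ times across the stable letter, which after cutting $\Sigma$ along the preimage of a vertex space $K(A,1)$ produces at least $m$ essential arcs per boundary component, while the $a$-boundary is disjoint from the cut and only contributes conditions internal to $A$. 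For $m \geq 2$ these $m$-fold features should outstrip the capacity of a planar surface unless the $A$-side condition is itself trivial — i.e.\ unless $a = 1_A$. Carrying this counting argument out rigorously, consistent with the framework of Theorem \ref{thmA: main}, is where the substantive work lies.
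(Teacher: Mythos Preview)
Your setup is correct, but the proposal does not actually close the argument: the final two paragraphs describe the remaining work as an ``obstacle'' and defer it to a Bass--Serre arc-counting that you do not carry out. Worse, the mechanism you sketch for the endgame is off: you suggest the bound will force the $a$-boundary's contribution to vanish and hence $a=1_A$. But the $A$-boundary never contributes to $\deg(S)$ by definition, so there is nothing to ``vanish''; the paper's argument is a straight numerical contradiction, not a tightness analysis.

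The missing ingredients are short and concrete. First, regard $\Sigma$ as a $w$-admissible surface (not a $w^m$-admissible one): each of the $N$ boundary circles tracing $w^{\pm m}$ then has \emph{degree} $m$, so $\deg(\Sigma)=mN$. Second, arrange boundary-incompressibility by taking $N$ minimal among all expressions of $a$ as a product of conjugates of $w^{\pm m}$ (this is Example~\ref{example: adm surf from relations}). Third, invoke Theorem~\ref{thmA: less tech main} (equivalently Corollary~\ref{cor: gap for groups without small torsion}) with $n=2$, which holds for an \emph{arbitrary} group $A$ since $(A,\{1\})$ is trivially $2$-RF; this gives
\[
N-1 \;=\; -\chi(\Sigma)\;\ge\;\tfrac{1}{2}\deg(\Sigma)\;=\;\tfrac{mN}{2}\;\ge\;N,
\]
a contradiction for $m\ge 2$. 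No further Bass--Serre cutting is needed beyond what is already packaged in the cited corollary, and the exceptional case $w$ conjugate to $at^{\pm1}$ is handled there as well (no boundary-incompressible $w$-admissible surfaces exist in that case).
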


This supports the following Conjecture \ref{conj: Freiheitssatz for proper power} about one-relator quotients by proper powers.
\begin{conjecture}[Howie {\cite[Problem 6.2]{Howie_generalize}}]\label{conj: Freiheitssatz for proper power}
	For an arbitrary free product, the natural map $A_\lambda \to (\star_\lambda A_\lambda)/\llangle w^m \rrangle$ induced by the inclusion $A_\lambda\to \star_\lambda A_\lambda$ is injective whenever $m\ge2$ and $w$ is not conjugate to an element of $A_\lambda$.
\end{conjecture}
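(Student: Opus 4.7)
The plan is to leverage Theorem \ref{thmA: proper power} via a reduction to a two-factor free product, and then handle the remaining case by extending the paper's surface machinery. First I would reduce to two factors: given $\star_\lambda A_\lambda$ and the target factor $A_\mu$, set $A = A_\mu$ and $B = \star_{\lambda \neq \mu} A_\lambda$, so that the ambient group is $H = A \star B$. The hypothesis that $w$ is not conjugate into $A_\mu$ in the full free product is equivalent to $w$ not being conjugate into $A$ inside $A \star B$, and an element of $A$ in the kernel of $A \to (\star_\lambda A_\lambda)/\llangle w^m \rrangle$ is automatically in the kernel of $A \to H/\llangle w^m \rrangle$, so this reduction is lossless.

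Next I would attempt a retraction argument. Suppose $B$ admits a surjective homomorphism $\pi : B \to \Z$. Extending by the identity on $A$ produces a surjection $\Phi : A \star B \to A \star \Z$, which descends to a surjection of one-relator quotients. If $\pi$ can be chosen so that the image $\bar w \defeq \Phi(w) \in A \star \Z$ satisfies $p(\bar w) \neq 0$, then after rescaling so that $p(\bar w) = 1$, Theorem \ref{thmA: proper power} yields injectivity of $A \to (A \star \Z)/\llangle \bar w^m \rrangle$, which pulls back to injectivity of $A \to H/\llangle w^m \rrangle$. Whether $p(\bar w) \neq 0$ can be arranged depends on how the syllables of $w$ lying in $B$ combine in the abelianisation; varying $\pi$ over a family parametrised by the free rank of $H_1(B)$ should ensure at least one choice detects $w$ nontrivially.

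The principal obstacle is the case when no such $\pi$ exists, most starkly when $B$ is perfect. Then no retraction to $A \star \Z$ is available, and one must extend the surface-complexity framework of Theorems \ref{thmA: less tech main} and \ref{thmA: main} from HNN extensions to the amalgamated free product $A \star B$. I would view $A \star B$ as the fundamental group of a graph of groups with two vertices and one edge carrying trivial edge group, and declare a map from a surface $S$ into $K(A \star B, 1)$ admissible when it is transverse to the edge space and its boundary consists of copies of $w^m$ together with a prescribed element $a \in A$. The goal would be a spectral gap estimate: the negative Euler characteristic of $S$ is bounded below by a positive multiple of the number of $w^m$ boundary components unless $a$ is trivial. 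The turnover decomposition and combinatorial analysis of the paper should carry over, with disk pieces labelled by alternating $A$- and $B$-syllables in place of HNN normal-form strips. I expect this transcription to be the principal technical difficulty, because each syllable now draws from a potentially complicated group rather than from $\Z$, so the spectral-gap inequalities must be made uniform in the syllable data.
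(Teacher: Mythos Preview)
The statement you are attempting to prove is Conjecture~\ref{conj: Freiheitssatz for proper power}, which the paper explicitly presents as an \emph{open} conjecture. The paper does not prove it; Theorem~\ref{thmA: proper power} is offered only as a partial result (the case $B=\Z$ with $p(w)=\pm1$), and the author remarks that the extension of the surface machinery to general free products is left to future work. So there is no paper proof to compare against.

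Your proposal, taken on its own terms, has a genuine gap already at the retraction step. Theorem~\ref{thmA: proper power} requires $p(\bar w)=\pm1$, not merely $p(\bar w)\neq0$; there is no ``rescaling'' that turns an element of $t$-exponent sum $d$ into one of exponent sum $1$. Varying the surjection $\pi:B\to\Z$ does not help in general: already when $B=\Z$ and $w=at^2$ with $a\in A$ nontrivial, every choice of $\pi$ gives $p(\bar w)=\pm2$, so Theorem~\ref{thmA: proper power} is inapplicable, and this instance (arbitrary $A$, exponent sum $\neq\pm1$) is precisely the part of the Kervaire--Laudenbach-type problem that remains open even for proper powers. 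A second issue is that $\bar w=\Phi(w)$ may become conjugate into $A$ (for example if all $B$-syllables of $w$ lie in $\ker\pi$), collapsing the hypothesis needed downstream.

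Your third step is honest about being a sketch, but it is where the entire difficulty lies. The paper's estimates in Theorem~\ref{thm: HNN main} rest on the very specific shape $a_1t^{-1}b_1t\cdots xt$ and on the reduction in Lemma~\ref{lemma: trick}, both of which use $p(w)=\pm1$ in an essential way; there is no analogous normal form for a general word in $A\star B$, and no substitute for the $t$-exponent bookkeeping that drives Lemma~\ref{lemma: half degree} and the cost-function cancellations. Carrying this out would amount to proving the conjecture outright.
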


Klyachko--Lurye \cite{KlyachkoLurye} also proved that $(A\star\Z)/\llangle w^m\rrangle$ is hyperbolic relative to the subgroup $A$ if we further assume $m\ge3$ or $A$ has no $2$-torsion in Theorem \ref{thmA: proper power}. Our method implies a linear isoperimetric inequality recovering this result; see Theorem \ref{thm: linear isoperimetric}.




\subsection{More detailed statements of results}\label{subsec: precise}
Now we give the more precise statements on our results about the complexity of surfaces and their relation to the problems above. 

Fix a $K(H,1)$ space $X$ for an HNN extension $H=A\star_{C}$ and an element $w\in H$ not conjugate into $A$. The objects of our study are \emph{$w$-admissible surfaces}, each of which is a continuous map $f:S\to X$ from a compact oriented surface $S$ so that the image of each boundary component $B_i$ of $S$ represents either the conjugacy class of $w^{n_i}$ for some $n_i\neq0\in\Z$ or some conjugacy class in $A$ (for which we take $n_i=0$).
The (geometric) \emph{degree} of $S$, denoted $\deg(S)$, is defined as the sum of $|n_i|$ over all boundary components $B_i$ of $S$. See Definition \ref{def: admissible} for a more general definition.

We actually focus on \emph{boundary-incompressible} $w$-admissible surfaces $S$, which essentially means that boundary components of $S$ representing $w^m$ and $w^{-n}$ cannot cancel in a naive way for $m,n\in\Z_+$; see Definition \ref{def: bdry incompressible} for the precise definition. Any $w$-admissible surface can be simplified to a boundary-incompressible one.

A less technical version of our main result is:
\begin{theoremA}[Corollary \ref{cor: gap for groups without small torsion}]\label{thmA: less tech main}
	For the free HNN extension $H=A\star\Z$ of $A$, where each nontrivial element of $A$ has order at least $n$ for some $2\le n\le\infty$ (which is automatic if $n=2$), for any $w\in H$ with $p(w)=1$, every boundary-incompressible $w$-admissible surface $S$ has
	$$-\chi(S)\ge (1-\frac{1}{n})\deg(S).$$
\end{theoremA}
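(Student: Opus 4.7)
The plan is to deduce this bound as a specialization of the paper's main theorem (Theorem \ref{thmA: main}), by verifying that the torsion hypothesis on $A$ controls the combinatorial invariant that appears there. The overall structure follows the now-standard scl recipe: normalize the surface, count $-\chi$ via a piece decomposition, and translate the counting into a group-theoretic constraint.

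First I would put the surface in combinatorial normal form. Fix a $K(H,1)$ model $X$ reflecting the free HNN splitting $H=A\star\Z$: wedge a circle $t$ (dual to $p$) onto a $K(A,1)$ space for $A$. Given a boundary-incompressible $w$-admissible map $f\col S\to X$, homotope $f$ to be transverse to the midpoint of $t$, so that $\Gamma\defeq f^{-1}(\text{midpoint})$ is a disjoint union of arcs and simple closed curves in $S$. Using boundary-incompressibility together with a tightening argument, one eliminates inessential components and arranges that every connected component of $S\setminus\Gamma$ is a topological disk whose restriction of $f$ lands in the $K(A,1)$ piece; the boundary of each such disk reads a cyclic word in $A$ that must be trivial in $A$. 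Because $p(w)=1$, the arcs of $\Gamma$ meet $\partial S$ in exactly $\deg(S)$ endpoints, with consistent orientation around each boundary component mapping to a power of $w$.

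Next I would invoke Theorem \ref{thmA: main}, which I expect to output an inequality of the shape
\[
-\chi(S)\ \ge\ \deg(S)\left(1-\tfrac{1}{\ell_{\min}}\right),
\]
where $\ell_{\min}$ is the smallest possible cyclic boundary length of a disk piece occurring across all admissible normal forms of $w$-admissible surfaces. Intuitively, one Euler-characteristic accounting $-\chi(S)=\#\text{arcs}-\#\text{disks}-\tfrac12(\text{boundary arcs on }\partial S)+\cdots$ on the dual polygonal decomposition trades each disk piece of boundary length $\ell$ against an $\ell$-amount of adjacent degree, so that a piece of length $\ell$ costs exactly $1/\ell$ of the ideal ratio $-\chi(S)/\deg(S)=1$.

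The final and key step is to prove $\ell_{\min}\ge n$ from the torsion hypothesis. A disk piece with boundary length $\ell$ produces a nontrivial identity $a_1a_2\cdots a_\ell=1$ in $A$, where each $a_i$ is a syllable of (a cyclic conjugate of) $w^{\pm1}$; by boundary-incompressibility and the normal form for $A\star\Z$, none of these syllables is trivial and no two adjacent ones cancel. Hence this identity is a genuine length-$\ell$ relation in $A$, which forces the existence of a torsion element of $A$ of order at most $\ell$. Our assumption that every nontrivial element of $A$ has order at least $n$ therefore yields $\ell\ge n$, and the theorem follows. The main obstacle is the last implication: one must verify carefully that the short relation produced on $\partial$(disk piece) cannot be trivialized by any further combinatorial move the normal form has not already performed, so that it genuinely records a short-order torsion element of $A$ rather than an artifact that boundary-incompressibility should have removed. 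This is where the syllable structure of $w$ and the tightness of the normal form in $A\star\Z$ have to be used with care.
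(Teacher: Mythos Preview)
Your proposal has two genuine gaps. First, you have misidentified what Theorem~\ref{thmA: main} actually says and what needs to be checked. Its hypothesis is not a minimum-length condition on disk pieces; it is the $n$-RF condition (Definition~\ref{def: n-RF}) on the group--subgroup pairs $(A,C_i)$, and its conclusion already gives the bound $-\chi(S)\ge(1-\tfrac{1}{n})\deg(S)$ directly. For the free HNN extension $H=A\star\Z$ the edge group is trivial, so the correct deduction is simply to verify that $(A,\{id\})$ is $n$-RF. Unwinding Definition~\ref{def: n-RF} with $C=\{id\}$, condition~(1) forces all exponents $e_i$ to have the same sign and condition~(2) then gives $k<n$, so the requirement collapses to ``$a^k\neq id$ for $1\le k<n$'', i.e.\ every nontrivial $a\in A$ has order at least $n$. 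That is exactly your hypothesis, and the result follows (with the exceptional case $w\sim at^{\pm1}$ handled separately, as there are then no admissible turns at all). This is the paper's argument in Corollary~\ref{cor: gap for groups without small torsion}.

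Second, and more seriously, your proposed ``key step'' is false as stated. A relation $a_1a_2\cdots a_\ell=1$ in $A$, even with nontrivial and pairwise non-cancelling syllables, does \emph{not} force $A$ to contain torsion of order at most $\ell$: take $A=\Z^2$ and $xyx^{-1}y^{-1}=1$. The syllables on the boundary of a disk piece are the various letters of $w$, not repeated occurrences of a single element, so no torsion conclusion is available. Relatedly, the Euler-characteristic accounting you sketch does not produce the inequality $-\chi(S)\ge\deg(S)(1-1/\ell_{\min})$: by Lemma~\ref{lemma: Euler} one has $-\chi(S)=\tfrac{1}{2}\deg(S)\,|w|-v_d$, and a crude bound $v_d\le(\text{total turns})/\ell_{\min}=\deg(S)\,|w|/\ell_{\min}$ only yields a bound depending on $|w|$. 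The paper obtains the correct $|w|$-independent bound not by bounding $\ell_{\min}$ but via a carefully designed cost function and LP duality (Section~\ref{sec: duality} and the proof of Theorem~\ref{thm: HNN main}); the $n$-RF condition enters there in specific case analyses (Lemmas~\ref{lemma: case1}--\ref{lemma: case3}), not as a uniform lower bound on piece length.
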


This implies Theorem \ref{thmA: Klyachko} (and similarly Theorem \ref{thmA: proper power}) for the following reason. If some $a\neq id \in A$ becomes trivial in the quotient $H/\llangle w\rrangle$, then $a$ can be written as a product of conjugates of $w$ and $w^{-1}$ in $H$ (with $h_i\in H$):
$$a=(h_1 w^{\pm 1} h_1^{-1})\cdots (h_k w^{\pm 1} h_k^{-1}).$$
Topologically this is equivalent to a $w$-admissible surface $S$ with $\deg(S)=k$, which is a sphere with $k+1$ disks removed and hence has $-\chi(S)=k-1<\deg(S)$. The inequality in Theorem \ref{thmA: less tech main} (with $n=\infty$ as $A$ is torsion-free) rules out the existence of such surfaces provided that we simplify the equation above to ensure boundary-incompressibility of the surface $S$.

The more general main result works for HNN extensions of a group $A$ over isomorphic subgroups $C_1,C_2$, assuming that the group-subgroup pair $(A,C_i)$ satisfies a \emph{length-$n$ relatively free ($n$-RF)} condition, which essentially assumes that there is no short relation (quantified by $n$) between any $a\in A\setminus C_i$ and $C_i$, $i=1,2$; see Definition \ref{def: n-RF} for the precise definition and Section \ref{subsec: n-RF} for a more detailed discussion on this condition. 
\begin{theoremA}[Theorem \ref{thm: HNN general word}]\label{thmA: main}
	Let $H=A\star_{C}$ be the HNN extension associated to inclusions $C\to A$ with images $C_1,C_2\le A$ such that $(A,C_i)$ is $n$-RF for some $2\le n\le\infty$ and $i=1,2$. Let $p:H\to \Z$ be the projection to $\Z$ that restricts trivially to $A$. Then for any $w\in H$ with $p(w)=\pm 1$ and not conjugate to $at^{\pm1}$ for any $a\in A$, every boundary-incompressible $w$-admissible surface has
	$$-\chi(S)\ge (1-\frac{1}{n})\deg(S).$$
\end{theoremA}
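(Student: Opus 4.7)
The plan is to follow a decomposition-and-Euler-characteristic-count strategy, in the spirit of the spectral gap arguments for stable commutator length in free products. Fix a model $K(H,1)$-space $X$ built from a $K(A,1)$ together with a mapping cylinder on $K(C,1)$, and let $M \subset X$ denote the mid-surface corresponding to $C$, which separates its tubular neighborhood into two sides attached to $K(A,1)$ via the two inclusions $C \hookrightarrow A$ with images $C_1, C_2$. Given a boundary-incompressible $w$-admissible surface $f \colon S \to X$, homotope $f$ to be transverse to $M$, so that $\Gamma := f^{-1}(M)$ is a properly embedded $1$-manifold in $S$ whose endpoints on $\partial S$ are exactly the preimages of the $t^{\pm 1}$ letters occurring in the boundary words $w^{n_i}$. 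The components of $\Gamma$ are \emph{cut arcs} (with endpoints on $\partial S$) and possibly some cut circles.

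First I would clean up $\Gamma$. By standard innermost-disk and boundary-compression surgeries, together with the hypothesis of boundary-incompressibility, I can assume $\Gamma$ has no inessential arcs or circles, and that each component $P$ of $S \setminus \Gamma$, an \emph{$A$-piece}, maps ``tautly'' into $K(A,1)$. Reading around each boundary loop of $P$ produces a cyclically reduced alternating relation $c_1 a_1 c_2 a_2 \cdots c_m a_m = 1$ in $A$, where the $c_j \in C_i$ come from the cut arcs on the appropriate side and the $a_j$ are the $A$-syllables of $w$ in its normal form, which lie in $A \setminus C_i$. The assumption that $w$ is not conjugate to $at^{\pm 1}$ is precisely what ensures these $A$-syllables are genuinely present, so that the alternating structure of the relation is non-degenerate.

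Next I would invoke the $n$-RF hypothesis on $(A, C_i)$: since the displayed relation alternates strictly between $C_i$ and $A \setminus C_i$, the definition of $n$-RF forces $m \ge n$ for each boundary loop of every $A$-piece. I then run the Euler characteristic bookkeeping on the induced cell structure: with $V$ the $t$-crossings on $\partial S$, $E = E_\partial + E_{\text{cut}}$ the union of $w$-arcs on $\partial S$ and components of $\Gamma$, and $F$ the number of $A$-pieces, one gets $\chi(S) = V - E + F$. The counts of $V$, $E_\partial$ and $E_{\text{cut}}$ are determined by $\deg(S)$ and by the length of $w$ in its normal form, and the $n$-RF bound $n \cdot F \le (\text{arcs in }\partial P\text{, summed})/2$ translates into an upper bound on $F$ proportional to $\deg(S)/n$. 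Rearranging $-\chi(S) = E - V - F$ against this bound should deliver exactly the inequality $-\chi(S) \ge (1 - \tfrac{1}{n})\deg(S)$. $A$-pieces of positive genus only increase $-\chi(S)$, and boundary components of $S$ mapped to conjugacy classes in $A$ only add interior pieces or arcs that help rather than hurt.

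The main obstacle, I expect, is twofold. First, arranging the preliminary homotopies and surgeries so that every $A$-piece yields a genuinely cyclically reduced alternating relation in $A$; this is where boundary-incompressibility and the exclusion of $w \sim at^{\pm 1}$ are indispensable, and extra care is required for $A$-pieces whose boundary traverses the same boundary component of $S$ more than once, or whose boundary contains only cut arcs. Second, keeping the combinatorial accounting sharp enough to extract the exact constant $1 - \tfrac{1}{n}$ rather than some weaker gap; this will require tracking carefully how cut arcs pair up the $t$-crossings on $\partial S$ (so that $2E_{\text{cut}}$ really equals $V$), and exploiting that the $n$-RF bound applies boundary-loop-by-boundary-loop on each $A$-piece. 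Once these are in place, the rest is a linear arithmetic of Euler characteristic.
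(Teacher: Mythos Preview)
Your proposal has a genuine gap at the step where you invoke the $n$-RF hypothesis. You write that the polygonal boundary of an $A$-piece gives an alternating relation $c_1 a_1 c_2 a_2 \cdots c_m a_m = 1$ with $c_j \in C_i$ and $a_j \in A \setminus C_i$, and that ``the definition of $n$-RF forces $m \ge n$''. But that is not what $n$-RF says. By Definition~\ref{def: n-RF}, the $n$-RF condition on $(A,C_i)$ only constrains words of the form $a^{e_1}c_1 a^{e_2}c_2\cdots a^{e_k}c_k$ for a \emph{single} element $a\in A\setminus C_i$; it says nothing about alternating products built from \emph{different} syllables $a_1,a_2,\dots$ of $w$. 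For a blunt counterexample, take $A$ free on $x,y$ with $C=\{1\}$: then $(A,C)$ is $\infty$-RF, yet $x\cdot 1\cdot y\cdot 1\cdot (xy)^{-1}\cdot 1=1$ is an alternating relation of length $m=3$. In the surface picture this means disk-pieces can have as few as two arcs (Lemma~\ref{lemma: one turn}) regardless of $n$, so your bound $m\ge n$ on every piece fails and the Euler-characteristic arithmetic that follows it collapses.

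The paper's proof addresses exactly this difficulty. It first applies the algebraic trick of Lemma~\ref{lemma: trick} to pass to a different HNN structure on $H$ (with larger vertex group $A_{k-1}$) in which a conjugate of $w$ has the special shape $a_1 t^{-1} b_1 t\cdots a_m t^{-1} b_m t\, x\, t$, and checks via Proposition~\ref{prop: inheritance} that the $n$-RF condition is inherited. Even then the naive piece-by-piece lower bound does not hold; instead one runs the LP-duality method of Section~\ref{sec: duality}: a carefully chosen cost function on turn types (Section~\ref{sec: lower bound}) is engineered so that $c(P)\ge\chi(P)$ for every piece, and the $n$-RF (and $n$-RTF) hypotheses are invoked only in the very special sub-cases where the polygonal boundary is supported on $a_1^{\pm1}$ or $b_m^{\pm1}$ alone (Lemmas~\ref{lemma: case1} and~\ref{lemma: case2}). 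The cost function, not a uniform bound on piece size, is what produces the sharp constant $1-\tfrac{1}{n}$.
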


Similar to the applications of Theorem \ref{thmA: less tech main}, this implies Freiheitssatz theorems for HNN extensions (e.g. surface groups as HNN extensions over $\Z$) more general than Theorem \ref{thmA: Klyachko}; see Theorem \ref{thm: HNN Freiheitssatz}. Theorem \ref{thmA: malnormal HNN} also follows since the $2$-RF condition is equivalent to malnormality  (Lemma \ref{lemma: 2-RF}).

Theorem \ref{thmA: main} is analogous to the spectral gap theorem proved by the author and Nicolaus Heuer in the context of (relative) stable commutator length (scl) in graphs of groups \cite[Theorem A]{CH:sclgap}. Our $w$-admissible surfaces are analogous to the admissible surfaces in graphs of groups relative to the vertex groups in the scl sense \cite[Definition 2.12]{CH:sclgap}. The key difference is that here we use the \emph{geometric} degree instead of the algebraic degree in the scl context, and this crucial difference makes the problem harder in our context.

One of the key tool used to prove spectral gap properties of scl is to construct suitable quasimorphisms and apply Bavard's duality \cite{Bavard_duality}. Due to the key difference in the notion of degree, it is unclear if this approach is still applicable here.

However, we are still able to adapt in our context the LP-duality method that the author developed to prove sharp lower bounds and spectral gaps of scl; see \cite[Section 6.3]{Chen:sclBS} and \cite[Section 3.2]{CH:sclgap} for an introduction of this method for scl. We focus on the adaptation of this method to $w$-admissible surfaces in HNN extensions, but the same method applies to graphs of groups and in particular free products of groups, which we leave to future work.


The connection between the Kervaire conjecture and scl was not completely clueless.
Ivanov and Klyachko proved spectral gap results in scl (independent to the author's proof \cite{Chen:sclfpgap}) using the car motion method \cite{IK}, which is the method Klyachko originally used to prove Theorem \ref{thmA: Klyachko}. It was suggested to the author by Danny Calegari back then that the connection might go both ways: Some techniques in scl might also apply to the Kervaire conjecture, which we now confirm.

\subsection{Organization of the paper}\label{subsec: Organization}
This paper is organized as follows: We give basic definitions about $w$-admissible surfaces in Section \ref{sec: adm}, and we introduce a normal form of such surfaces in Section \ref{sec: normal form}. Then we explain the (adapted) LP-duality method in Section \ref{sec: duality} and apply it to prove a main technical result (Theorem \ref{thm: HNN main}) in Section \ref{sec: lower bound}. A brief discussion on the main $n$-RF assumption can be found in Section \ref{subsec: n-RF}. Finally in Section \ref{sec: app} we apply Theorem \ref{thm: HNN main} to prove Theorem \ref{thmA: main} (Theorem \ref{thm: HNN general word}), from which we deduce Theorem \ref{thmA: less tech main} (Corollary \ref{cor: gap for groups without small torsion}) as its special case as well as Theorems \ref{thmA: Klyachko}, \ref{thmA: malnormal HNN} and \ref{thmA: proper power} (Theorems \ref{thm: Klyachko}, \ref{thm: proper power HNN} and \ref{thm: proper power}). Some unsolved related questions are listed in Section \ref{sec: questions}.

\subsection*{Acknowledgment}
The author deeply thanks Danny Calegari for suggesting the potential connection between stable commutator length and the Kervaire conjecture back in 2017. The author is very grateful to Cameron Gordon for re-stimulating the author's interest in such problems and for numerous discussions on this topic, from which a gap in an earlier proof was filled. The author is also grateful to Anton Klyachko for helpful discussions and pointing out that Theorem \ref{thmA: proper power} is known. The author also thanks Daniel Allcock, Jeff Danciger, Francesco Fournier-Facio, Yash Lodha, John Luecke, Jason Manning, Bestvina Mladen, Yi Ni and Henry Wilton for helpful conversations and suggestions. Finally, the author thanks the anonymous referees for numerous suggestions that helped improving the quality of this paper, especially for the strengthening of Theorem \ref{thmA: malnormal HNN} (Theorem \ref{thm: proper power HNN}) as discussed in Remark \ref{rmk: strengthening}.


\section{Admissible surfaces}\label{sec: adm}
Fix a group $H$ with a proper subgroup $A$, and let $X$ be a connected topological space with $\pi_1(X)=H$. In this section, we introduce \emph{$w$-admissible surfaces} associated to an element $w\in H$ not conjugate into $A$. We are mostly interested in the case of an HNN extension $H=A\star_C$, but the definitions make sense in general.

\begin{definition}[$w$-admissible]\label{def: admissible}
	A map $f:S\to X$ from a compact oriented surface $S$ is \emph{$w$-admissible} if the image under $f$ of each component of $\partial S$ either
	\begin{enumerate}
		\item represents a conjugacy class in $A$, or
		\item represents the conjugacy class of $w^n$ for some $n\in\Z\setminus\{0\}$.
	\end{enumerate}
	We refer to the union of boundary components of the first type as the \emph{$A$-boundary} of $S$, and refer to the union of the second type as the \emph{$w$-boundary} of $S$; see Figure \ref{fig: admsurf}. 
	A $w$-boundary is \emph{positive} (resp. \emph{negative}) if the exponent $n$ is positive (resp. negative).
	We allow the $A$-boundary to be empty but make the convention throughout this paper that the $w$-boundary is nonempty for each component of $S$. In particular, each component of $S$ has non-positive Euler characteristic as $w$ is nontrivial.
	
	Although the map $f$ is part of the data, we often abbreviate and refer to $S$ as a $w$-admissible surface by thinking of it as a (singular) subsurface in $X$. When $w$ is understood, we simply call $S$ an admissible surface.
	
	The \emph{degree} of a $w$-boundary component representing $w^n$ is $|n|$. Define the \emph{degree} $\deg(S)$ of a $w$-admissible surface $S$ to be the sum of degrees of all $w$-boundary components. By our convention we have $\deg(S)\in\Z_+$. This is well defined  if no $w^n$ is conjugate to $w^m$ whenever $m\neq n$. Similarly, we define $\deg_+(S)$ (resp. $\deg_-(S)$) to be the sum of degrees only over $w$-boundary components representing $w^n$ for some $n>0$ (resp. $n<0$). We have $\deg(S)=\deg_+(S)+\deg_-(S)$.
\end{definition}

\begin{figure}
	\labellist
	\small \hair 2pt
	\pinlabel $a_1$ at -12 220
	\pinlabel $a_2$ at -12 100
	\pinlabel $S$ at 150 25
	\pinlabel $w^{-2}$ at 330 55
	\pinlabel $w$ at 310 190
	\pinlabel $w$ at 310 310
	\pinlabel $f$ at 360 180
	\pinlabel $X$ at 500 50
	\pinlabel $X_A$ at 560 220
	\pinlabel $\gamma_w$ at 517 210
	\endlabellist
	\centering
	\includegraphics[scale=0.5]{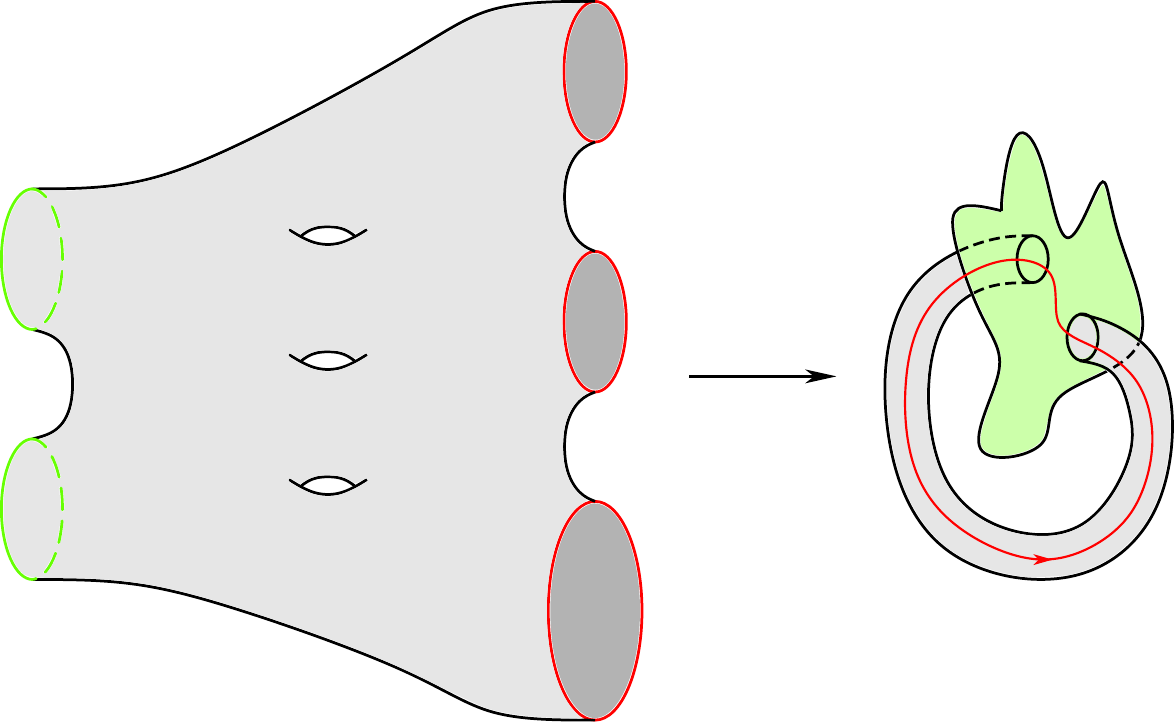}
	\caption{$X$ has a subspace $X_A$ representing the subgroup $A\le H=\pi_1(X)$ and a loop $\gamma_w$ representing some $w\in H$. $S$ is a connected $w$-admissible surface of degree $4$, where the two boundary components of $S$ on the left are $A$-boundary, mapped to conjugacy classes of $a_1,a_2\in A$, and the three on the right are $w$-boundary of $S$, representing powers of $w$.}\label{fig: admsurf}
\end{figure}

\begin{remark}\label{rmk: def}
	One should really refer to these surfaces as $w$-admissible surfaces in $X$ (or $H$) \emph{relative to $A$}. However, in most of this paper, we fix $A$ and $H$ as in an HNN extension $H=A\star_{C}$. The only exception is in the proof of Theorem \ref{thm: HNN general word}, where we pass to a different HNN extension structure on $H$ that enlarges the subgroup $A$.
	Note that a $w$-admissible surface relative to $A$ is also a $w$-admissible surface relative to $A'$ if $A\le A'$ (and $w$ is not conjugate into $A'$).
\end{remark}

There is a similar notion of admissible surfaces in the topological definition of stable commutator length \cite[Notation 2.5]{Cal:rational}; see also \cite[Definition 2.8]{Chen:sclBS}. However, in that context a boundary component representing $w^n$ for $n<0$ is defined to have negative degree or is simply disallowed by considering the so-called monotone admissible surfaces \cite[Lemma 2.7]{Cal:rational}. In our setting we consider the geometric degree instead of the algebraic degree aiming for the Kervaire conjecture.

For an HNN extension $H=A\star_{C}$, there is a surjective homomorphism $p: H\to \Z$ that vanishes on $A$, taking the standard new generator $t$ to a generator of $\Z$ (see the presentation in (\ref{eqn: std presentation}) below).
\begin{lemma}\label{lemma: half degree}
    If $H=A\star_{C}$ is an HNN extension and $p(w)\neq0$, then for any $w$-admissible surface $S$, we have
    $$\deg_+(S)=\deg_-(S)=\frac{1}{2}\deg(S).$$
\end{lemma}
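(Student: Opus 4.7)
The plan is to exploit the projection $p \colon H \to \Z$ together with the fact that $\partial S$ bounds in $S$, so the sum of homology classes of the boundary components (with their induced orientations) vanishes after pushing forward to $H_1(X;\Z)$.

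More concretely, I would let $B_1, \dots, B_k$ denote the boundary components of $S$ and write each $B_i$ as a free homotopy class representing either some $a_i \in A$ (if $B_i$ is an $A$-boundary) or $w^{n_i}$ with $n_i \neq 0$ (if $B_i$ is a $w$-boundary). The induced map on first homology
\[
f_* \colon H_1(S;\Z) \to H_1(X;\Z)
\]
sends $[\partial S] = \sum_i [B_i]$ to $0$, because $[\partial S]$ already vanishes in $H_1(S;\Z)$ (boundary of the fundamental 2-chain). Composing with the map $H_1(X;\Z) \to \Z$ induced by $p$, and using that $p(a_i) = 0$ for every $A$-boundary component while $p(w^{n_i}) = n_i \cdot p(w)$ for every $w$-boundary component, we obtain
\[
p(w) \cdot \sum_{B_i \text{ is a $w$-boundary}} n_i \;=\; 0.
\]

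Since $p(w) \neq 0$ by hypothesis, this forces $\sum_i n_i = 0$, where the sum ranges over $w$-boundary components. Separating the positive and negative contributions gives
\[
\sum_{n_i > 0} n_i \;=\; \sum_{n_i < 0} |n_i|,
\]
which by definition is exactly $\deg_+(S) = \deg_-(S)$. Combined with $\deg(S) = \deg_+(S) + \deg_-(S)$ the claimed equality follows.

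There is essentially no obstacle here; the only subtlety worth flagging is orientation bookkeeping, namely making sure that each $w$-boundary component contributes $n_i$ (and not $|n_i|$) to the homology sum, so that positive and negative powers of $w$ cancel rather than add. This is immediate from the convention that $S$ is oriented and the $w$-boundary representing $w^{n_i}$ is taken with the orientation inherited from $S$.
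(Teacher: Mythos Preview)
Your proof is correct and follows essentially the same approach as the paper: both use that $[\partial S]$ vanishes in $H_1(X;\Z)$, apply the projection $p$ (which kills the $A$-boundary contributions), and cancel the nonzero factor $p(w)$ to conclude $\deg_+(S)=\deg_-(S)$. The paper's write-up is slightly terser, summarizing your sum $\sum_i n_i\, p(w)=0$ directly as $\deg_+(S)\cdot p(w)+\deg_-(S)\cdot p(w^{-1})=0$, but the content is identical.
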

\begin{proof}
    The homomorphism $p$ factors through the abelianization $H_1(H;\Z)=H_1(X;\Z)$. Note that $[\partial S]$ is a trivial first homology class, and $p$ vanishes on $A$, so we must have 
    $$\deg_+(S)\cdot p(w)+\deg_-(S)\cdot p(w^{-1})=0.$$
    As $p(w^{-1})=-p(w)\neq0$, it follows that $\deg_+(S)=\deg_-(S)$, which is half of the total degree $\deg(S)$.
\end{proof}

\begin{definition}[Boundary incompressibility]\label{def: bdry incompressible}
	A $w$-admissible surface $S$ is \emph{boundary-compressible} if there is a compact subsurface $\Sigma\subset S$ which is a pair of pants so that two components of $\partial \Sigma$ are $w$-boundary components of $S$ representing the conjugacy classes $w^n$ and $w^{-m}$ for some $m,n\in\Z_+$ and the third component of $\partial \Sigma$ is a loop in $S$ (with the orientation induced from $\Sigma$) representing the conjugacy class of $w^{m-n}$ (under the map $f:S\to X$); see Figure \ref{fig: bdrycompressible}.
	
	A $w$-admissible surface $S$ is \emph{boundary-incompressible} if it is not boundary-compressible. In particular, given any base point $p$ in a boundary-incompressible surface, for any two $w$-boundary components, their images in $\pi_1(X,p)$ cannot be expressed as $hw^nh^{-1}$ and $hw^{-m}h^{-1}$ for some $h\in \pi_1(X,p)$ and $m,n\in\Z_+$.
\end{definition}

\begin{figure}
	\labellist
	\small \hair 2pt
	\pinlabel $a_1$ at -12 220
	\pinlabel $a_2$ at -12 100
	\pinlabel $S$ at 150 25
	\pinlabel $\Sigma$ at 240 75
	\pinlabel $w^{-m}$ at 330 55
	\pinlabel $w^{m-n}$ at 200 150
	\pinlabel $w^n$ at 315 190
	\pinlabel $w^k$ at 315 310
	
	\pinlabel $a_1$ at 440 220
	\pinlabel $a_2$ at 440 100
	\pinlabel $S'$ at 600 25
	\pinlabel $w^{n-m}$ at 722 130
	\pinlabel $w^k$ at 765 310
	\endlabellist
	\centering
	\includegraphics[scale=0.5]{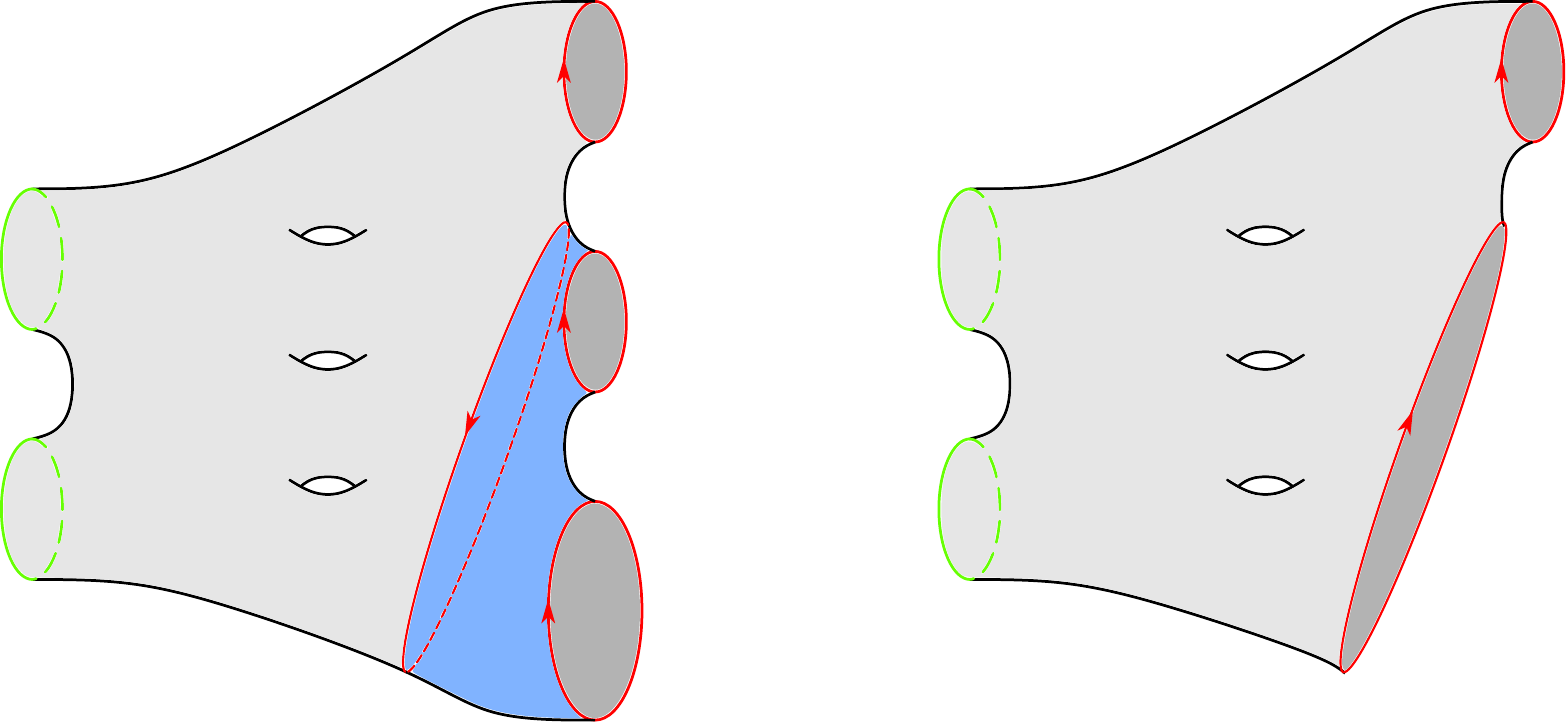}
	\caption{A boundary-compressible $w$-admissible surface $S$ with $k,m,n\in\Z_+$ and the simplified $w$-admissible surface $S'=S\setminus \Sigma$, whose boundary representing $w^{n-m}$ (with the orientation induced from $S'$) needs to be further capped off by a disk if $m=n$}\label{fig: bdrycompressible}
\end{figure}

One can keep simplifying a boundary-compressible $w$-admissible surface until it either has no $w$-boundary left or becomes boundary-incompressible. Indeed, for a pair of pants $\Sigma\subset S$ as in the definition, consider a new surface $S'=S\setminus \Sigma$, where we further cap off the new boundary representing $w^{n-m}$ if $n=m$. The new surface $S'$ has $-\chi(S')\le -\chi(S)-1$ and $\deg(S')=\deg(S)-2\min(m,n)\le\deg(S)-2$; see Figure \ref{fig: bdrycompressible}.


The following example shows how $w$-admissible surfaces naturally correspond to certain kinds of equations in the group $H$. Such equations arise naturally in our application to the Kervaire--Laudenbach conjecture.

\begin{example}\label{example: adm surf from relations}
	Suppose $a\in A\le H$ lies in the normal closure $\llangle w\rrangle$ of $w$, i.e. there is an equation in $H$ of the form 
	\begin{equation}\label{eqn: relation}
	    a=(h_1 w^{n_1} h_1^{-1})\cdots (h_k w^{n_k} h_k^{-1})
	\end{equation}
	for some $k\ge 1$, $n_i\in\Z\setminus\{0\}$, and $h_i\in H$.
	Such an expression provides a $w$-admissible $S$, which is a sphere with $k+1$ disks removed, where one boundary component represents $a$ and the other $k$ components represent the conjugacy classes of $w^{n_i}$ for $i=1,\dots, k$; see Figure \ref{fig: fromeqn}.
	
	If $S$ is boundary-compressible, then the simplified surface $S'$ above gives another expression of $a$ of the form (\ref{eqn: relation}) with a smaller $k$. Hence if $k$ is minimal among all such expressions of $g$, then the corresponding $w$-admissible surface $S$ is boundary-incompressible. 
\end{example}

\begin{figure}
	\labellist
	\small \hair 2pt
	\pinlabel $a$ at -10 105
	\pinlabel $S$ at 110 30
	\pinlabel $w^{n_1}$ at 240 200
	\pinlabel $w^{n_2}$ at 240 139
	\pinlabel $w^{n_3}$ at 240 79
	\pinlabel $w^{n_4}$ at 240 17
	\endlabellist
	\centering
	\includegraphics[scale=0.5]{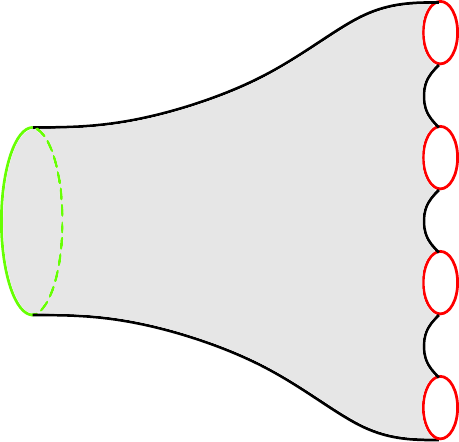}
	\caption{A $w$-admissible surface $S$ corresponding to an equation of the form (\ref{eqn: relation}) in the case $k=4$.}\label{fig: fromeqn}
\end{figure}

Note that the surface in the example above is connected, planar, and has exactly one $A$-boundary component. None of these properties are required for a $w$-admissible surface in general.

\section{A normal form}\label{sec: normal form}
Starting from this section, we focus on an HNN extension $H=A\star_C$ given by two injections $i_P, i_N:C\inj A$. Denote the two images as $C_P$ and $C_N$ respectively.

We develop a normal form for $w$-admissible surfaces, which is a decomposition into disks and annuli with combinatorial boundary information. This is parallel to the normal form in \cite{Chen:sclBS} for admissible surfaces (in the context of stable commutator length) in a graph of spaces. The fact that here we have boundary components representing $w^n$ with $n<0$ does not affect the process of simplifying an admissible surface to put it in the normal form. We include some details for completeness. The discussion below works for any graph of groups, but we focus on the case of an HNN extension for concreteness.

\subsection{Basic setup}\label{subsec: setup}
Let $(X_A,b_A)$ and $(X_C,b_C)$ be based $K(A,1)$ and $K(C,1)$ spaces respectively. The two inclusions of $C$ into $A$ are represented by continuous maps $i_P,i_N: (X_C,b_C)\to (X_A,b_A)$ respectively. Thus we can build the space $X$ as a graph of space, where the graph is just a loop with one vertex, $X_A$ is the vertex space, and $X_C$ is the edge space. Explicitly, $X$ is a quotient of $X_A\sqcup(X_C\times [-1,1])$, where any $(x,-1)\in X_C\times\{-1\}$ is glued to $i_N(x)\in X_A$ and $(x,1)\in X_C\times\{1\}$ is glued to $i_P(x)\in X_A$. The space $X$ built this way is a $K(H,1)$ for $H=A\star_C$.


Note that $X_A$ is naturally a subspace of $X$. We also identify $X_C$ with the subspace $X_C\times\{0\}$ of $X$, which has a product neighborhood $X_C\times(-1,1)$. Cutting $X$ along $X_C$ (and taking completion) yields a space $V$, which is the mapping cylinders associated to $i_P$ and $i_N$ with $X_A$ identified. We call $V$ the \emph{thickened vertex space} and note that it deformation retracts to $X_A$; see the top-right of Figure \ref{fig: graphofspace}. The image of $\{b_C\}\times[-1,1]$ in $X$ is a loop with the standard orientation on $[-1,1]$, which we denote by $\tau$. Denote by $t$ the corresponding element in $\pi_1(X,b_A)=H=A\star_C$. This way we obtain the standard (relative) presentation 
\begin{equation}\label{eqn: std presentation}
    H=\langle A, t\mid i_N(c)=ti_P(c)t^{-1}\text{ for all } c\in C\rangle.
\end{equation}

Fix any $w\in H$ that does not conjugate into $A$. Represent it as a loop $\gamma: S^1\to X$. 
We choose in below a good representative of $\gamma$ in its free homotopy class corresponding to a cyclically reduced expression $w=a_1 t^{e_1}\dots a_k t^{e_k}$ with $e_i=\pm 1$ and $a_i\in A$. We denote $|w|\defeq k$.
For each $a_i\in A$, represent it as a loop $\alpha_i$ in $X_A$ based at $b_A$. In the cyclically reduced expression above, replace each $a_i$ by $\alpha_i$, interpret each $t^{e_i}$ as the loop $\tau$ with the appropriate orientation depending on $e_i$, and replace group operation by concatenation to obtain our representative of $\gamma$. By construction, $\gamma$ intersects $X_C$ transversely exactly $k$ times.
Since $w$ does not conjugate into $A$, we have $k\ge1$, and the intersections with $X_C$ divide $\gamma$ into $k$ segments $\gamma_1,\dots,\gamma_k$, where each $\gamma_i$ starts (resp. ends) with the second (resp. first) half of $t^{e_{i-1}}$ (resp. $t^{e_i}$) and follows $\alpha_i$ in the middle, indices taken mod $k$.
We say such a representative $\gamma$ is \emph{tight} and fix it in the discussion below.

Using the product neighborhood of the edge space $X_C$ homeomorphic to $X_C\times(-1,1)$, each segment $\gamma_i$ starts from (resp. ends at) either the negative or positive side of $X_C$. This divides the above segments into four types, $PP$, $PN$, $NP$, $NN$, where the first (resp. second) letter indicates which side the arc starts from (resp. ends at), with $P$ and $N$ standing for positive and negative respectively. Algebraically, the first (resp. second) letter for the type of $\gamma_i$ is $P$ if $e_{i-1}=1$ (resp. $e_i=-1$). Similarly, this also defines the type of each segment $\bgamma{i}$ so that its type is the type of $\gamma_i$ with the two letters swapped. We will use this in Section \ref{subsec: polygonal boundary}.

\subsection{Putting $S$ in (simple) 
normal form}
Fix any $w$-admissible surface $f:S\to X$. Up to homotopy, we assume that $f$ restricted to each $w$-boundary is a covering map to $\gamma$ (i.e. it factors as $S^1\stackrel{p}{\to} S^1\stackrel{\gamma}{\to} X$ for a covering map $p$ of positive degree), and each $A$-boundary has image in $X_A$. Putting $f$ in general position so that it is transverse to $X_C$, then $f^{-1}(X_C)$ is an embedded proper submanifold of codimension $1$, i.e. a finite disjoint union of embedded loops and proper embedded arcs with endpoints on $w$-boundary components. $f^{-1}(X_C)$ divides each $w$-boundary of degree $n$ into exactly $|n|k$ segments.

The lemma below shows that one can always  homotope $f$ and possibly simplify $S$ so that $f^{-1}(X_C)$ has no embedded loops.

\begin{lemma}\label{lemma: no loop}
	For each $w$-admissible surface $f:S\to X$, there is another $w$-admissible surface $g:S'\to X$ with $\deg(S')=\deg(S)$ and $-\chi(S')\le-\chi(S)$ such that $f^{-1}(X_C)$ has no embedded loops.
\end{lemma}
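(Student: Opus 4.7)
The plan is to iteratively eliminate loop components of $f^{-1}(X_C)$ using standard innermost-disk and cutting moves, exploiting the injectivity of $i_P,i_N\colon C\inj A$ that is built into the HNN structure.

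First I would put $f$ into general position so that $f^{-1}(X_C)$ is a properly embedded $1$-submanifold of $S$. The $A$-boundary maps into $X_A$, which is disjoint from $X_C$ in the graph-of-spaces model, and each $w$-boundary restricts to a covering of the tight loop $\gamma$, meeting $X_C$ in exactly $k$ transverse points per full traversal. Hence all arc-endpoints of $f^{-1}(X_C)$ lie on the $w$-boundary, and the only components without endpoints are embedded loops, which are what we wish to remove.

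Fix an embedded loop $\ell\subset f^{-1}(X_C)$. Case (i): $\ell$ is null-homotopic in $S$. Choose $\ell$ to be innermost among such loops, so that there is an embedded disk $D\subset S$ with $\partial D=\ell$ and $\mathrm{int}(D)\cap f^{-1}(X_C)=\emptyset$. Then $f(\mathrm{int}(D))$ lies in one component of $X\setminus X_C$, which sits inside the thickened vertex space $V$ and deformation retracts onto $X_A$. Thus $f|_D$ exhibits the class $[f(\ell)]\in C$, viewed in $A$ through whichever attaching map $i_\pm$ applies to that side, as trivial in $A$; injectivity of $i_\pm$ then forces $f(\ell)$ to be null-homotopic in $X_C$ itself. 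Using this null-homotopy I would redefine $f$ on a small neighborhood of $D$, pushing $D$ entirely into one side of $X_C$ inside $V\setminus X_C$. This local modification preserves the boundary behavior, removes $\ell$ from $f^{-1}(X_C)$, and introduces no new components of it. Case (ii): $\ell$ is essential in $S$. Since the normal bundle of $X_C$ in $X$ is trivial, $\ell$ is two-sided, so I can cut $S$ along $\ell$. The resulting oriented surface $S'$ has $\chi(S')=\chi(S)$ and acquires two new boundary components mapped into $X_C\subset X_A$, which therefore join the $A$-boundary of $S'$. The $w$-boundary is untouched, so $\deg(S')=\deg(S)$ and $-\chi(S')=-\chi(S)$, while the number of loop components has dropped by one.

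Each move strictly decreases the number of loops in the preimage of $X_C$, so iterating terminates at a pair $(S',g)$ satisfying $\deg(S')=\deg(S)$, $-\chi(S')\le -\chi(S)$, and $g^{-1}(X_C)$ free of embedded loops. The only nonroutine point is the claim in Case (i) that $f(\ell)$ is null-homotopic in $X_C$ and not merely in $X$; this is exactly the injectivity input from the HNN-extension hypothesis, and once it is in hand the compression move is standard and can be performed locally enough that no new intersections with $X_C$ are created.
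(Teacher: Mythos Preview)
Your argument is correct and takes a slightly different route from the paper's. The paper splits into cases according to whether the \emph{image} $f(\ell)$ is null-homotopic in $X$, not whether $\ell$ is null-homotopic in $S$: when $f(\ell)$ is trivial in $X$ it performs an actual compression (surgery along a disk mapped into $X_C$), obtaining $\chi(S')=\chi(S)+2$, whereas you leave $S$ untouched and only homotope $f$; when $f(\ell)$ is essential in $X$ both proofs cut. Your Case~(i) is arguably tidier, since no surgery is needed and the injectivity of the edge group is used in exactly the same way.

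There is one omission worth noting. In your Case~(ii) the cut may produce a component $\Sigma$ of $S'$ with no $w$-boundary, violating the paper's standing convention that every component of a $w$-admissible surface has nonempty $w$-boundary. The fix is immediate in your setup: since $\ell$ is essential in $S$, no component of $S'$ bounded by a copy of $\ell$ can be a disk, so $\chi(\Sigma)\le 0$ and discarding $\Sigma$ still yields $-\chi(S'\setminus\Sigma)\le -\chi(S)$. The paper carries out precisely this check (in both of its cases), and you should record it too. A minor point of phrasing: $X_C$ is not literally a subset of $X_A$ in the graph-of-spaces model; the two new boundary circles land in $X_C$ and must then be pushed into $X_A$ along the retraction of the thickened vertex space $V$.
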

\begin{proof}
	By transversality, $f^{-1}(X_C)$ is a disjoint union of finitely many embedded loops in $S$. 	
	To prove the lemma, we modify $S$ and $f$ to reduce the number of loops (i.e. components) in $f^{-1}(X_C)$.
	
	For any embedded loop $L$ in $f^{-1}(X_C)$, if its image is null homotopic in $X$ then the restriction of $f$ to $L$ extends to a disk $D$. Moreover, since $X_C$ is $\pi_1$-injective, we may assume that $f(D)\subset X_C$. In this case we can compress $S$ along $L$ (i.e. cut $S$ along $L$ and glue with two copies of $D$) to obtain $S'$ and a map $g:S'\to X$ using the extension of $f|_L$ on $D$ above. Homotope $g$ to push $g(D)$ in the direction away from $X_C$ so that $g^{-1}(X_C)$ has one less component (corresponding to $L$) than $f^{-1}(X_C)$. $S'$ has all the required properties and $\chi(S')=\chi(S)+2$, except that $S'$ might have a component $\Sigma$ that has no $w$-boundary. To make sure that $S'$ meets our convention, in this situation we simply remove this component from $S'$ and $\chi(S'\setminus \Sigma)=\chi(S')-\chi(\Sigma)\ge \chi(S)$ as desired.
	
	If the image of $L$ represents a nontrivial conjugacy class, then we cut $S$ along $L$ to obtain a new surface $S'$ with $\chi(S')=\chi(S)$ and a map $g: S'\to X$ induced by $f$. Pushing the two new boundary components corresponding to $L$ away from $X_C$ and into $X_A\subset X$, this reduces the number of embedded loops in the preimage of $X_C$ and makes $S'$ into a $w$-admissible surface, which simply has two more $A$-boundary components compared to $S$. In particular $\deg(S')=\deg(S)$. In the special case where $L$ cuts out a component $\Sigma$ that has no $w$-boundary, since $f(L)$ represents a nontrivial class in $X$, we see that $\chi(\Sigma)\le 0$ and hence removing it from $S'$ gives the desired inequality $-\chi(S')\le-\chi(S)$.
	
	Repeating the two procedures above on each embedded loop in $ f^{-1}(X_C)$ completes the proof.
\end{proof}

Now suppose $F\defeq f^{-1}(X_C)$ is a finite disjoint union of embedded proper arcs with endpoints on $w$-boundary components; see Figure \ref{fig: graphofspace}.
It follows that $S\setminus F$ has two types of boundary components:
\begin{enumerate}
	\item $A$-boundary components, exactly corresponding to those on $S$;
	\item \emph{polygonal boundary components}, each of which is divided into an even number of sides that alternate between arcs in $F$ and segments on some $w$-boundary of $S$. Its structure will be discussed in more detail in Section \ref{subsec: polygonal boundary}.
\end{enumerate}

\begin{figure}
	\labellist
	\small \hair 2pt
	\pinlabel $a_1$ at -12 220
	\pinlabel $a_2$ at -12 100
	\pinlabel $S$ at 150 25
	\pinlabel $w^{-2}$ at 330 55
	\pinlabel $w$ at 310 190
	\pinlabel $w$ at 310 310
	
	\pinlabel $f$ at 345 180
	
	\pinlabel $X_C$ at 473 103
	\pinlabel $X$ at 485 50
	\pinlabel $X_A$ at 545 220
	\pinlabel $\gamma_w$ at 502 210
	
	\pinlabel $V$ at 535 370
	\pinlabel $X_A$ at 540 440
	\endlabellist
	\centering
	\includegraphics[scale=0.5]{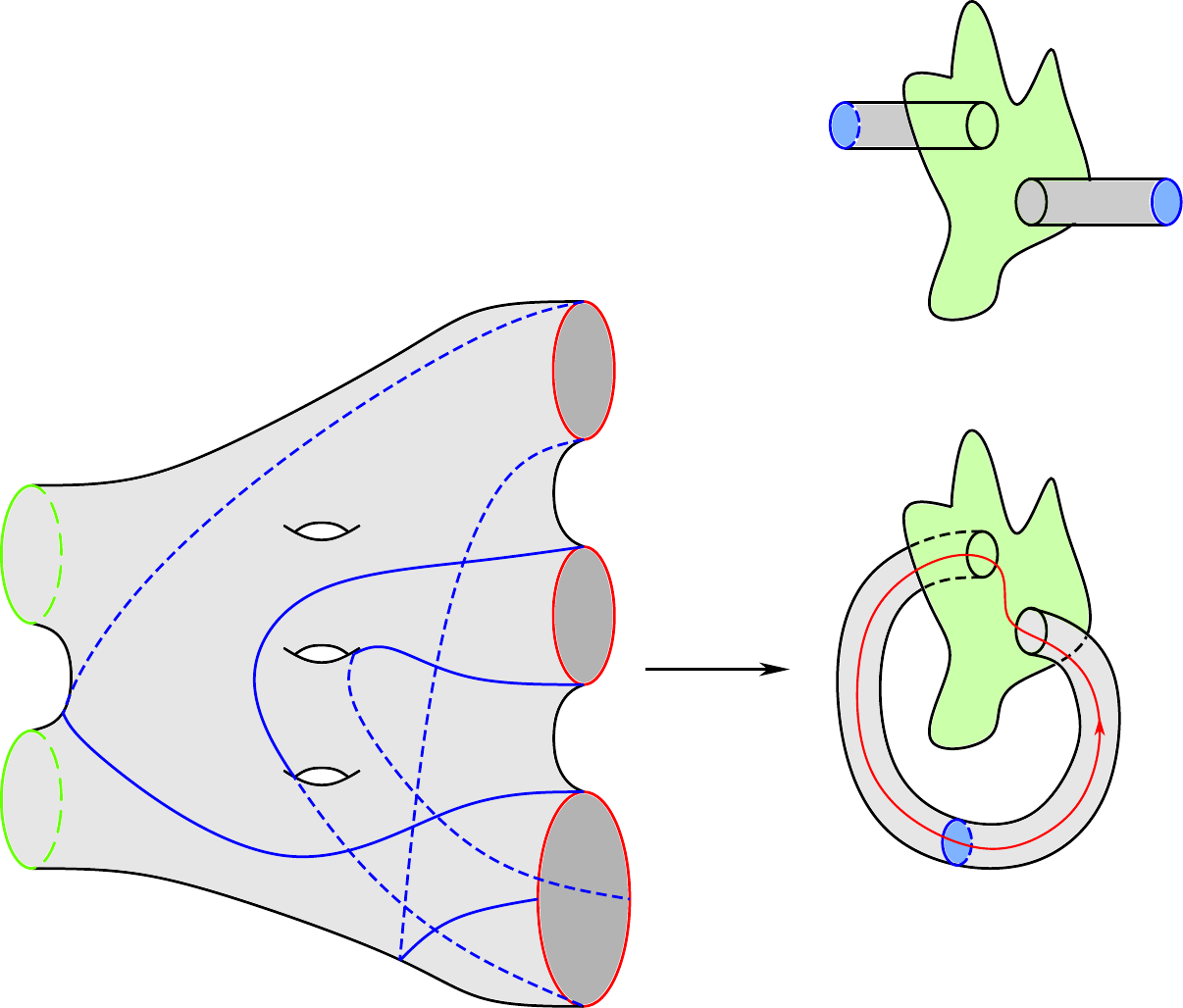}
	\caption{$F=f^{-1}(X_C)$ is a set of embedded disjoint proper arcs in the $w$-admissible surface $S$ after applying Lemma \ref{lemma: no loop}. After cutting, $S\setminus F$ maps into the thickened vertex space $V$, which deformation retracts to $X_A$.}\label{fig: graphofspace}
\end{figure}

Now $S\setminus F$ maps into the thickened vertex space $V$ and hence each polygonal boundary represents a conjugacy class in $A$, referred to as its \emph{winding class}.

\begin{definition}[(simple) normal form, disk-pieces, and annulus-pieces]\label{def: normal form}
	We refer to each component of $S\setminus F$ as a \emph{piece}. Such a decomposition of $S$ into pieces is called a \emph{normal form} of $S$. A normal form is \emph{simple} if each piece has exactly one polygonal boundary and is either a disk or an annulus, depending on whether the winding class of the unique polygonal boundary is trivial.

	We refer to the two kinds of pieces in a simple normal form as \emph{disk-pieces} and \emph{annulus-pieces} based on their topological type; see Figure \ref{fig: piecesturns} an illustration of such pieces.
\end{definition}

We can always simplify $S$ so that it admits a simple normal form.

\begin{lemma}\label{lemma: simple normla form}
	For any $w$-admissible surface $S$, there is a $w$-admissible surface $S'$ with $\deg(S')=\deg(S)$ and $-\chi(S')\le -\chi(S)$ so that $S'$ admits a simple normal form. Moreover, $S'$ can be chosen to be boundary-incompressible if $S$ is.
\end{lemma}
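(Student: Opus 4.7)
The plan is a two-stage simplification. First, apply Lemma \ref{lemma: no loop} to reduce to the case that $F=f^{-1}(X_C)$ consists only of proper arcs. Second, simplify each piece of $S\setminus F$ independently, exploiting that the thickened vertex space $V$ deformation retracts to $X_A=K(A,1)$.

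For the second stage, I would process each piece $P$ of $S\setminus F$ as follows. Discard $P$, leaving the arcs of $F$ on $\partial P$ untouched, and reattach one new piece per polygonal boundary $\beta_j$ of $P$. If the winding class $[\beta_j]\in A$ is trivial, attach a disk-piece $D_j$ obtained from a null-homotopy of $\beta_j$ in $X_A$. If $[\beta_j]$ is nontrivial, attach an annulus-piece $A_j$ obtained from a free homotopy in $X_A$ between $\beta_j$ and a newly created $A$-boundary of $S'$ representing $[\beta_j^{-1}]$. Both homotopies exist because $X_A$ is aspherical with $\pi_1=A$. The original $A$-boundaries of $P$ simply disappear. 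Because each $\beta_j$ together with its arc/$\gamma$-segment decomposition is reused verbatim as the boundary of its replacement, the $w$-boundary of $S$ is preserved, so $\deg(S')=\deg(S)$, and $S'$ is a $w$-admissible surface in simple normal form.

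To verify $-\chi(S')\le -\chi(S)$, observe that $F$ is unchanged, so $\chi(S')-\chi(S)=\sum_P\left(\chi(P')-\chi(P)\right)$, summed over all pieces $P$. Writing $\chi(P)=2-2g-p-q$ for $P$ of genus $g$ with $p$ polygonal and $q$ $A$-boundaries, and splitting $p=d+a$ according to trivial versus nontrivial winding classes, the replacement $P'$ is a disjoint union of $d$ disks and $a$ annuli, so $\chi(P')=d$ and
\[\chi(P')-\chi(P)=2d+a+2g+q-2.\]
The right-hand side is nonnegative in every case: if $p\ge 2$, then $2d+a\ge d+p\ge 2$; if $p=1$ with trivial winding class, then $d=1$ gives $2d=2$; the remaining possibility $p=1$, $d=0$, $g=q=0$ is ruled out because $P$ would be a disk whose boundary has nontrivial winding class, contradicting that a disk's boundary is null-homotopic in any target. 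Hence $\chi(S')\ge\chi(S)$ as required.

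The main point needing care is the bookkeeping of the replacement step: each arc of $F$ adjacent to $P$ must be reattached to the correct new piece, with orientations matched, even when the same arc of $F$ lies on both sides of $P$ or appears as two different segments of $\partial P$ lying on distinct polygonal boundaries of $P$. This is a combinatorial matter that follows directly from inheriting each $\beta_j$'s arc/segment structure from $P$, but it is the step where one must be pedantic when writing out the full proof.
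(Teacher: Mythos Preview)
Your proposal is correct and takes essentially the same approach as the paper: after removing loops via Lemma~\ref{lemma: no loop}, you replace each piece by a collar of each polygonal boundary, then either cap it off to a disk or leave it as an annulus according to the winding class. The paper does this in two passes (first collars, then cap off the trivial ones) while you do it in one, and your Euler characteristic bookkeeping is more explicit, but the argument is the same.
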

\begin{proof}
	By the discussion above, we may simplify $S$ so that it admits a normal form, which may not be simple in general.
	Note that each piece of $S$ has at least one polygonal boundary since each component of $S$ has nonempty $w$-boundary and $F$ contains no embedded loop.
	Suppose $S$ has a piece $P$ with at least two polygonal boundary components. Then $\chi(P)\le0$.
	Cut out a collar neighborhood of each polygonal boundary and remove the remaining part of $P$ to obtain a new surface $S'$. The part ignored has the same homotopy type as $P$ and hence has non-positive Euler characteristic. Hence $-\chi(S')\le -\chi(S)$.
	Up to homotopy we may assume the non-polygonal boundary of each collar neighborhood is mapped to the vertex space $X_A$. This makes $S'$ a $w$-admissible surface with $\deg(S')=\deg(S)$.
	The same procedure can be done if $P$ has exactly one polygonal boundary with $\chi(P)<0$.
	If there is an annulus piece $P$ where the polygonal boundary has trivial winding class, then the other boundary is a null homotopic loop in $X_A$, which we cap it off and decreases the negative Euler characteristic.
	Repeating the procedures above we arrive at a desired $w$-admissible surface $S'$ in simple normal form.
	
	Note that in the simplifying process above, including that in the proof of Lemma \ref{lemma: no loop}, $S'$ is obtained from $S$ by three kinds of modifications: homotope the map $f$ to $X$; cut along a loop representing a conjugacy class in $A$ and restrict the map to $X$ to a subsurface of the resulting surface; cut along a loop that is null homotopic in $X$ and fill in with two disks.
\end{proof}

\subsection{The structure of a polygonal boundary}\label{subsec: polygonal boundary}
Suppose as above that $w$ is written as a cyclically reduced word $w=a_1 t^{e_1}\cdots a_k t^{e_k}$ with $e_i=\pm 1$ and $a_i\in A$, represented by a tight loop $\gamma$ in $X$ corresponding to this expression. Recall from Section \ref{subsec: setup} that the edge space $X_C$ cuts $\gamma$ into $k$ segments $\gamma_1,\dots, \gamma_k$, equipped with the orientation induced from $\gamma$. The segments with the reversed orientation are denoted as $\bgamma{1},\dots,\bgamma{k}$. Also recall that segments fall into four types, $PP$, $PN$, $NP$, $NN$, depending on which side of $X_C$ the segment starts and ends at.

Fix a (disk- or annulus-)piece. Its unique polygonal boundary has an induced orientation. By definition, every other side of the polygonal boundary is a copy of some $\gamma_i$ or $\bgamma{i}$ (depending on whether the $w$-boundary it lies on is positive or negative). We refer to these sides as \emph{arcs}.
The other half of the sides are proper arcs in $F=f^{-1}(X_C)$, which we call \emph{turns}, each starting from an arc $\alpha=\gamma_i^{\pm1}$ to another arc $\alpha'=\gamma_j^{\pm1}$ for some $i,j$. By our choice of $\gamma$ and $\tau$, such a turn as a path in $X_C$ starts and ends at the base point $b_C$ and hence is a based loop representing some element $c\in C$, referred to as the \emph{winding number}. We encode the type of each turn as an ordered triple $(\alpha,c,\alpha')$.

Recall that each piece is mapped to the thickened vertex space $V$, and hence each turn is either on the positive side or the negative side of $X_C$. If a turn has type $(\alpha,c,\alpha')$ and lies on the positive side, then $\alpha$ must end on the positive side and $\alpha'$ must start from the positive side. Similarly if such a turn lies on the negative side. In particular, not every ordered pair of arcs $(\alpha,\alpha')$ can appear in the triple describing the type of a turn.


There is a pairing of turns in the normal form of a $w$-admissible surface as pieces are glued together along turns. Two paired turns are on the opposite sides of $X_C$.
The type of a turn determines the type of its paired turn. For instance, a turn of type $(\gamma_i,c,\gamma_j)$ must be paired with a turn of type $(\gamma_{j-1},c^{-1},\gamma_{i+1})$, indices taken mod $k$, where the winding number becomes its inverse due to the opposite orientation induced from the two pieces; see Figure \ref{fig: piecesturns}. 
We say two such turn types are \emph{paired}.
\begin{figure}
	\labellist
	\small \hair 2pt
	\pinlabel $\gamma_k$ at 6 60
	\pinlabel $\gamma_j$ at 60 93
	\pinlabel $\gamma_i$ at 60 27
	
	\pinlabel $c$ at 95 60
	\pinlabel $c^{-1}$ at 117 62
	
	\pinlabel $\gamma_{j-1}$ at 142 80
	\pinlabel $\gamma_{i+1}$ at 142 40
	
	\endlabellist
	\centering
	\includegraphics[scale=1]{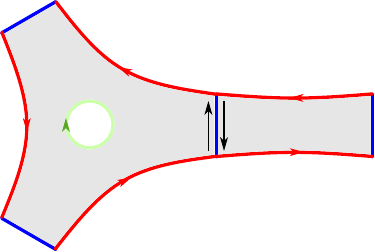}
	\caption{An annulus-piece (left) and a disk-piece (right) glued along paired turns that are of types $(\gamma_i,c,\gamma_j)$ and $(\gamma_{j-1},c^{-1},\gamma_{i+1})$ for some $c\in C$.}\label{fig: piecesturns}
\end{figure}

In below are some basic observations in relation to some crucial assumptions we made. The first is related to the tightness of $\gamma$.
\begin{lemma}\label{lemma: one turn}
	The polygonal boundary of any disk-piece in a simple normal form of a $w$-admissible surface has at least two turns.
\end{lemma}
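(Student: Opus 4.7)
The plan is to argue by contradiction: suppose a disk-piece $D$ has a polygonal boundary with at most one turn. Since arcs and turns alternate in equal numbers on every polygonal boundary, and a polygonal boundary is non-empty by definition, $D$ must carry exactly one arc (a copy of $\gamma_i^{\pm 1}$ for some $i$) and exactly one turn.

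The first step is a topological observation. Since the single turn lies on just one side of $X_C$, both endpoints of the arc must lie on that same side. Using the type recipe from Section \ref{subsec: setup}, which relates the start/end types of $\gamma_i$ to the signs of $e_{i-1}, e_i$, this forces $e_{i-1}$ and $e_i$ to have opposite signs. Consequently, the subword of $w$ around $a_i$ is either $t a_i t^{-1}$ (both endpoints on the positive side) or $t^{-1} a_i t$ (both on the negative side). Cyclic reducedness of $w$ then says $a_i \notin C_P$ in the first case and $a_i \notin C_N$ in the second, because the HNN relation $t\, i_P(c)\, t^{-1} = i_N(c)$ would otherwise provide a pinch.

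The second step is the winding-class computation. Because $D$ is a disk mapped into the thickened vertex space $V$, which deformation retracts to $X_A$, its boundary loop is null-homotopic in $V$ and hence trivial in $A = \pi_1(X_A)$. Retracting through the appropriate mapping cylinder collapses the two $t$-halves inside $\gamma_i$ to constant paths at $b_A$, leaving $\alpha_i$, and sends the turn (a loop at $b_C$ representing some $c \in C$) to $i_P(c)$ or $i_N(c)$ depending on the side. The null-homotopy therefore reads $a_i \cdot i_P(c) = 1$ in the first case, forcing $a_i \in C_P$, and $a_i \cdot i_N(c) = 1$ in the second, forcing $a_i \in C_N$. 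Either outcome contradicts the cyclic reducedness established in step one. The reversed-orientation case (arc $=\bgamma{i}$) is symmetric: $a_i$ is replaced by $a_i^{-1}$ in the winding equation, which lands in $C_P$ or $C_N$ exactly when $a_i$ does.

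The only real bookkeeping point, and the step to execute with care, is ensuring that the side of $X_C$ on which the turn sits (and hence the choice between $i_P$ and $i_N$ in the retraction) is correctly coupled to the type of $\gamma_i$ determined in step one. Once this matching is done, the argument is a clean marriage of the topological constraint (null-homotopy of a disk) with the combinatorial constraint (tightness of $\gamma$).
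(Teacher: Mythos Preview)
Your proof is correct and follows essentially the same approach as the paper's own proof: both argue by contradiction, use the fact that the single turn forces the two ends of $\gamma_i$ onto the same side of $X_C$ (hence $e_{i-1}=-e_i$ and $a_i$ lies outside the relevant image of $C$ by tightness), and then derive a contradiction from the disk's null-homotopy forcing $a_i$ into that same image of $C$. If anything, your write-up is more careful than the paper's about distinguishing $C_P$ from $C_N$ and matching the side of the turn to the correct inclusion $i_P$ or $i_N$.
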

\begin{proof}
	It the polygonal boundary has only one turn, then it has only one arc as well. Suppose the arc is a copy of $\gamma_i$. In the cyclically reduced expression $w=a_1 t^{e_1}\cdots a_k t^{e_k}$, the segment $\gamma_i$ corresponds to $a_i\in A$. As the two ends of $\gamma_i$ are connected by this turn, they lie on the same side of $X_C$, which means $e_i=-e_{i+1}$, and thus we must have $a_i\notin C$.
	Now the disk-piece provides a homotopy between $\gamma_i$ and the turn relative to the endpoints. Since the turn is a loop in the edge space $X_C$, we have $a_i\in C$, leading to a contradiction.
\end{proof}

The second is an interpretation of the boundary-incompressibility of admissible surfaces.
\begin{lemma}\label{lemma: no backtracking}
	In the normal form of a $w$-admissible boundary-incompressible surface $S$, there is no turn of type $(\gamma_i,id,\bgamma{i})$ or $(\bgamma{i},id,\gamma_i)$ for any $i$.
\end{lemma}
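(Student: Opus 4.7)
The plan is to argue by contradiction: assume such a turn exists, and use it to build a pair-of-pants subsurface $\Sigma \subset S$ witnessing boundary compressibility in the sense of Definition \ref{def: bdry incompressible}. We treat the case $(\gamma_i, id, \bgamma{i})$, with the other case being symmetric.

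Suppose $\tau$ is a proper arc in $F = f^{-1}(X_C)$ whose two sides, viewed in the adjacent pieces, give turns of type $(\gamma_i, id, \bgamma{i})$. By the definition of arcs on a polygonal boundary (Section \ref{subsec: polygonal boundary}), one endpoint of $\tau$ lies on a segment of type $\gamma_i$ on some $w$-boundary component $B^+$ of $S$, and the other endpoint lies on a segment of type $\bgamma{i}$ on some $w$-boundary component $B^-$. Since $f$ restricted to a $w$-boundary is a covering of $\gamma$, a boundary component carrying a segment $\gamma_i$ (positively oriented) can only be a positive $w$-boundary, while one carrying $\bgamma{i}$ must be negative. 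Thus $B^+$ represents $w^n$ for some $n \ge 1$, $B^-$ represents $w^{-m}$ for some $m \ge 1$, and in particular $B^+ \neq B^-$.

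Now let $\Sigma$ be a small regular neighborhood of $B^+ \cup \tau \cup B^-$ inside $S$. Since $B^+$ and $B^-$ are disjoint circles in $S$ and $\tau$ is an embedded arc joining them, $\Sigma$ is a pair of pants whose three boundary components are parallel copies of $B^+$, $B^-$, and a new loop $D$. Orient all three boundary components as boundary of $\Sigma$; the orientations on the parallel copies of $B^\pm$ then coincide with those induced on them as boundary components of $S$. In $\pi_1(\Sigma)$ one has the relation $[B^+]\,[B^-]\,[D] = 1$ (after choosing a base point and appropriate connecting paths in $\Sigma$), so $f_*[D] = f_*[B^-]^{-1} f_*[B^+]^{-1}$ up to conjugation.

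It remains to identify $f_*[D]$ as $w^{m-n}$. The key point is that the winding number of $\tau$ is $id \in C$: the loop in $X_C$ obtained by concatenating $\tau$ with its base-point connecting path is null-homotopic in $X_C$. Since $C \hookrightarrow H$ injects in the HNN extension, the path $f\circ\tau$ is homotopically trivial when viewed as a based loop in $X$. Using $\tau$ to transport the base point from $B^+$ to $B^-$ thus contributes trivially, so $f_*[D]$ equals $(w^{-m})^{-1}(w^n)^{-1} = w^{m-n}$ up to conjugation. Therefore $\Sigma$ satisfies all the conditions in Definition \ref{def: bdry incompressible}, showing that $S$ is boundary compressible, a contradiction. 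The main subtlety is keeping track of orientations and base points in the pair-of-pants relation, but the triviality of the winding number makes the conjugation issues disappear cleanly.
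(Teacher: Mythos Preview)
Your argument is correct and follows essentially the same route as the paper's proof: both identify the turn as a proper arc $\tau$ joining a positive $w$-boundary $B^+$ (carrying $w^n$) to a negative one $B^-$ (carrying $w^{-m}$), take a regular neighborhood of $B^+\cup\tau\cup B^-$ to obtain a pair of pants, and use the trivial winding number to conclude the third boundary represents $w^{m-n}$, contradicting boundary incompressibility. One small phrasing issue: the arc $\tau$ appears as a turn on the polygonal boundary of a single piece with type $(\gamma_i,id,\bgamma{i})$ (its paired turn on the other side has a different type), so ``whose two sides \ldots give turns of type $(\gamma_i,id,\bgamma{i})$'' is slightly off, but this does not affect the argument.
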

\begin{proof}
	If there were such a turn of type $(\gamma_i,id,\bgamma{i})$, then it is a proper arc going from a positive $w$-boundary of $S$ representing $w^n$ to a negative one representing $w^{-m}$ for some $m,n\in\Z_+$. The union of these two boundary components with this proper arc has a collar neighborhood $\Sigma\subset S$ that is a pair of pants. The fact that the winding number of the turn is $id\in C$ implies that the third boundary of $\Sigma$ represents $w^{m-n}$ (with the orientation induced from $\Sigma$). This contradicts the boundary-incompressibility of $S$.
\end{proof}

\subsection{Possible pieces}\label{subsec: possible pieces}
For a $w$-admissible surface in simple normal form, we know by definition it consists of disk-pieces and annulus-pieces. We define explicitly a set $\Pcal$ of disk-pieces and annulus-pieces, which include all pieces that may appear in a simple normal form of some boundary-incompressible $w$-admissible surface. The pieces in $\Pcal$ a priori may not come from a $w$-admissible surface.

To describe a piece in $\Pcal$, we start by constructing a map into $X$ from an oriented circle divided into $2n$ sides for some $n\ge1$, which will be the polygonal boundary. Label the sides in a cyclic sequence $(s_1,\dots, s_{2n})$. For any $1\le j\le n$, let $s_{2j-1}$ be a copy of $\gamma_{i_j}^{e_j}$ with $e_j=\pm 1$ so that it serves as an arc, and let $s_{2j}$ be mapped to a loop in $X_C$ based at $b_C$ representing some $c_j\in C$ to serve as a turn of type $(\gamma_{i_j}^{e_j},c_j,\gamma_{i_{j+1}}^{e_{j+1}})$.

There are two requirements. Firstly, each turn $s_{2j}$ is on one side of $X_C$: Either $\gamma_{i_j}^{e_j}$ ends on the positive side and $\gamma_{i_{j+1}}^{e_{j+1}}$ starts from the positive side so that $s_{2j}$ is on the positive side, or $s_{2j}$ is on the negative side defined in a similar manner.
Secondly, if $i_j=i_{j+1}$ and $e_j=-e_{j+1}$, then we require $c_j\neq id_C$ as they are ruled out by boundary-incompressibility as in Lemma \ref{lemma: no backtracking}.

We say a turn type is \emph{admissible} if it satisfies both requirements. Denote by $\Tcal$ the set of admissible turn types.

We say such a circle with the map described above satisfying both requirements is \emph{an abstract polygonal boundary}, which defines a loop in $X$. As a consequence of the first requirement, this loop naturally shrinks to a loop in the thickened vertex space $V$ and further to a loop in $X_A$. Hence each abstract polygonal boundary represents a conjugacy class in $A$, which we refer to as \emph{the winding class}. 

The assumption that $\gamma$ is tight implies that any abstract polygonal boundary with only one turn (and one arc) has nontrivial winding class, similar to Lemma \ref{lemma: one turn}.

Now we construct an abstract piece in $\Pcal$ for any given abstract polygonal boundary. If the winding class is trivial, think of the underlying circle as the boundary of a disk, then the map extends to the interior of the disk. This disk with the map into $X$ is an abstract disk-piece in $\Pcal$. If the winding class is nontrivial, consider an annulus where one of the boundary circle is the abstract polygonal boundary and the other is a loop in $X_A$ whose inverse represents the winding class. The map on the annulus is a homotopy, which defines an annulus-piece in $\Pcal$.

The set $\Pcal$ is the set of all abstract disk- or annulus-pieces. Clearly by Lemma \ref{lemma: no backtracking}, the polygonal boundary of a genuine disk-piece or annulus-piece has the structure of an abstract polygonal boundary, and the notion of the winding class agrees. Thus each piece that appears in a simple normal form of some boundary-incompressible $w$-admissible surface lies in $\Pcal$. It is not important to us but it seems that (in all known cases) all pieces in $\Pcal$ appear this way: The strategy is to glue finitely many abstract pieces together to close up all corners, but how exactly it works out needs a case-by-case analysis which we do not pursue here.


\subsection{The gluing graph and Euler characteristic}
For any admissible surface $S$ in normal form, there is a \emph{gluing graph} $\Gamma_S$ that encodes how the surface decomposes into pieces. Each vertex of $\Gamma$ represents a piece in the normal form and each edge represents a gluing along paired turns of two pieces. By Mayer--Vietoris, we have
$$\chi(S)=\sum_v \chi(v)- \# e,$$
where the summation is taken over all vertices $v$ of $\Gamma_S$, $\chi(v)$ is the Euler characteristic of the piece corresponding to $v$, and $\# e$ is the number of edges in $\Gamma_S$.

When $S$ is decomposed in \emph{simple} normal form, each piece is either a disk or an annulus. Hence $v_d\defeq \sum_v \chi(v)$ is the number of disk-pieces. 

Note that each edge $e$ glues two turns together, so $2\#e$ is the total number of turns. Since on each polygonal boundary, half of the sides are turns and the other half are arcs, the total number of turns is also the total number of arcs. Recall that each copy of the tight loop $\gamma$ representing $w$ is cut into $|w|$ arcs, so the total number of arcs is $\deg(S)\cdot |w|$. Hence
$$2\# e=\# \text{turns}=\deg(S)\cdot |w|.$$

The following lemma summarizes the calculations above.
\begin{lemma}\label{lemma: Euler}
    For any $w$-admissible surface $S$ in simple normal form, we have
    \begin{equation}\label{eqn: Euler}
        -\chi(S)=\frac{1}{2}\deg(S)\cdot |w| - v_d,
    \end{equation}
    where $v_d$ is the total number of disk pieces in $S$.
\end{lemma}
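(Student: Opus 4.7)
The plan is to use the inclusion–exclusion principle (Mayer–Vietoris) already hinted at in the paragraph preceding the lemma statement, keeping track of two quantities: the sum of Euler characteristics of the pieces, and the number of gluings in $\Gamma_S$. The whole proof is an exercise in counting.

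First I would record the Euler characteristic contribution of each piece. In a simple normal form every piece is either a disk (contributing $\chi = 1$) or an annulus (contributing $\chi = 0$). Hence
\[
\sum_{v \in V(\Gamma_S)} \chi(v) \;=\; v_d .
\]
Combined with the Mayer–Vietoris identity $\chi(S) = \sum_v \chi(v) - \#e$ this reduces the problem to showing that
\[
\#e \;=\; \tfrac{1}{2}\deg(S)\cdot |w| .
\]

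Next I would count edges via turns. Every edge of $\Gamma_S$ corresponds to a gluing of exactly two paired turns, so $2\#e$ equals the total number of turns appearing on all polygonal boundaries. On any polygonal boundary the sides alternate between arcs and turns, so the number of turns equals the number of arcs on that boundary; summing over pieces, the total number of turns equals the total number of arcs. Finally, since $\gamma$ is tight and meets $X_C$ transversely in exactly $|w|$ points, each $w$-boundary component representing $w^{n}$ contributes exactly $|n|\cdot|w|$ arcs. Summing over all $w$-boundary components yields
\[
\#\text{arcs} \;=\; \deg(S)\cdot |w| ,
\]
and therefore $2\#e = \deg(S)\cdot |w|$. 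Substituting back gives
\[
-\chi(S) \;=\; \#e - \sum_v \chi(v) \;=\; \tfrac{1}{2}\deg(S)\cdot|w| - v_d .
\]

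There is no serious obstacle: the only subtlety is making sure the arc-count is correct, which relies on $\gamma$ being a \emph{tight} representative (so that the preimage $\gamma^{-1}(X_C)$ has exactly $|w|$ points per traversal) together with the fact that $f$ restricted to each $w$-boundary component of degree $|n|$ is an $|n|$-fold cover of $\gamma$. Both are built into the setup of Section \ref{subsec: setup} and the normalization carried out at the start of Section \ref{sec: normal form}, so the argument is essentially bookkeeping once these conventions are invoked.
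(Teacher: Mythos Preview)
Your proof is correct and follows essentially the same approach as the paper: both use the Mayer--Vietoris identity $\chi(S)=\sum_v\chi(v)-\#e$, observe that $\sum_v\chi(v)=v_d$ in simple normal form, and count $2\#e=\#\text{turns}=\#\text{arcs}=\deg(S)\cdot|w|$ via the tight representative $\gamma$. The paper in fact carries out exactly this bookkeeping in the paragraphs preceding the lemma and then states the lemma as a summary.
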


\section{The LP-duality method}\label{sec: duality}
In Theorems \ref{thmA: less tech main} and \ref{thmA: main}, the goal is to establish a lower bound of $-\chi(S)$ by a multiple of $\deg(S)$ for all boundary-incompressible $w$-admissible surfaces $S$, which we may assume to be in a simple normal form by Lemma \ref{lemma: simple normla form}. In view of formula (\ref{eqn: Euler}), this is equivalent to proving an upper bound of $v_d$ by a multiple of $\deg(S)$.

We prove such inequalities using a method analogous to the weak duality of linear programming. 
The method was originally developed by the author to prove uniform lower bounds (called spectral gaps) of stable commutator lengths; see \cite[Section 6.3]{Chen:sclBS} and \cite[Section 3.2]{CH:sclgap}.
We adapt the approach to our setting in this section, which comes down to the construction of a cost function meeting certain requirements. Theorems \ref{thmA: less tech main} and \ref{thmA: main} essentially follow from Theorem \ref{thm: HNN main}, which we prove by constructing a suitable cost function.

Given a boundary-incompressible $w$-admissible surfaces $S$ in simple normal form, we can count the total number $t_{T}\in\Z_{\ge0}$ of turns that have a given type $T\in\Tcal$, which is nonzero for finitely many turn types by compactness. The collection of numbers $(t_{T})_{T\in\Tcal}$ satisfies a \emph{gluing condition}, namely, $t_T=t_{T'}$ if $T$ and $T'$ are paired turn types, since each turn of type $T$ is glued to a turn of type $T'$ in $S$ when pieces are glued together.

A cost function on turns is a map $c: \Tcal\to \R$ that assigns a value to each admissible turn type in $\Tcal$. This naturally induces a cost function on the set $\Pcal$ of possible pieces. Namely, for each $P\in\Pcal$, the value $c(P)$ is the sum of $c(\alpha)$ over all turns $\alpha$ on the polygonal boundary of $P$ and $c(\alpha)$ is set to be $c(T)$ if $T\in \mathcal{T}$ is the type of the turn $\alpha$.

We are interested in cost functions meeting two requirements, one relating the cost to $v_d$, the total number of disk pieces, and the other relating the cost to the degree $\deg(S)$.

\begin{lemma}\label{lemma: cost and disk number}
    For a cost function $c: \Tcal\to \R$, if the induced cost function on possible pieces satisfies $c(P)\ge \chi(P)$ for any $P\in \Pcal$, then using the notation above we have
    $$\sum_{T\in\Tcal} c(T) t_T\ge v_d$$
    for any boundary-incompressible $w$-admissible surfaces $S$ in simple normal form.
\end{lemma}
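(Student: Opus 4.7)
The plan is to sum the inequality $c(P) \ge \chi(P)$ over every piece $v$ of $S$ and then reorganize both sides separately.

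First I would unpack the right-hand side. In a simple normal form, each piece is either a disk (Euler characteristic $1$) or an annulus (Euler characteristic $0$). Summing $\chi(v)$ over all pieces therefore gives exactly $v_d$, the number of disk-pieces, matching the expression already used in Lemma~\ref{lemma: Euler}.

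Next I would unpack the left-hand side. By the definition of the induced cost function, $c(v) = \sum_{\alpha} c(\alpha)$, where $\alpha$ ranges over the turns on the polygonal boundary of the piece corresponding to $v$, and $c(\alpha) = c(T)$ when $\alpha$ has type $T \in \Tcal$. In simple normal form each piece has a single polygonal boundary, and each turn of $S$ lies on the polygonal boundary of exactly one piece. Therefore
\begin{equation*}
\sum_v c(v) \;=\; \sum_{\alpha \text{ a turn of } S} c(\alpha) \;=\; \sum_{T \in \Tcal} c(T)\, t_T,
\end{equation*}
where in the last step we just grouped turns by type and used the definition of $t_T$.

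Combining these two reorganizations with the hypothesis $c(P) \ge \chi(P)$ applied piece by piece yields
\begin{equation*}
\sum_{T \in \Tcal} c(T)\, t_T \;=\; \sum_v c(v) \;\ge\; \sum_v \chi(v) \;=\; v_d,
\end{equation*}
which is the desired inequality. There is no real obstacle here: the lemma is essentially a bookkeeping statement packaging the weak-duality philosophy, and the only subtlety worth double-checking is that the double count over turns is accurate, which follows immediately from the fact that simple normal form guarantees exactly one polygonal boundary per piece.
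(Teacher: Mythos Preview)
Your proof is correct and follows essentially the same approach as the paper: sum the hypothesis $c(P)\ge\chi(P)$ over all pieces, observe that the right-hand side totals $v_d$ since $\chi$ is $1$ on disk-pieces and $0$ on annulus-pieces, and rewrite the left-hand side as $\sum_{T\in\Tcal} c(T)t_T$ by grouping turns by type.
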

\begin{proof}
    Note that $\chi(P)$ is either one or zero, depending on whether $P$ is a disk-piece or annulus-piece. Hence its sum over all pieces $P$ in the simple normal form is exactly $v_d$, the number of disk pieces. Hence by assumption we have
    $$\sum_P c(P) \ge v_d,$$
    where the sum is taken over all pieces in the simple normal form of the surface $S$.
    
    On the other hand, by definition $c(P)$ is itself the sum of $c(\alpha)$ over all turns $\alpha$ that appear in the piece $P$. By collecting turns of the same type, we see that 
    $$\sum_P c(P) =\sum_{T\in\Tcal} c(T) t_T.$$
    Hence the desired inequality follows.
\end{proof}

\begin{proposition}\label{prop: LP dual}
    If a cost function $c: \Tcal\to \R$ satisfies the requirement in Lemma \ref{lemma: cost and disk number} and $\sum_{T\in\Tcal} c(T) t_T=\lambda \deg(S)$ for any boundary-incompressible $w$-admissible surfaces $S$ in simple normal form, where $\lambda$ is a constant independent of $S$ (but possibly depending on $w$ or the underlying group), then 
    $$\lambda \deg(S)\ge v_d.$$
    As a consequence, we have 
    $$-\chi(S)\ge \left(\frac{|w|}{2}-\lambda\right)\deg(S)$$
    for all boundary-incompressible $w$-admissible surfaces $S$.
\end{proposition}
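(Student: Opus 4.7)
The proof is essentially a short chain of inequalities that combines the cost inequality of Lemma \ref{lemma: cost and disk number}, the hypothesis on the cost function, and the Euler-characteristic formula of Lemma \ref{lemma: Euler}, followed by a reduction to simple normal form via Lemma \ref{lemma: simple normla form}. So the plan is mostly bookkeeping rather than a new argument.

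First, for any boundary-incompressible $w$-admissible surface $S$ that is \emph{already} in simple normal form, I would directly apply Lemma \ref{lemma: cost and disk number} to obtain
$$\sum_{T\in\Tcal} c(T)\, t_T \ge v_d,$$
and then use the hypothesis $\sum_{T\in\Tcal} c(T)\, t_T = \lambda\deg(S)$ to rewrite the left-hand side, yielding the first inequality $\lambda \deg(S)\ge v_d$ of the proposition. Next, I would substitute this bound into the identity from Lemma \ref{lemma: Euler},
$$-\chi(S)=\tfrac{1}{2}\deg(S)\cdot |w|-v_d\ge \tfrac{|w|}{2}\deg(S)-\lambda\deg(S)=\Bigl(\tfrac{|w|}{2}-\lambda\Bigr)\deg(S),$$
which is exactly the consequence for surfaces in simple normal form.

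To extend the second inequality to arbitrary boundary-incompressible $w$-admissible surfaces $S$, I would invoke Lemma \ref{lemma: simple normla form} to produce a $w$-admissible surface $S'$ in simple normal form with $\deg(S')=\deg(S)$ and $-\chi(S')\le -\chi(S)$. The key point to verify is that the simplification steps (eliminating embedded loops in $f^{-1}(X_C)$, collapsing pieces with extra polygonal boundaries, and capping off trivial annulus-pieces) do not break boundary incompressibility. Each such operation is local and only removes handles or caps off nullhomotopic loops, so no new pair-of-pants subsurface exhibiting boundary compressibility is introduced. Granting this, the inequality applied to $S'$ gives $-\chi(S)\ge -\chi(S')\ge(\tfrac{|w|}{2}-\lambda)\deg(S')=(\tfrac{|w|}{2}-\lambda)\deg(S)$, as required.

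The only real conceptual content of this proposition is the duality reformulation itself—observing that constructing a cost function satisfying both a per-piece inequality (the \emph{primal feasibility} condition $c(P)\ge\chi(P)$) and a uniform linear identity in $\deg(S)$ (the \emph{dual objective}) suffices to produce a spectral-gap-type bound on $-\chi(S)/\deg(S)$. The main obstacle is therefore not in the present proof but is deferred: it is the explicit construction, in Theorem \ref{thm: HNN main}, of a cost function on $\Tcal$ simultaneously satisfying $c(P)\ge \chi(P)$ for every $P\in\Pcal$ and the linear identity $\sum_T c(T)t_T=\lambda\deg(S)$ with a favorable constant $\lambda$, the latter essentially requiring $c$ to be compatible with the pairing of turn types so that the sum reduces to a boundary count.
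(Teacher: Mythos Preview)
Your proof is correct and follows essentially the same approach as the paper's: apply Lemma~\ref{lemma: cost and disk number} together with the hypothesis to obtain $\lambda\deg(S)\ge v_d$, substitute into the Euler formula of Lemma~\ref{lemma: Euler}, and then reduce the general case to simple normal form via Lemma~\ref{lemma: simple normla form}. You are in fact slightly more careful than the paper, which simply asserts that one may put $S$ into simple normal form without explicitly commenting on why boundary incompressibility is preserved under the simplifications of Lemmas~\ref{lemma: no loop} and~\ref{lemma: simple normla form}; your observation that these operations leave the $w$-boundary untouched and introduce no new pair-of-pants configurations is the right justification.
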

\begin{proof}
    The first inequality is evident by the assumption and Lemma \ref{lemma: cost and disk number}. Combining this with formula (\ref{eqn: Euler}) we obtain the second inequality for any boundary-incompressible $w$-admissible surfaces $S$ in simple normal form. For a general boundary-incompressible $w$-admissible surfaces $S$, we can put it into simple normal form by Lemma \ref{lemma: simple normla form}.
\end{proof}

\begin{remark}
	It might be unclear at first glance if there are cost functions with $\sum_{T\in\Tcal} c(T) t_T =\lambda \deg(S)$ for all $S$. Actually, there are many such functions. To see one of them, note that $\deg(S)$ is a linear function in variables $(t_T)_{T\in \Tcal}$ since the degree is a constant multiple (independent of $S$) of the total number of turns. 
	To get more such functions, note that changing $c(T)$ and $c(T')$ leaving $c(T)+c(T')$ invariant does not change $\sum_{T\in\Tcal} c(T) t_T$, for any paired turn types $T,T'\in\mathcal{T}$.	
\end{remark}

\section{A lower bound of the minimal complexity}\label{sec: lower bound}
As in Section \ref{sec: normal form}, let $H=A\star_C$ be the HNN extension associated to injections $i_P,i_N:C\inj A$. In this section, we focus on a cyclically reduced word $w$ taking the special form $$w=a_1t^{-1} b_1 t a_2 t^{-1} b_2 t \cdots a_m t^{-1} b_m t x t \in H,$$ 
where $m\ge1$, $x\in A$, $a_i\in A\setminus i_P(C)$ and $b_i\in A\setminus i_N(C)$ under the standard presentation (\ref{eqn: std presentation}). The goal is to prove Theorem \ref{thm: HNN main} below, establishing a lower bound of the minimal complexity of $w$-admissible boundary-incompressible surfaces. It is a somewhat standard trick (Lemma \ref{lemma: trick}) to reduced the case of a general word (Theorem \ref{thmA: main}) with $t$-exponent sum $\pm 1$ to this special case.

The assumptions of Theorem \ref{thm: HNN main} involve two conditions, which we now introduce. 
\begin{definition}\label{def: n-RF}
	Given a subgroup $C\le A$, for some $2\le n\le \infty$, an element $a\in A\setminus C$ is \emph{length-$n$ relatively free to $C$ ($n$-RF)} if 
	$a^{e_1}c_1\cdots a^{e_k}c_k\neq id$
	in $A$ for any $k\in\Z_+$, $e_i=\pm1$, and $c_i\in C$, provided that 
	\begin{enumerate}
		\item $c_i\neq id$ for any $i$ with $e_i=-e_{i+1}$ (indices taken mod $k$), and
		\item there are no $n$ $e_i$'s of the same sign.
	\end{enumerate}

	We say the pair $(A,C)$ is $n$-RF if $a$ is $n$-RF rel $C$ for all $a\in A\setminus C$.
\end{definition}

If $C$ is the trivial subgroup, then $a\in A\setminus C$ is $n$-RF rel $C$ if and only if $a$ has order at least $n$.

Roughly speaking, the $n$-RF condition requires that there is no short relation (measured by the quantifier $n$) among $a$ and $C$.
In particular, if the subgroup generated by $a$ and $C$ is isomorphic to $\langle a\rangle\star C$, where $\langle a\rangle$ is the cyclic subgroup of $A$ generated by $a$, then $a$ is $n$-RF, where $2\le n\le\infty$ is the order of $a$.
Taking $n=\infty$, it is easy to see that $a$ is $\infty$-RF if and only if the subgroup generated by $a$ and $C$ is (naturally) isomorphic to $\Z\star C$, and such $a$ is said to be \emph{free relative to $C$}; see \cite[Theorem 4.1]{FennRourke}.


A weaker condition only restricts relations in which all exponents of $a$ have the same sign.
\begin{definition}\label{def: n-RTF}
	Given a subgroup $C\le A$ and $2\le n\le\infty$, we say $a\in A\setminus C$ is \emph{$n$-relatively torsion-free ($n$-RTF)} in the group-subgroup pair $(A,C)$ if $ac_1\cdots a c_k\neq id$ for any $c_i\in C$ and any $1\le k<n$. Note that this is automatically true if $n=2$ as $a\notin C$.
	
	We say the pair $(A,C)$ is $n$-RTF if $a$ is $n$-RTF for all $a\in A\setminus C$.
\end{definition}

Clearly if $a\in A\setminus C$ is $n$-RF rel $C$ then it is also $n$-RTF.

If $C$ is a normal subgroup, then $a\in A\setminus C$ is $n$-RTF rel $C$ if and only if its image in $A/C$ has order at least $n$. In particular, when $C$ is trivial, being $n$-RF and $n$-RTF are equivalent.

The $n$-RTF condition holds in many examples (even for pairs $(A,C)$), for instance maximal cyclic subgroups are $\infty$-RTF in surface groups and right-angled Artin groups \cite[Example 3.14 and Lemma 3.15]{CH:sclgap}. It also has nice inheritance properties in the context of graphs of groups and graph products; see \cite[Section 3.4 and Lemma 5.4]{CH:sclgap} for more examples and details on this condition.

\begin{theorem}\label{thm: HNN main}
    With the notation above, for $w=a_1t^{-1} b_1 t a_2 t^{-1} b_2 t \cdots a_m t^{-1} b_m t x t \in H=A\star_C$, suppose for some $2\le n\le\infty$ we have:
    \begin{enumerate}
        \item $a_1$ is $n$-RF rel $i_P(C)$ and $b_m$ is $n$-RF rel $i_N(C)$, and
        \item each $a_i$ is $n$-RTF in $(A, i_P(C))$ and each $b_i$ is $n$-RTF in $(A,i_N(C))$ for all $1\le i\le m$. 
    \end{enumerate}
    Then for any $w$-admissible boundary-incompressible surface $S$, we have
    $$-\chi(S)\ge \left(1-\frac{1}{n}\right)\deg(S).$$

\end{theorem}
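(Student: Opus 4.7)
The plan is to apply the LP-duality framework of Proposition \ref{prop: LP dual}: since $|w| = 2m+1$, it suffices to construct a cost function $c: \Tcal \to \R$ on admissible turn types such that (i) every piece $P \in \Pcal$ satisfies $c(P) \ge \chi(P)$ and (ii) the weighted turn count equals $\lambda \deg(S)$ for $\lambda = m - \tfrac{1}{2} + \tfrac{1}{n}$; the desired bound $-\chi(S) \ge (1-\tfrac{1}{n})\deg(S)$ then follows. First I would record the structural data: under the given cyclically reduced form of $w$, the arcs $\gamma_i$ split as $PP$-type for odd $i \le 2m-1$ (carrying $a_{(i+1)/2} \in A \setminus i_P(C)$, pinned to the $P$-side), $NN$-type for even $i \le 2m$ (carrying $b_{i/2} \in A \setminus i_N(C)$, pinned to the $N$-side), and $\gamma_{2m+1}$ is the unique side-mixing arc, of $PN$-type (carrying $x$). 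Consequently, each polygonal boundary of a piece decomposes into maximal stretches lying entirely on one side of $X_C$, separated by occurrences of $\gamma_{2m+1}^{\pm 1}$.

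To define $c$, I would parameterize costs per arc-side: for each arc $\alpha \in \{\gamma_i, \bgamma{i}\}_{i=1}^{2m+1}$ assign reals $c^-(\alpha), c^+(\alpha)$ (the costs of its initial and terminal sides) and set $c(T) := c^+(\alpha) + c^-(\alpha')$ for every admissible turn $T = (\alpha, c, \alpha')$. By Lemma \ref{lemma: half degree} each $\gamma_i$-arc and each $\bgamma{i}$-arc appears exactly $\deg(S)/2$ times in $S$, so $\sum_T c(T)\, t_T$ simplifies to
\[
\tfrac{\deg(S)}{2}\sum_{i=1}^{2m+1}\bigl[c^+(\gamma_i)+c^-(\gamma_i)+c^+(\bgamma{i})+c^-(\bgamma{i})\bigr],
\]
reducing condition (ii) to the single linear normalization $\sum_i[c^+(\gamma_i)+c^-(\gamma_i)+c^+(\bgamma{i})+c^-(\bgamma{i})] = 2m-1+\tfrac{2}{n}$, with substantial freedom in the individual arc-side values.

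The heart of the proof is verifying condition (i). The winding class in $A$ of any polygonal boundary is a word alternating arc-letters (elements of $A$) with turn-windings (in $i_P(C)$ or $i_N(C)$ according to the side). A disk-piece corresponds to a trivial winding class, i.e., a relation in $A$. Boundary incompressibility (Lemma \ref{lemma: no backtracking}) forbids trivial-winding backtracks $(\gamma_i, \mathrm{id}, \bgamma{i})$, so the $n$-RTF hypothesis on each $a_i$, $b_i$ forces any such relation involving only one $a_i$- or $b_i$-arc to use at least $n$ copies of it, while the $n$-RF hypothesis on $a_1$ and $b_m$ handles mixed-sign relations passing through $\gamma_1^{\pm 1}$ or $\gamma_{2m}^{\pm 1}$. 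I would calibrate the arc-side costs so that each generic $a_i$- or $b_i$-arc contributes roughly $\tfrac{1}{n}$ to the total, with the distinguished arcs $\gamma_1^{\pm 1}$ and $\gamma_{2m}^{\pm 1}$ carrying enough extra weight (from the stronger $n$-RF hypothesis) to lift the total of any trivial relation to $\ge 1$ while keeping all non-trivial (annulus-piece) winding classes at non-negative cost. The $PN$-type arc $\gamma_{2m+1}^{\pm 1}$ is handled by splitting its cost symmetrically between $P$- and $N$-side contributions, so that it absorbs side transitions at appropriate rates without destabilizing either side's analysis.

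The main obstacle is this simultaneous calibration: one must fix arc-side values so that every admissible disk-piece polygonal boundary --- across every possible interleaving of arc types, turn windings, and side transitions via $\gamma_{2m+1}^{\pm 1}$ --- accumulates cost $\ge 1$, every annulus-piece cost $\ge 0$, and the normalization above holds exactly. The interplay of the $n$-RF and $n$-RTF hypotheses with boundary incompressibility and the rigid $PP/NN/PN$ decomposition of arcs is precisely what makes this balancing achievable.
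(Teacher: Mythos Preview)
Your plan to use a \emph{separable} cost function $c(\alpha,\kappa,\alpha')=c^+(\alpha)+c^-(\alpha')$ is appealing because condition (ii) then comes for free, but the class of separable costs is too restrictive to satisfy condition (i) in the generality of Theorem~\ref{thm: HNN main}. Here is a concrete obstruction. Take $m=1$, a nontrivial edge group $C$, and choose $x\in A$ so that $x\,i_N(c)\,x^{-1}\in i_P(C)$ for some $c\neq id$ (for instance $A=F_2=\langle a,b\rangle$, $i_P(C)=i_N(C)=\langle b\rangle$, $a_1=b_1=a$, $x=id$; all hypotheses hold for every finite $n\ge 2$). Then the two-arc polygonal boundary $(x,\,x^{-1})$ with nontrivial turn windings is a genuine disk piece, forcing $d(x)+d(x^{-1})\ge 1$ where $d(\alpha):=c^+(\alpha)+c^-(\alpha)$. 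On the other hand, the $n$-RTF disks built from $n$ copies of a single arc force $d(a_1),d(a_1^{-1}),d(b_1),d(b_1^{-1})\ge 1/n$. Summing, $\sum_\alpha d(\alpha)\ge 1+4/n$, whereas your normalization requires $\sum_\alpha d(\alpha)=2m-1+2/n=1+2/n$. This is a contradiction for every finite $n$. So no separable cost function can work for such $w$.

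The paper's cost function is deliberately \emph{non}-separable: for instance $c(\gamma_i,\gamma_j)$ depends on whether $i<j$ or $i\ge j$, which cannot be written as $c^+(\gamma_i)+c^-(\gamma_j)$. The mechanism that replaces your automatic normalization is the gluing condition $t_T=t_{T'}$ for paired turn types $T,T'$: because $(x^{-1},\kappa,x)$ is paired with $(b_m,\kappa^{-1},b_m^{-1})$ and $(x,\kappa,x^{-1})$ with $(a_1^{-1},\kappa^{-1},a_1)$, the paper can load cost $1$ onto $(x^{-1},x)$ while putting cost $0$ on $(b_m,b_m^{-1})$, and compensate with a \emph{negative} cost $2/n-1$ on $(a_1^{-1},a_1)$. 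These asymmetric assignments are invisible to $\sum c(T)t_T$ (which only sees $c(T)+c(T')$) but are essential to making condition (i) hold in Cases~I--III. Your arc-side decomposition forces $c(T)+c(T')$ to split symmetrically, which is exactly what breaks.
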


It is worth noting that only the $n$-RTF assumption is needed to prove the analogous estimate (\cite[Theorem 3.8]{CH:sclgap}) in the context of stable commutator length in a graph of groups. 

The following corollary explains how estimates of the complexity of $w$-admissible surfaces can be applied to obtain injectivity of subgroups under the quotient map. This corollary slightly generalizes a result of Fenn--Rourke \cite[Theorem 4.1]{FennRourke} carefully explaining and generalizing Klyachko's method \cite{Klyachko}: In their statement each $a_i$ (resp. $b_i$) is assumed to be $\infty$-RF rel $i_P(C)$ (resp. $i_N(C)$), while we only need this for $a_1, b_m$ and the weaker $\infty$-RTF condition on the other $a_i$'s and $b_i$'s; see Example \ref{example: weaker assumption} below.
Klyachko's Theorem \ref{thm: Klyachko} and other Freiheitssatz theorems quickly follow from this result after applying a standard algebraic trick (Lemma \ref{lemma: trick}), which we explain in Section \ref{sec: app}.

\begin{corollary}\label{cor: injetivity for special w}
	For the HNN extension $H=A\star_{C}$ and the word $w$ satisfying the assumptions in Theorem \ref{thm: HNN main} with $n=\infty$, the natural map $A\to H/\llangle w\rrangle$ induced by the inclusion $A\inj H$ is injective.
\end{corollary}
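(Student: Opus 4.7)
The plan is to derive a contradiction by translating a hypothetical failure of injectivity into a $w$-admissible surface whose topological complexity falls short of the lower bound supplied by Theorem \ref{thm: HNN main}. The entire argument is a direct application; the essential content lives in Theorem \ref{thm: HNN main} and the topological dictionary of Example \ref{example: adm surf from relations}.

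First I would assume for contradiction that some $a \in A$ with $a \neq id_A$ is killed by the quotient map $A \to H/\llangle w \rrangle$. Then $a$ lies in the normal closure $\llangle w\rrangle$, so following equation (\ref{eqn: relation}) one can write
$$a = (h_1 w^{n_1} h_1^{-1}) \cdots (h_k w^{n_k} h_k^{-1})$$
for some $k \geq 1$, nonzero integers $n_i$, and $h_i \in H$. I would choose such an expression minimizing $k$.

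Next, invoke Example \ref{example: adm surf from relations} to realize this expression as a $w$-admissible surface $f: S \to X$, where $S$ is a sphere with $k+1$ open disks removed. One boundary component is an $A$-boundary mapping to the conjugacy class of $a$, while the remaining $k$ components are $w$-boundaries representing $w^{n_i}$. By the minimality of $k$, any boundary compression of $S$ would yield an expression of $a$ of the same form with strictly fewer conjugates, contradicting minimality; hence $S$ is boundary incompressible. A direct count gives $-\chi(S) = (k+1) - 2 = k - 1$ and $\deg(S) = \sum_{i=1}^{k} |n_i| \geq k$, since each $|n_i| \geq 1$.

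Finally, the hypotheses of the corollary are precisely the hypotheses of Theorem \ref{thm: HNN main} with $n = \infty$, so the theorem applies to $S$ and gives
$$k - 1 \;=\; -\chi(S) \;\geq\; \deg(S) \;\geq\; k,$$
which is absurd. Hence no such $a$ can exist, establishing injectivity. There is no real obstacle in this deduction; the only point that deserves care is the justification of boundary incompressibility from the minimality of $k$, which is already recorded in Example \ref{example: adm surf from relations}.
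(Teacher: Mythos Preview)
Your argument is correct and follows essentially the same route as the paper's proof: assume some $a\neq id$ dies, realize a minimal-$k$ expression as a boundary-incompressible $w$-admissible planar surface via Example~\ref{example: adm surf from relations}, and contradict the bound from Theorem~\ref{thm: HNN main} with $n=\infty$. Your version is in fact slightly more careful than the paper's, which writes $\deg(S)=k$ whereas you correctly note $\deg(S)=\sum_i |n_i|\ge k$; either way the inequality $k-1\ge k$ gives the desired contradiction.
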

\begin{proof}
	Suppose the natural map is not injective, that is, there is some $a\neq id \in A$ that lies in $\llangle w \rrangle$. As in Example \ref{example: adm surf from relations} (with $H=A\star\Z$), this gives rise to an equation (\ref{eqn: relation}), which provides a $w$-admissible surface $S$ of degree $\deg(S)=\sum_{i=1}^k |n_i|\ge k$ with $-\chi(S)=k-1$ (as it is a sphere with $k+1$ disks removed) for some $k\in\Z_+$ and $n_i\in\Z\setminus\{0\}$. Moreover, as explained in Example \ref{example: adm surf from relations}, when $k$ is minimal among all equations of this form, $S$ is boundary-incompressible. Hence by Theorem \ref{thm: HNN main} (with $n=\infty$), we have $k-1=-\chi(S)\ge\deg(S)\ge k$, which leads to a contradiction. Thus the natural map must be injective.
\end{proof}

\begin{example}\label{example: weaker assumption}
	As a simple example distinguishing the $n$-RF and $n$-RTF conditions, consider $G=\Z^2$ with standard generators $x,y$ and $C=\langle y\rangle$. Then $x$ is $\infty$-RTF rel $C$ but it is not $n$-RF rel $C$ for any $n\ge2$ due to the relation $xyx^{-1}y^{-1}=id$.
	
	Now consider the HNN extension $H=A\star_{\Z}$ with $A=\Z^2\star \Z=\langle x,y,z \mid xy=yx\rangle$, where $i_P$ and $i_N$ take a chosen generator of $\Z$ to $x$ and $y$ respectively. Then the word $w=zt^{-1}x t y t^{-1} z t^2$ satisfies our assumptions in Corollary \ref{cor: injetivity for special w} since $z$ is free relative to both $\langle x\rangle$ and $\langle y\rangle$, and $x$ is $\infty$-RTF rel $\langle y\rangle$ and similarly exchanging $x$ and $y$. However, just as in the $\Z^2$ case, $x$ is not even $2$-RF rel $\langle y\rangle$, so the assumptions in \cite[Theorem 4.1]{FennRourke} do not hold in this case.
\end{example}

In the rest of this section, we prove Theorem \ref{thm: HNN main} using the LP-duality method introduced in Section \ref{sec: duality}. We first define a cost function $c:\mathcal{T}\to \R$ and then verify the desired properties.

Note that in this case the tight loop $\gamma$ corresponding to $w$ is decomposed into $|w|=2m+1$ arcs, which we denote suggestively by $\gamma_1=a_1, \gamma_2=b_1,\cdots, \gamma_{2m-1}=a_m, \gamma_{2m}=b_m, \gamma_{2m+1}=x$. Denote the arcs on $\bar{\gamma}$ by $\gamma_{2m+1}^{-1}=x^{-1}, \gamma_{2m}^{-1}=b^{-1}_m, \gamma_{2m-1}^{-1}=a^{-1}_m,\cdots, \gamma_{2}^{-1}=b^{-1}_1, \gamma_{1}^{-1}=a^{-1}_1$. The arcs $a_i^{\pm 1}$ (resp. $b_i^{\pm 1}$) are of type $PP$ (resp. $NN$), and the arc $x$ (resp. $x^{-1}$) is the only arc of type $PN$ (resp. $NP$).

A key observation here is that there is no admissible turn going from any $a_i^{\pm 1}$ to $b_j^{\pm 1}$ or vice versa. This is indicated in the directed graph in Figure \ref{fig: turngraph} for $m=2$, where any admissible turn type $(\gamma_i^{\pm 1},\kappa,\gamma_j^{\pm 1})$ for some $\kappa\in C$ has the ordered pair $(\gamma_i^{\pm 1},\gamma_j^{\pm 1})$ represented as an oriented edge.

\begin{figure}
	\labellist
	\small \hair 2pt
	\pinlabel $x$ at -5 128
	\pinlabel $x^{-1}$ at 406 130
	
	\pinlabel $a_1$ at 117 210
	\pinlabel $a_2$ at 176 210
	\pinlabel $a_2^{-1}$ at 240 213
	\pinlabel $a_1^{-1}$ at 300 213
	
	\pinlabel $b_1$ at 117 45
	\pinlabel $b_2$ at 176 45
	\pinlabel $b_2^{-1}$ at 240 47
	\pinlabel $b_1^{-1}$ at 300 47
	
	\pinlabel $PP$ at 40 210
	\pinlabel $NN$ at 40 45
	\pinlabel $\text{from }a_i$ at 30 155
	\pinlabel $\text{from }a_i^{-1}$ at 100 157
	\pinlabel $\text{to }a_i$ at 285 157
	\pinlabel $\text{to }a_i^{-1}$ at 385 155
	\endlabellist
	\centering
	\includegraphics[scale=0.8]{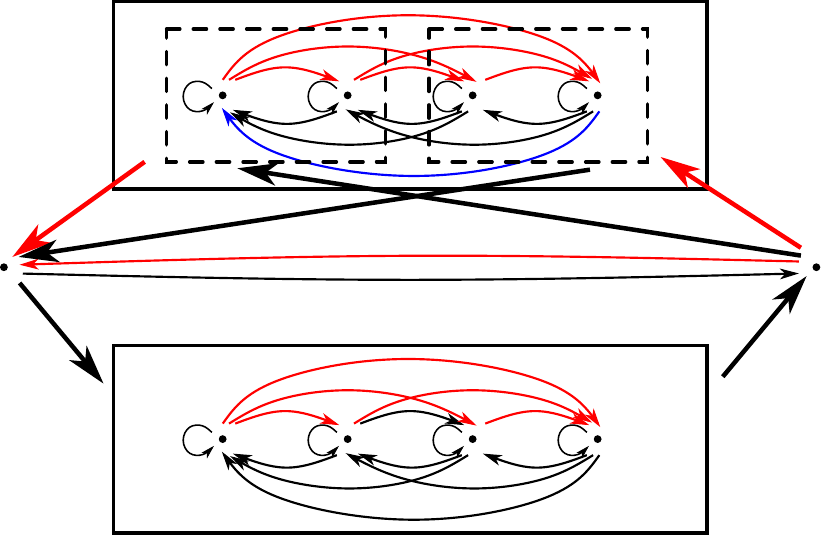}
	\caption{The directed graph encoding admissible turns between arcs, where the two solid rectangular boxes enclose all arcs of type $PP$ and $NN$ respectively. Each of the six thick big arrows represents a collection of edges connecting the vertex represented by $x$ or $x^{-1}$ with vertices in a rectangular box. Under the cost function $c$, red edges have cost $1$ when $n=\infty$, and the blue edge is the only one with negative cost. Note that $\gamma_1=a_1$, $\gamma_2=b_1$, $\gamma_3=a_2$, $\gamma_4=b_2$, and similarly for their inverses.}\label{fig: turngraph}
\end{figure}

\subsection{The cost function}
We define the cost function in a way so that the cost of an admissible turn type $(\gamma_i^{\pm 1},\kappa,\gamma_j^{\pm 1})\in\Tcal$ only depends on the ordered pair $(\gamma_i^{\pm 1},\gamma_j^{\pm 1})$. Hence we will simply define the cost $c(\gamma_i^{\pm 1},\gamma_j^{\pm 1})$ below for all $1\le i,j\le 2m+1$. To simplify the notation, we write $c_{i,j}=c(\gamma_i,\gamma_j)$, $c_{i,-j}=c(\gamma_i,\gamma_j^{-1})$, and similarly for $c_{-i,j}$ and $c_{-i,-j}$.
The cost for some pairs (say $(a_1,b_1)$) is irrelevant if the pair does not appear in any admissible turn type.

\def\arraystretch{1.5}

For $1\le i, j\le 2m+1$, we define
\begingroup
\allowdisplaybreaks
\begin{align*}
c_{i,j} &=\left\{
\begin{array}{cl}
    1-\frac{1}{n},   & i<j; \\
    \frac{1}{n},     & j\le i<2m+1;\\
    0,               & i=2m+1.
\end{array}\right.\\
c_{i,-j} &=\left\{
\begin{array}{cl}
    0,              & i=2m+1;\\
    0,              & j=2m+1;\\
    0,              & i=j=2m;\\
    1-\frac{1}{n},   & \text{otherwise}.\\
\end{array}\right.\\
c_{-i,j} &=\left\{
\begin{array}{cl}\refstepcounter{equation}\tag{\arabic{section}.\arabic{equation}}\label{eqn: cost}
    1,              & i=j=2m+1;\\
    \frac{2}{n}-1,  & i=j=1;\\
    \frac{1}{n},   & \text{otherwise}.\\
\end{array}\right.\\
c_{-i,-j} &=\left\{
\begin{array}{cl}
    1-\frac{1}{n},   & i>j; \\
    \frac{1}{n},     & i\le j<2m+1;\\
    0,               & j=2m+1.
\end{array}\right.
\end{align*}
\endgroup

In the case $n=\infty$, turns with cost $1$ are represented by red edges in Figure \ref{fig: turngraph} illustrating the case of $m=2$, where one can observe that most oriented loops contain at least one such edge.
The reason to choose this cost function becomes clearer if one restricts attention to the cost of turns among $b_i$'s (resp. $a_i$'s); see the red edges in the rectangular boxes in Figure \ref{fig: turngraph} as well as Sections \ref{subsubsec: case 1} and \ref{subsubsec: case 2} below. 

\subsection{Comparison with $\chi$}
In this section, we prove the following lemma to verify one of the conditions in Proposition \ref{prop: LP dual}. 
\begin{lemma}\label{lemma: comparison with chi}
    For any piece $P\in\Pcal$ we have $c(P)\ge \chi(P)$.
\end{lemma}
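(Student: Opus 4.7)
The plan is to prove $c(P) \geq \chi(P)$ by decomposing the cost of the polygonal boundary of $P$ into local contributions using the side-matching combinatorics of admissible turns, and invoking the $n$-RF and $n$-RTF hypotheses only where purely combinatorial reasoning fails. Since $\chi(P) \in \{0, 1\}$, the two regimes to handle are the annulus bound $c(P) \geq 0$ and the disk bound $c(P) \geq 1$ (where the winding class of $\partial P$ is trivial in $A$).

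For the annulus bound I would use a potential-function (telescoping) argument. Define $\Phi(a_1^{-1}) = 1 - 2/n$ and $\Phi(\alpha) = 0$ for every other arc, and consider the adjusted cost $c'(\alpha, \beta) = c(\alpha, \beta) + \Phi(\alpha) - \Phi(\beta)$, which has the same cyclic sum as $c$ by telescoping. A case check against the cost values in \eqref{eqn: cost} shows $c'(\alpha, \beta) \geq 0$ for every admissible transition (the only transition with negative base cost, $a_1^{-1} \to a_1$, becomes $c' = 0$ after adjustment, and all other transitions only gain positive contributions), so $c(P) \geq 0$ for any piece $P$, proving the annulus bound.

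For the disk bound, the heart of the argument is the case of a single ``$P$-block'' polygon (all $a$-arcs, no $x$ or $x^{-1}$ switches; the dual $N$-block case is symmetric under the $a_1 \leftrightarrow b_m$ duality of the cost formulas). Let $k$ be the length of the cycle, let $p_\pm$ be the number of arcs of each sign (so $p_+ + p_- = k$), and let $r$ be the number of sign changes along the cycle. If the underlying indices $p_j$ are all equal to some $p$, a direct enumeration by transition type gives the closed forms $c(P) = (k-r)/n$ when $p = 1$ and $c(P) = r + (k-2r)/n$ when $p > 1$. In the first subcase the $n$-RF assumption on $a_1$, combined with boundary incompressibility (Lemma~\ref{lemma: no backtracking}), forces $\max(p_+, p_-) \geq n$, hence $r \leq k - n$ and $c(P) \geq 1$. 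In the second subcase the $n$-RTF assumption on $a_p$ forces either $r \geq 1$ or $k \geq n$; combined with the general bound $r \leq k/2$ this yields $c(P) \geq 1$. When the indices $p_j$ are not all equal, the cyclic sequence contains at least one strict ascent $(p_j < p_{j+1})$, and the elementary inequality $A(n-2) + k \geq n$ (valid for $A \geq 1$, $n \geq 2$, and $k \geq 2$) applied to the number $A$ of ascents implies $c(P) \geq 1$ directly from the cost structure, with no $n$-RF/RTF hypothesis needed.

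Finally I would reduce the general case (with $x$ and/or $x^{-1}$ present) to the single-block case. The key observation is that the inter-block transitions $a_p \to x$, $x^{-1} \to a_q^{-1}$, and $x^{-1} \to x$ each cost $1 - 1/n$ or $1$, which pays for the Euler characteristic jump when a block boundary is crossed; after ``absorbing'' each $x$-switch by $t$-conjugation of the adjacent winding, the remaining block can be analyzed as a shorter single-block polygon with a modified winding class. The main obstacle I anticipate is the careful bookkeeping in degenerate configurations: empty blocks arising from consecutive $xx^{-1}$ or $x^{-1}x$, polygons whose arcs are almost entirely $x^{\pm 1}$, and the subtle interaction of $a$-blocks and $b$-blocks through $x$-transitions. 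I expect the final step to require either an induction on the number of $x$-switches or a discharging argument redistributing block contributions to adjacent $x$-transitions, with the extremal configurations matching precisely those coming from low-degree boundary-incompressible surfaces.
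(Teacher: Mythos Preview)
Your potential-function argument for the annulus bound $c(P)\ge 0$ is correct and in fact cleaner than what the paper does: the paper never isolates this step and instead verifies nonnegativity case by case inside each of its three lemmas. Your single-index block analysis (all arcs $a_p^{\pm1}$ for fixed $p$, or dually all $b_p^{\pm1}$) is also correct and uses the $n$-RF/$n$-RTF hypotheses in exactly the same places the paper does.

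There are two genuine gaps. First, your treatment of a pure $a$-block with \emph{unequal} indices is not right as stated. An ``ascent'' measured by the raw index $p_j<p_{j+1}$ does not control the cost: for example $c(a_1^{-1},a_2)=c_{-1,3}=1/n$, not $1-1/n$, so index-ascents can be cheap. The paper's fix is to use the linear order
\[
a_1\prec a_2\prec\cdots\prec a_m\prec a_m^{-1}\prec\cdots\prec a_1^{-1},
\]
under which every $\prec$-ascending edge (among $a$'s) costs $1-1/n$ and every $\prec$-descending edge costs $1/n$, with the single anomaly $(a_1^{-1},a_1)$ at $2/n-1$. With this order the multi-vertex case is immediate: any loop touching two distinct $\prec$-vertices contains at least one $\prec$-ascent and one $\prec$-descent, and one then only needs to neutralise copies of the anomalous edge, which the paper does by pairing each $(a_1^{-1},a_1)$ with a subpath from $a_1$ to $a_1^{-1}$ (Lemma~\ref{lemma: a1 to A1}).

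Second, and more seriously, your reduction for the mixed case (polygons containing $x$ and $x^{-1}$) is not a proof yet, and the mechanism you sketch is off. You cannot ``absorb an $x$-switch by $t$-conjugation of the adjacent winding'': $x$ and $x^{-1}$ are genuine arcs on the polygonal boundary, not transitions, so the block structure does not collapse to a shorter single-block polygon with a modified winding class. Moreover, not all inter-block transitions are expensive; $c(a_p^{-1},x)=c(x^{-1},a_q)=1/n$, so you cannot simply charge one unit to each boundary crossing. The paper's approach here is direct and avoids any induction or discharging: it observes (Lemma~\ref{lemma: three cases}) that such a polygon decomposes into alternating paths $x^{-1}\to\cdots\to x$ through $a$'s and $x\to\cdots\to x^{-1}$ through $b$'s, shows the $b$-paths have nonnegative cost (only $(a_1^{-1},a_1)$ is ever negative), and proves the key Lemma~\ref{lemma: X to x} that every $x^{-1}\to x$ path through $a$'s costs at least $1$. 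That last lemma is where the work is, and it again relies on the $\prec$-order together with Lemma~\ref{lemma: a1 to A1}; this is what your proposal is missing.
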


Recall that the polygonal boundary of any piece $P$ is a cyclic sequence of arcs connected by admissible turns. Thus we can view it as an oriented loop in the graph $\Gamma$, where vertices are arcs and oriented edges are admissible turns, shown in Figure \ref{fig: turngraph}. 
Each oriented edge of the loop has a cost according to the definition of $c$ above. We define the cost of any oriented path (and loop) as the sum of the cost of its edges.

The goal is to show that the total cost of the loop corresponding to the polygonal boundary of $P$ is non-negative, and moreover no less than $1$ if $P$ is a disk-piece (i.e. the polygonal boundary has trivial winding class).

We have the following basic observations.
\begin{lemma}\label{lemma: three cases}
    Any oriented loop in $\Gamma$ falls into one of the following three types:
    \begin{enumerate}
        \item It is a loop supported on $b_i^{\pm 1}$'s.
        \item It is a loop supported on $a_i^{\pm 1}$'s.
        \item It passes through both $x$ and $x^{-1}$, and it contains at least one path from $x^{-1}$ to $x$ through $a_i^{\pm1}$'s.
    \end{enumerate}
\end{lemma}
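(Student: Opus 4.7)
The plan is to read off the constraints on edges of $\Gamma$ directly from the four-type decomposition $PP,NN,PN,NP$ of arcs, and then do a simple case analysis on whether a given oriented loop meets the distinguished vertices $x$ and $x^{-1}$.

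First I would record the outgoing neighborhoods of each vertex type. Because a turn must lie on a single side of $X_C$, the tail of an edge in $\Gamma$ must end on the same side of $X_C$ that the head starts from. Since each $a_i^{\pm1}$ has type $PP$, each $b_i^{\pm1}$ has type $NN$, $x$ has type $PN$, and $x^{-1}$ has type $NP$, the allowed outgoing edges are exactly
\begin{equation*}
a_i^{\pm1}\to a_j^{\pm1}\text{ or }x,\qquad x\to b_j^{\pm1}\text{ or }x^{-1},\qquad b_i^{\pm1}\to b_j^{\pm1}\text{ or }x^{-1},\qquad x^{-1}\to a_j^{\pm1}\text{ or }x.
\end{equation*}
In particular there is no edge between any $a_i^{\pm1}$ and any $b_j^{\pm1}$, and $x^{-1}$ is the unique way to leave the $b$-region, while $x$ is the unique way to leave the $a$-region.

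Next I would run the case analysis. Let $L$ be an oriented loop in $\Gamma$. If $L$ never visits $\{x,x^{-1}\}$, then by the outgoing neighborhoods above $L$ is contained either entirely in $\{a_i^{\pm1}\}$ or entirely in $\{b_i^{\pm1}\}$, giving type (1) or (2). Otherwise suppose $L$ visits $x$ (the case of $x^{-1}$ is symmetric). Then just after this visit $L$ enters $\{b_j^{\pm1}\}\cup\{x^{-1}\}$, and from $\{b_j^{\pm1}\}$ the only way to leave is through $x^{-1}$; hence $L$ visits $x^{-1}$ as well, placing us in type (3).

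Finally I would confirm the subpath claim. List the visits of $L$ to $\{x,x^{-1}\}$ in cyclic order; by the outgoing-neighborhood restrictions, these visits must strictly alternate between $x$ and $x^{-1}$, with every $x$-to-$x^{-1}$ segment lying in $\{b_i^{\pm1}\}$ and every $x^{-1}$-to-$x$ segment lying in $\{a_i^{\pm1}\}$ (with the degenerate possibility that the segment is a single edge). Take any occurrence of $x$ in $L$ and follow $L$ forward through the next $x^{-1}$, then through the subsequent $a$-segment, arriving at the next $x$, and finally through another $b$-segment to reach an $x^{-1}$; this subpath goes from $x$ to $x^{-1}$ and contains the $a$-segment, as desired.

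I do not expect any real obstacle; the argument is entirely combinatorial and reduces to reading off local constraints from the $PP/NN/PN/NP$ labels and then tracking alternation. The only point that requires care is to make sure that the degenerate segments (direct edges $x\to x^{-1}$ or $x^{-1}\to x$) are handled, but these are allowed and do not disturb the alternation pattern used to locate the required subpath.
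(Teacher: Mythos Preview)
Your argument is correct and follows the same route as the paper: both read off the edge constraints from the $PP/NN/PN/NP$ types, split into cases according to whether the loop meets $\{x,x^{-1}\}$, and derive the alternation when it does. Your final paragraph is more elaborate than needed---the alternation you already established yields directly an $x^{-1}$-to-$x$ segment through the $a_i^{\pm1}$'s (the paper's statement of item~(3) has $x$ and $x^{-1}$ interchanged relative to how it is actually used later; cf.\ the proof of the Case~III estimate), so the longer concatenated subpath you assemble is unnecessary.
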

\begin{proof}
    As we observed earlier, $x^{-1}$ is the only arc of type $NP$ and $x$ is the only arc of type $PN$, and there is no oriented edge from $a_i^{\pm 1}$ to $b_j^{\pm 1}$ and vice versa; see Figure \ref{fig: turngraph}. Thus either the loop passes through both $x$ and $x^{-1}$ or it is disjoint from both. 
    
    In the latter case, the loop is supported either only on $a_i^{\pm 1}$'s or only on $b_i^{\pm 1}$'s. These are the first two cases in the lemma.
    
    In the other case, there are finitely many $x$'s and $x^{-1}$'s on the loop, and they must alternate as there is no edge from any $a_i^{\pm 1}$ to $x^{-1}$ or from any $b_i^{\pm1}$ to $x$. Hence there must be a path from $x^{-1}$ to $x$ through a bunch of $a_i^{\pm 1}$'s, which is the last case of the lemma.
\end{proof}

We prove Lemma \ref{lemma: comparison with chi} by examining these three cases respectively. In the process, we will use the following basic observation repeatedly as our (only) way of using the $n$-RF condition.

\begin{lemma}\label{lemma: use n-RF}
	Consider a graph with two vertices $u$ and $v$ and all four possible distinct oriented edges. Suppose for some $2\le n\le\infty $, 
	\begin{enumerate}
		\item both edges $(u,u)$ and $(v,v)$ have cost at least $1/n$, and 
		\item the sum of the cost of $(u,v)$ and $(v,u)$ is at least $1/n$. 
	\end{enumerate}
	Then any oriented loop visiting $u$ (resp. $v$) at least $n$ times has total cost at least $1$.
\end{lemma}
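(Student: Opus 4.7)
\medskip
\noindent \textbf{Proof plan.} The plan is to decompose any oriented loop $L$ in the two-vertex graph into its self-loop edges and its crossing edges, and to bound the contribution of each group separately using the two hypotheses. Let $s_u$ and $s_v$ be the numbers of self-loops of $L$ at $u$ and at $v$, and let $c_+$ and $c_-$ be the numbers of edges of $L$ of type $(u,v)$ and of type $(v,u)$. Since $L$ is a closed oriented walk, every departure from $u$ along an edge to $v$ must be matched later by a return along an edge from $v$ to $u$, so $c_+ = c_- =: c$. The number of visits of $L$ to $u$ is then $k_u = s_u + c$, and similarly $k_v = s_v + c$.

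For the cost bound I would use the hypotheses term by term. Hypothesis~(1) directly gives that the total contribution of the self-loop edges to $c(L)$ is at least $(s_u + s_v)/n$. For the crossing edges, I would pair the $c$ edges of type $(u,v)$ with the $c$ edges of type $(v,u)$ arbitrarily into $c$ ordered pairs; by hypothesis~(2) each such pair contributes at least $1/n$, so the total crossing contribution is at least $c/n$. Adding these,
\[
c(L) \;\geq\; \frac{s_u + s_v + c}{n}.
\]

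To finish, if $L$ visits $u$ at least $n$ times then $s_u + c = k_u \geq n$ and therefore $s_u + s_v + c \geq n$, which yields $c(L) \geq 1$; the case $k_v \geq n$ is symmetric under swapping the roles of $u$ and $v$. There is no real obstacle in the argument—the lemma is essentially a bookkeeping statement that isolates the pairing trick for crossing edges (matching each $(u,v)$ with a $(v,u)$) in a form convenient for repeated use in the case analysis of Lemma~\ref{lemma: comparison with chi}.
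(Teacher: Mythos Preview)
Your proof is correct. It differs from the paper's in its organization: the paper cuts the loop at every visit to $u$, obtaining at least $n$ sub-loops each of which is either a single $(u,u)$ edge or a path $(u,v)(v,v)^s(v,u)$, and then checks that each sub-loop has cost at least $1/n$. You instead count edge types globally, use the Eulerian balance $c_+=c_-$ at $u$ to pair the crossing edges, and bound $c(L)\ge (s_u+s_v+c)/n\ge k_u/n$. Your route is a bit more direct and avoids the sub-loop decomposition altogether; the paper's decomposition is slightly more structural and makes the ``one unit of cost per visit to $u$'' intuition explicit, which mirrors how the surrounding case analysis in Lemma~\ref{lemma: comparison with chi} is phrased. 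Either argument is adequate here since the graph has only two vertices.
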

\begin{proof}
	By symmetry, it suffices to consider a loop visiting $u$ at least $n$ times. Such a loop decomposes into sub-loops each visiting $u$ exactly once. There are exactly two types of such sub-loops: 
	\begin{enumerate}
		\item Either it has exactly one edge $(u,u)$, 
		\item or it starts with $(u,v)$, ends with $(v,u)$, and has $s$ copies of $(v,v)$ in the middle for some $s\ge0$.
	\end{enumerate}
	In the first case, such a sub-loop has cost at least $1/n$, and in the second case, it has cost at least $1/n+s/n\ge 1/n$. Hence each sub-loop has cost at least $1/n$ no matter the type. The number of such sub-loops in the decomposition is the number of times that the given loop visits $u$, which is at least $n$ by assumption. So the total cost of the loop is at least $1$.
\end{proof}

The observation below shows how the $n$-RTF condition is used in our proof.
\begin{lemma}\label{lemma: use n-RTF}
	Under the assumptions of Theorem \ref{thm: HNN main}, if the polygonal boundary of a piece $P$ only contains arcs of one kind (i.e only $a_i^{\pm 1}$ or $b_i^{\pm 1}$), then $c(P)\ge \chi(P)$.
\end{lemma}
\begin{proof}
	Suppose all the arcs are $b_i$ for some fixed $1\le i\le m$. Then each edge of the loop has cost $c(b_i,b_i)=1/n$ by definition. 
	The same holds for the other cases and the proof remains the same except for possible looking at the inverse of the winding class.
	Let $s\ge1$ be the length of the loop. Then up to conjugation, the winding class is
	$$b_i \kappa_1\cdots b_i \kappa_s,$$
	where each $\kappa_i\in i_N(C)$.
	The winding class is nontrivial unless $s\ge n$ by our $n$-RTF assumption. Hence either $\chi(P)=0$ and the inequality holds trivially or $c(P)=s/n\ge 1= \chi(P)$.
	Thus we have $c(P)\ge \chi(P)$ as desired for all such $P$.
\end{proof}

\subsubsection{Case \rom{1}: Only involving $b_i^{\pm1}$'s}\label{subsubsec: case 1}
\begin{lemma}\label{lemma: case1}
    Under the assumptions of Theorem \ref{thm: HNN main}, suppose the loop in $\Gamma$ corresponding to the polygonal boundary of a piece $P\in\Pcal$ is supported on $b_i^{\pm 1}$'s. Then 
    $c(P)\ge\chi(P)$.
\end{lemma}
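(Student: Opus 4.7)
The plan is to analyze the cyclic loop $L$ representing the polygonal boundary of $P$, which lives in the induced $b$-subgraph of $\Gamma$. First, direct inspection of \eqref{eqn: cost} shows every edge cost in this subgraph is non-negative: the only potentially negative value $c_{-i,j} = 2/n - 1$ occurs at $i=j=1$ and involves $a_1^{\pm 1}$, not $b$-arcs. Hence annulus-pieces trivially satisfy $c(P) \ge 0 = \chi(P)$, and we reduce to proving $c(P) \ge 1$ for disk-pieces.

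For a disk-piece, the polygonal boundary represents the trivial conjugacy class in $A$, so the word $W = b_{i_1}^{\epsilon_1} \kappa_1 \cdots b_{i_k}^{\epsilon_k} \kappa_k$ equals $\id$ in $A$ with each $\kappa_j \in i_N(C)$, and Lemma \ref{lemma: no backtracking} forces $\kappa_j \ne \id$ whenever $(i_{j+1}, \epsilon_{j+1}) = (i_j, -\epsilon_j)$. The key technical step is a \emph{segment bound}: for each vertex $u = b_i^\epsilon$ visited by $L$, every maximal sub-walk starting and ending at $u$ without revisiting $u$ has cost at least $1/n$. I would verify this by a finite case check on entry/exit edge pairs via \eqref{eqn: cost}; the only edge of cost strictly less than $1/n$ is $c(b_m, b_m^{-1}) = 0$, but in every multi-edge segment at least one other edge still contributes $\ge 1/n$. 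Consequently, if any vertex is visited at least $n$ times, $L$ decomposes cyclically into $\ge n$ such segments, yielding $c(L) \ge 1$.

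In the remaining case---every $b_i^\epsilon$ visited fewer than $n$ times---I would split according to the loop's index/sign structure. If $L$ is supported on a single pair $\{b_j, b_j^{-1}\}$: for $j = m$, boundary-incompressibility and the visit bound supply conditions (1) and (2) of Definition \ref{def: n-RF}, so the $n$-RF hypothesis on $b_m$ forces $W \ne \id$, contradicting the disk assumption; for $j < m$, sign-monotone $L$ contradicts triviality of $W$ via $n$-RTF of $b_j$, while mixed-sign $L$ has at least one $\mathit{PN}$ edge of cost $1 - 1/n$ paired with an $\mathit{NP}$ edge of cost $1/n$, giving $c(L) \ge 1$. If $L$ visits at least two distinct indices, then when sign-monotone, closure of the cyclic index sequence forces at least one ``up'' edge (cost $1 - 1/n$), and together with $k \ge 2$ this yields $c(L) \ge 1$ by direct computation; when $L$ has a sign change, either some $\mathit{PN}$ edge lies off the $(m,m)$-diagonal (giving $\ge 1$ as above), or every $\mathit{PN}$ edge is at $(m,m)$, so every positive block ends at $b_m$ and every negative block begins at $b_m^{-1}$, and any block visiting some $b_j^{\pm 1}$ with $j < m$ accumulates ascent/descent cost $\ge (m-j)(1-1/n) \ge 1 - 1/n$, which combined with the $\mathit{NP}$ contribution $\ge 1/n$ still totals $\ge 1$.

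The most delicate part will be the final mixed-sign multi-index sub-case with all $\mathit{PN}$ edges at $(m,m)$, where I must carefully account for block-internal ascents and descents across the $b$-levels while respecting the $(m,m)$-exception in the cost function; the rest of the argument is a systematic case split using non-negativity of all $b$-edge costs, the segment bound, and the $n$-RF / $n$-RTF hypotheses.
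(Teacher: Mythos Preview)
Your argument is essentially correct, but it takes a noticeably more laborious route than the paper's. The paper's key simplifying device is a single linear order
\[
b_1 \prec b_2 \prec \cdots \prec b_m \prec b_m^{-1} \prec \cdots \prec b_1^{-1},
\]
under which the cost function becomes transparent: every $\prec$-ascending edge has cost $1-\tfrac{1}{n}$ except the single exception $(b_m,b_m^{-1})$ of cost $0$, and every non-ascending edge has cost $\tfrac{1}{n}$. The whole case analysis then collapses to two lines: if the loop visits at least two distinct vertices it has an ascending and a descending edge, and either the ascending one is not the exception (so $c(P)\ge 1$ immediately) or all ascending edges are $(b_m,b_m^{-1})$, forcing the loop onto $\{b_m,b_m^{-1}\}$ where $n$-RF plus Lemma~\ref{lemma: use n-RF} applies; if the loop sits on a single vertex, $n$-RTF handles it. Your sign/index decomposition, the separate ``segment bound'', and the block analysis in the all-PN-at-$(m,m)$ case all get absorbed into this one ordering observation. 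In particular, your segment bound is the general-vertex version of what the paper isolates as Lemma~\ref{lemma: use n-RF}, which the paper only needs for the two-vertex graph on $\{b_m,b_m^{-1}\}$.

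One small correction: your claimed block bound ``$\ge (m-j)(1-\tfrac{1}{n})$'' is false as stated, since a single jump $b_j\to b_m$ (or $b_m^{-1}\to b_j^{-1}$) costs only $1-\tfrac{1}{n}$ regardless of $m-j$. What is true, and all you actually use, is that such a block contains at least one ascending edge of cost $1-\tfrac{1}{n}$; combined with the NP edge of cost $\tfrac{1}{n}$ this still gives $c(P)\ge 1$. Also, the condition $\kappa_j\neq\id$ at sign changes comes from \emph{admissibility} of turns (the second requirement in Section~\ref{subsec: possible pieces}), not from Lemma~\ref{lemma: no backtracking} directly, though the motivation is the same.
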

\begin{proof}
    Define a linear order on the $b_i^{\pm 1}$'s as follows:
    $$b_1\prec b_2\prec\cdots\prec b_m\prec b_m^{-1}\prec\cdots\prec b_2^{-1}\prec b_1^{-1}.$$
    Then the cost $c$ we defined in equation (\ref{eqn: cost}) has the property that 
    $$
    c(b_i^{e_i},b_j^{e_j})=\left\{
    \begin{array}{cl}
    1-\frac{1}{n},     & \text{if }b_i^{e_i}\prec b_j^{e_j} \text{ and }(b_i^{e_1},b_j^{e_j})\neq(b_m,b_m^{-1}) \\
    \frac{1}{n},     & \text{if }b_i^{e_i}\succeq b_j^{e_j},\\
    0,               & \text{if }(b_i^{e_i},b_j^{e_j})=(b_m,b_m^{-1}),
    \end{array}\right.$$ 
    for any $1\le i,j\le m$ and $e_i,e_j=\pm1$.
    
    If the loop passes through at least two distinct vertices, then it contains one oriented edge that is ascending in the order $\prec$ and another that is descending. 
    \begin{enumerate}
        \item If an ascending edge is not $(b_m,b_m^{-1})$, then these two edges contribute $(1-1/n)+1/n=1$ to $c(P)$ and all the other edges have non-negative cost, so $c(P)\ge1\ge\chi(P)$.
        \item If all ascending edges in the loop are $(b_m,b_m^{-1})$, then all arcs on the polygonal boundary are $b_m$ and $b_m^{-1}$. In this case, the winding class takes the form
        $$b_m^{e_1} \kappa_1 b_m^{e_2}\kappa_2\cdots b_m^{e_s} \kappa_s$$
        for some $s\ge2$, $e_i=\pm1$, and $\kappa_i\in i_N(C)$.
        As turns are admissible, we have $\kappa_i\neq id_A$ whenever $e_i=-e_{i+1}$, indices taken mod $s$. By assumption, $b_m$ is $n$-RF rel $i_N(C)$, so the above word cannot be the identity unless it contains at least $n$ copies of $b_m$ (resp. $b_m^{-1}$). If the winding class is nontrivial, we have $\chi(P)=0$, so the desired inequality clearly holds; in the exceptional case where we have at least $n$ copies of $b_m$ (or $b_m^{-1}$), the total cost is at least $1$ by Lemma \ref{lemma: use n-RF} and hence no less than $\chi(P)$.
    \end{enumerate}
    
    The remaining case is when the loop visits the same vertex, say $b_i^{e_i}$, throughout. Then the result follows from Lemma \ref{lemma: use n-RTF}.
    
    Thus we have $c(P)\ge \chi(P)$ as desired for all such $P$.
\end{proof}

\subsubsection{Case \rom{2}: Only involving $a_i^{\pm1}$'s}\label{subsubsec: case 2}
Next we show
\begin{lemma}\label{lemma: case2}
    Under the assumptions of Theorem \ref{thm: HNN main}, suppose the loop in $\Gamma$ corresponding to the polygonal boundary of a piece $P\in\Pcal$ is supported on $a_i^{\pm 1}$'s. Then 
    $c(P)\ge\chi(P)$.
\end{lemma}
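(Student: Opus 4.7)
The plan is to mirror the argument of Case~I by setting up the linear order
\[
a_1 \prec a_2 \prec \cdots \prec a_m \prec a_m^{-1} \prec a_{m-1}^{-1} \prec \cdots \prec a_1^{-1}
\]
on the vertices $\{a_i^{\pm 1}\}$. A direct inspection of (\ref{eqn: cost}) shows that the cost of an edge $(u,v)$ between $a$-arcs is $1 - 1/n$ when $u \prec v$ and $1/n$ when $u \succeq v$, with the single exception of the edge $(a_1^{-1}, a_1)$ whose cost is $2/n - 1$. This exceptional edge is the analog of $(b_m, b_m^{-1})$ from Case~I, but it now appears as a descent rather than as a zero-cost ascent.

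Letting $U$, $D$, and $e$ denote respectively the numbers of ascending edges, descending edges (with self-loops counted as descents of cost $1/n$), and exceptional edges $(a_1^{-1}, a_1)$ on the polygonal boundary of $P$, a short algebraic manipulation yields the identity
\[
c(P) = (U-e)\left(1 - \tfrac{1}{n}\right) + \frac{D}{n}.
\]
From this identity, the easy cases follow. If the loop consists of $N$ self-loops at a single vertex $v = a_i^{\pm 1}$, the winding class has the form $v \kappa_1 \cdots v \kappa_N$ with $\kappa_j \in i_P(C)$, and the $n$-RTF condition on $a_i$ (implied by $n$-RF when $i = 1$) yields either $N \ge n$ and $c(P) = N/n \ge 1 \ge \chi(P)$, or a nontrivial winding class with $\chi(P) = 0 \le c(P)$. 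If the loop has at least two distinct vertices and contains no exceptional edge, then $U, D \ge 1$, $e = 0$, and the identity gives $c(P) \ge (1-1/n) + 1/n = 1 \ge \chi(P)$.

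The main obstacle is the remaining case: the loop contains at least one exceptional edge and therefore visits both $a_1$ and $a_1^{-1}$. If the loop is supported only on $\{a_1, a_1^{-1}\}$, I would invoke Lemma~\ref{lemma: use n-RF} with $u = a_1, v = a_1^{-1}$, after verifying $c(a_1, a_1) = c(a_1^{-1}, a_1^{-1}) = 1/n$ and $c(a_1, a_1^{-1}) + c(a_1^{-1}, a_1) = (1 - 1/n) + (2/n - 1) = 1/n$; this gives $c(P) \ge 1$ when one of the vertices is visited at least $n$ times, while the $n$-RF condition on $a_1$ forces a nontrivial winding class (so $\chi(P) = 0$) in the remaining low-visit subcase. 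Otherwise the loop visits some vertex outside $\{a_1, a_1^{-1}\}$, and I would run a flow-balance argument at $a_1^{-1}$ (equating incoming and outgoing edge counts) to derive the identity $U - e = q + U_{S_2}$, where $q \ge 1$ counts the edges from $\{a_1, a_1^{-1}\}$ to its complement and $U_{S_2} \ge 0$ counts ascents within that complement. Hence $U - e \ge 1$, and since $D \ge e \ge 1$, the identity once again delivers $c(P) \ge 1 \ge \chi(P)$.
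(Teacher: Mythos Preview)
Your argument is correct and reaches the same case split as the paper, but the mechanism in the key case differs. The paper does not use your global identity $c(P)=(U-e)(1-\tfrac1n)+\tfrac{D}{n}$; instead, when the loop contains $s\ge 1$ copies of $(a_1^{-1},a_1)$, it removes those edges and decomposes the loop into $s$ oriented paths from $a_1$ to $a_1^{-1}$, proving an auxiliary lemma (Lemma~\ref{lemma: a1 to A1}) that each such path has cost at least $1-\tfrac1n$, and at least $2(1-\tfrac1n)$ if it leaves $\{a_1,a_1^{-1}\}$. Summing gives $c(P)\ge s(2/n-1)+s(1-1/n)=s/n$, with an extra $1-\tfrac1n$ if any path strays outside $\{a_1,a_1^{-1}\}$. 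Your flow-balance computation $U-e=q+U_{S_2}$ reaches the same conclusion more directly, without the auxiliary path lemma; the paper's decomposition, on the other hand, makes the structure of excursions visible and is reused verbatim in Case~III (Lemma~\ref{lemma: X to x}), so it earns its keep there.

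One small point to tighten: in the low-visit subcase where the loop is supported on $\{a_1,a_1^{-1}\}$ and you conclude $\chi(P)=0$, you should also record $c(P)\ge 0$. This is immediate from your identity once you note that flow balance at $a_1$ forces the number of $(a_1,a_1^{-1})$ edges to equal $e$, hence $U-e=0$ and $c(P)=D/n\ge 0$; but as written you only invoke flow balance in the ``otherwise'' branch.
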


Similar to the proof of the previous case, we introduce a linear order on $a_i^{\pm1}$'s as follows:
$$a_1\prec a_2\prec\cdots\prec a_m\prec a_m^{-1}\prec\cdots\prec a_2^{-1}\prec a_1^{-1}.$$

Then the cost $c$ we defined in (\ref{eqn: cost}) has the property that 
$$
c(a_i^{e_i},a_j^{e_j})=\left\{
\begin{array}{cl}
1-\frac{1}{n},     & \text{if }a_i^{e_i}\prec a_j^{e_j},\\
\frac{1}{n},     & \text{if }a_i^{e_i}\succeq a_j^{e_j}  \text{ and }(a_i^{e_1},a_j^{e_j})\neq(a_1^{-1},a_1) \\
\frac{2}{n}-1,               & \text{if }(a_i^{e_i},a_j^{e_j})=(a_1^{-1},a_1),
\end{array}\right.
$$ 
for any $1\le i,j\le m$ and $e_i,e_j=\pm1$.
    
This case is slightly more complicated than the previous one since $c(a_1^{-1},a_1)=\frac{2}{n}-1$ could be negative.

Suppose the loop corresponding to the polygonal boundary of a piece $P$ as in Lemma \ref{lemma: case2} contains $s$ copies of the edge $(a_1^{-1},a_1)$, where $s\in\Z_{\ge0}$. Then the complement of these $s$ edges in the loop consists of $s$ oriented paths from $a_1$ to $a_1^{-1}$, where each path does not contain the edge $(a_1^{-1},a_1)$. 

\begin{lemma}\label{lemma: a1 to A1}
    Let $\rho$ be an oriented path from $a_1$ to $a_1^{-1}$ supported on $a_i^{\pm1}$'s so that $\rho$ does not contain the edge $(a_1^{-1},a_1)$. Then the cost $c(\rho)\ge 1-1/n$. Moreover, we have $c(\rho)\ge 2(1-1/n)$
    if $\rho$ visits any vertex other than $a_1$ and $a_1^{-1}$.
\end{lemma}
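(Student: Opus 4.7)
The plan is to exploit the linear order $\prec$ on $\{a_i^{\pm 1}\}$ together with the explicit cost formula stated just before the lemma. Under this order $a_1$ is the minimum and $a_1^{-1}$ is the maximum, and each edge of $\Gamma$ supported on $\{a_i^{\pm1}\}$ falls into exactly one of three classes: strictly $\prec$-ascending edges with cost $1-1/n$; the single exceptional edge $(a_1^{-1},a_1)$ with cost $2/n-1$, which may be negative; and all remaining non-ascending edges (strict descents and self-loops) with cost $1/n$. Since by hypothesis $\rho$ avoids the exceptional edge, every edge of $\rho$ contributes non-negatively to $c(\rho)$, and in fact $c(\rho)\ge(1-1/n)\cdot N(\rho)$, where $N(\rho)$ is the number of strictly ascending edges of $\rho$. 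The entire argument therefore reduces to lower-bounding $N(\rho)$.

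For the first inequality I would observe that $N(\rho)\ge 1$: otherwise every edge of $\rho$ is non-ascending, so its vertex sequence is weakly $\prec$-decreasing, contradicting the fact that $\rho$ starts at the minimum $a_1$ and ends at the maximum $a_1^{-1}$.

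For the second inequality, suppose $\rho$ visits some $v\notin\{a_1,a_1^{-1}\}$, and assume for contradiction that $N(\rho)=1$, with unique ascending edge from $u$ to $w$. The initial sub-path from $a_1$ to $u$ uses only non-ascending edges, so its vertex sequence is weakly $\prec$-decreasing and starts at the minimum $a_1$, forcing every vertex in this sub-path (in particular $u$) to equal $a_1$. Symmetrically, the terminal sub-path from $w$ to $a_1^{-1}$ is weakly $\prec$-decreasing and ends at the maximum $a_1^{-1}$, so every vertex there equals $a_1^{-1}$, and in particular $w=a_1^{-1}$. Hence $\rho$ visits only $\{a_1,a_1^{-1}\}$, contradicting the presence of $v$. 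Thus $N(\rho)\ge 2$ and $c(\rho)\ge 2(1-1/n)$.

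I do not anticipate a serious obstacle here: the argument is a clean combinatorial consequence of the shape of the cost function, and the main point to get right is the bookkeeping observation that $(a_1^{-1},a_1)$ is the sole source of potentially negative cost among edges supported on $\{a_i^{\pm1}\}$, which is precisely why its exclusion from $\rho$ is built into the hypothesis.
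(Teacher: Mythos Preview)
Your proposal is correct and follows essentially the same approach as the paper: both exploit that, with $(a_1^{-1},a_1)$ excluded, every edge has non-negative cost and every strictly $\prec$-ascending edge contributes $1-1/n$, so the whole lemma reduces to counting ascending edges. The only cosmetic difference is that for the ``moreover'' clause the paper argues directly that $a_1\prec v\prec a_1^{-1}$ forces at least two ascending edges (one on the subpath from $a_1$ to $v$ and one from $v$ to $a_1^{-1}$), whereas you phrase the same observation as a contradiction argument showing $N(\rho)=1$ would confine $\rho$ to $\{a_1,a_1^{-1}\}$.
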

\begin{proof}
    As $\rho$ does not contain $(a_1^{-1},a_1)$, each edge in $\rho$ has non-negative cost. Since $a_1\prec a_1^{-1}$, the path $\rho$ contains at least one ascending edge, which contributes $1-1/n$ to the cost, so $c(\rho)\ge1-1/n$.
    If $\rho$ visits any vertex $a_i^{\pm 1}$ other than $a_1$ and $a_1^{-1}$, then $\rho$ contains at least two ascending edges since $a_1\prec a_i^{\pm 1}\prec a_1^{-1}$. Thus in this case we have $c(\rho)\ge 2(1-1/n)$.
\end{proof}

Now we are ready to prove Lemma \ref{lemma: case2}.
\begin{proof}[Proof of Lemma \ref{lemma: case2}]
    By the discussion above, suppose the loop corresponding to the polygonal boundary of $P$ contains $s$ copies of the edge $(a_1^{-1},a_1)$.
    
    If $s=0$, then each edge in the loop has non-negative cost. If the loop visits at least two distinct vertices, then there is an ascending edge and a descending edge with respect to the order $\prec$, so $c(P)\ge (1-1/n)+1/n=1$. If the loop keeps visiting the same vertex, say $a_i^{e_i}$, then the result follows from Lemma \ref{lemma: use n-RTF}.
    
    If $s\ge1$, consider the $s$ paths from $a_1$ to $a_1^{-1}$ obtained by removing the $s$ copies of $(a_1^{-1},a_1)$ from the loop. By Lemma \ref{lemma: a1 to A1}, we have
    $$c(P)\ge s\left(\frac{2}{n}-1\right)+s(1-\frac{1}{n})=\frac{s}{n}\ge0.$$
    Moreover, if at least one of the paths visits some vertex other than $a_1$ or $a_1^{-1}$, then 
    $$c(P)\ge \frac{s}{n}+1-\frac{1}{n}=\frac{s-1}{n}+1\ge1\ge\chi(P).$$
    So the remaining case is where the entire loop only visits $a_1$ and $a_1^{-1}$. 
    Then we are in a situation to apply Lemma \ref{lemma: use n-RF}, noting that the cost of $(a_1,a_1^{-1})$ and $(a_1^{-1},a_1)$ sums to $1/n$.
    In this case, the winding class of the polygonal boundary is
    $$a_1^{e_1}\kappa_1a_1^{e_2}\kappa_2\cdots a_1^{e_{s'}}\kappa_{s'}$$
    where $e_i=\pm 1$, $s'\ge 2s$, and $\kappa_i\in i_P(C)$.
    Since the turns are admissible, $\kappa_i\neq id_A$ if $e_i=-e_{i+1}$. Thus it is nontrivial by the $n$-RF condition on $a_1$ unless it contains at least $n$ copies of $a_1$ (resp. $a_1^{-1}$). If it is nontrivial, then $\chi(P)=0\le c(P)$. In the exceptional case, the total cost $c(P)$ is at least $1$ by Lemma \ref{lemma: use n-RF} and hence no less than $\chi(P)$.
    Hence in any case we have $c(P)\ge\chi(P)$.
\end{proof}

\subsubsection{Case \rom{3}: Involving both $x$ and $x^{-1}$}
Now we prove
\begin{lemma}\label{lemma: case3}
    Under the assumptions of Theorem \ref{thm: HNN main}, suppose the loop in $\Gamma$ corresponding to the polygonal boundary of a piece $P\in\Pcal$ passes through both $x$ and $x^{-1}$. Then 
    $c(P)\ge1\ge \chi(P)$.
\end{lemma}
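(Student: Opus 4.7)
The plan is to exploit the cyclic alternation between visits to $x$ and visits to $x^{-1}$ forced by the structure of $\Gamma$. Since the only admissible turns leaving $x$ (type $PN$, ending on the $N$ side) go to some $b_j^{\pm 1}$ or to $x^{-1}$, and symmetrically the only admissible turns leaving $x^{-1}$ (ending on the $P$ side) go to some $a_j^{\pm 1}$ or to $x$, the occurrences of $x$ and $x^{-1}$ in the loop must alternate cyclically. Let $k \ge 1$ be the common number of visits to each. The loop then decomposes cyclically into $k$ sub-paths of \emph{type (A)} from $x$ to $x^{-1}$ (through $b$-vertices or via the direct edge) and $k$ sub-paths of \emph{type (B)} from $x^{-1}$ to $x$ (through $a$-vertices or via the direct edge). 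I will show that every type (A) sub-path has cost $\ge 0$ and every type (B) sub-path has cost $\ge 1$; summing then gives $c(P) \ge k\cdot 0 + k\cdot 1 = k \ge 1 \ge \chi(P)$, since pieces are disks or annuli.

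For type (A), a direct inspection of~(\ref{eqn: cost}) shows that every admissible edge in the subgraph spanned by $\{x, x^{-1}\} \cup \{b_j^{\pm 1}\}$ carries non-negative cost: the edges $x \to b_j^{\pm 1}$, $b_j^{\pm 1} \to x^{-1}$, $x \to x^{-1}$, and the exceptional edge $b_m \to b_m^{-1}$ all have cost $0$, while the remaining internal $b \to b$ turns have cost $1/n$ or $1 - 1/n$. Non-negativity therefore propagates to the sum along any type (A) sub-path.

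For type (B), the cleanest approach is a discrete potential function on the subgraph spanned by $\{x, x^{-1}\} \cup \{a_j^{\pm 1}\}$. Define
\[
\phi(x^{-1}) = 0, \qquad \phi(a_j) = \frac{1}{n}, \qquad \phi(a_j^{-1}) = 1 - \frac{1}{n}, \qquad \phi(x) = 1.
\]
The goal is to verify that $c(u \to v) \ge \phi(v) - \phi(u)$ for every admissible turn $u \to v$ in this subgraph, which is a finite case check against~(\ref{eqn: cost}). Telescoping along any type (B) sub-path then gives total cost at least $\phi(x) - \phi(x^{-1}) = 1$.

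The hard part, and the reason for the particular values appearing in~(\ref{eqn: cost}), is accommodating the edge $a_1^{-1} \to a_1$ of negative cost $2/n - 1$, which was forced in Case~\rom{2} to encode the $n$-RF hypothesis on $a_1$ and threatens to make some type (B) sub-path cheap. The potential above is tailored so that $\phi(a_1) - \phi(a_1^{-1}) = 2/n - 1$ matches this cost with equality, while the jumps $\phi(a_j) - \phi(x^{-1}) = 1/n$ and $\phi(x) - \phi(a_j^{-1}) = 1/n$ absorb exactly the cheap entries and exits at $x^{\pm 1}$; the remaining inequalities reduce to comparing $1/n$ or $1 - 1/n$ against $0$ or $1 - 2/n$ and are routine.
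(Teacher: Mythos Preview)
Your argument is correct. The overall decomposition into alternating type~(A) and type~(B) sub-paths, and the observation that type~(A) sub-paths have non-negative cost because the only negative edge $(a_1^{-1},a_1)$ lies outside the $b$-subgraph, match the paper exactly. Where you diverge is in the treatment of the type~(B) sub-paths. The paper (Lemma~\ref{lemma: X to x}) proceeds by a case analysis on the number $s$ of occurrences of the bad edge $(a_1^{-1},a_1)$: the case $s=0$ is handled by inspecting the two end edges and finding an ascending edge with respect to the order~$\prec$, while the case $s\ge1$ invokes two auxiliary lemmas (Lemmas~\ref{lemma: a1 to A1} and~\ref{lemma: cost of subpaths}) bounding the cost of the sub-paths between consecutive copies of that edge, and then sums. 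Your potential function $\phi$ replaces all of this with a single telescoping inequality $c(u\to v)\ge\phi(v)-\phi(u)$; the finite edge check you describe goes through (in fact most edges give equality, and the only genuine inequalities are $1-1/n\ge 0$, $1/n\ge 0$, $1-1/n\ge 1-2/n$, and $1/n\ge 2/n-1$). This is shorter and more transparent, and it makes visible \emph{why} the particular costs in~(\ref{eqn: cost}) were chosen: they are, up to slack, the increments of a potential rising from $0$ at $x^{-1}$ to $1$ at $x$. The paper's approach, on the other hand, reuses machinery already developed for Case~\rom{2} and so fits more naturally into the surrounding narrative.
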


As shown in Lemma \ref{lemma: three cases}, the loop corresponding to such $P$ must contain a path from $x^{-1}$ to $x$ through $a_i^{\pm1}$'s. The key is to show
\begin{lemma}\label{lemma: X to x}
    Any path from $x^{-1}$ to $x$ through $a_i^{\pm1}$'s has cost at least $1$.
\end{lemma}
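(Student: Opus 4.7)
The plan is to isolate the first edge (leaving $x^{-1}$) and the last edge (entering $x$), reducing the middle to a walk in the subgraph $\Gamma_a \subset \Gamma$ induced on $\{a_i^{\pm1}\}$, to which the ideas of Lemma~\ref{lemma: case2} and Lemma~\ref{lemma: a1 to A1} apply. Write the path as $x^{-1} \to v_1 \to \cdots \to v_k \to x$ with $v_j = a_{i(j)}^{e(j)}$. Reading off the cost formula (\ref{eqn: cost}), the first and last edges together contribute: $1/n + (1-1/n) = 1$ when $e(1) = e(k)$; $1/n + 1/n = 2/n$ when $(e(1), e(k)) = (+, -)$; and $(1-1/n) + (1-1/n) = 2 - 2/n$ when $(e(1), e(k)) = (-, +)$.

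I would then bound the cost of the middle walk $v_1 \to \cdots \to v_k$ inside $\Gamma_a$. The only edge of negative cost in $\Gamma_a$ is the special edge $(a_1^{-1}, a_1)$, of cost $2/n - 1$. Let $s \ge 0$ be the number of times this edge is traversed. Deleting these $s$ occurrences splits the walk into $s+1$ sub-walks, all avoiding the special edge. The $s-1$ intermediate sub-walks each go from $a_1$ to $a_1^{-1}$, so by Lemma~\ref{lemma: a1 to A1} each has cost $\ge 1 - 1/n$. The initial sub-walk (from $v_1$ to $a_1^{-1}$) and the final sub-walk (from $a_1$ to $v_k$) consist only of non-negative-cost edges; moreover, each has cost $\ge 1 - 1/n$ whenever its endpoints lie in opposite halves of the linear order $a_1 \prec \cdots \prec a_m \prec a_m^{-1} \prec \cdots \prec a_1^{-1}$ from the proof of Lemma~\ref{lemma: case2}, since at least one ascending edge (cost $1-1/n$) must then appear.

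Combining these estimates, in each of the four sign cases I expect the total cost to work out to at least $1$. For instance, in the case $(+, +)$ the initial sub-walk always connects opposite halves, contributing $\ge 1-1/n$, and a short calculation gives middle cost $\ge (1-1/n) + s(2/n-1) + (s-1)(1-1/n) = s/n \ge 0$, so the total is $\ge 1$. The cases $(-, -)$ and $(+, -)$ are entirely analogous (and in fact easier because the middle already has net ascent). The subtle case is $(-, +)$: here when $i(1) = i(k) = 1$ both the initial and final sub-walks can be empty, so the middle may contribute only $s(2/n - 1) + (s-1)(1-1/n) = (s+1)/n - 1$, possibly negative; but the boundary surplus $(2 - 2/n) - 1 = 1 - 2/n$ precisely absorbs the shortfall, yielding total $\ge 1 + (s-1)/n \ge 1$.

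The main obstacle is this last case: the negative contribution of multiple special edges must be balanced against the boundary surplus, and one must invoke Lemma~\ref{lemma: a1 to A1} only on the intermediate sub-walks (not on the potentially empty initial/final ones) in order to make the estimate work uniformly in $s$ and in $n$. Once this bookkeeping is in place, the bound $c(\rho) \ge 1$ follows in all cases.
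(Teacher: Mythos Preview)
Your approach is correct and follows the same core idea as the paper: decompose the path at occurrences of the single negative-cost edge $(a_1^{-1},a_1)$ and use Lemma~\ref{lemma: a1 to A1} on the intermediate sub-walks. The bookkeeping, however, is organized differently. You strip off the first edge (out of $x^{-1}$) and last edge (into $x$), compute their cost via a four-way case split on $(e(1),e(k))$, and then bound the purely-$\Gamma_a$ middle. The paper instead leaves those endpoint edges attached and proves a separate estimate (Lemma~\ref{lemma: cost of subpaths}) showing that any path from $x^{-1}$ to $a_1^{-1}$, and any path from $a_1$ to $x$, through $a_i^{\pm1}$'s already has cost $\ge 1-1/n$. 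With that lemma in hand, the $s\ge1$ case collapses to the single line
\[
s\Bigl(\tfrac{2}{n}-1\Bigr)+(s+1)\Bigl(1-\tfrac{1}{n}\Bigr)=1+\tfrac{s-1}{n}\ge 1,
\]
with no sign casework. Your method trades this auxiliary lemma for the four-case analysis; both reach the same bound.

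One small omission: the lemma also covers the degenerate path consisting of the single edge $(x^{-1},x)$ with no intermediate vertex, which has cost $c_{-(2m+1),\,2m+1}=1$ by (\ref{eqn: cost}). Your setup $x^{-1}\to v_1\to\cdots\to v_k\to x$ implicitly assumes $k\ge 1$, so you should dispose of the $k=0$ case at the outset.
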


To prove this, we need the following observation:
\begin{lemma}\label{lemma: cost of subpaths}
    Any path from $x^{-1}$ to $a_1^{-1}$ through $a_i^{\pm1}$'s has cost at least $1-1/n$. The same holds for any path from $a_1$ to $x$ through $a_i^{\pm1}$'s.
\end{lemma}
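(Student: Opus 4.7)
The plan is to decompose any such path at each occurrence of the special edge $(a_1^{-1}, a_1)$, which is the unique edge among $a_i^{\pm 1}$'s with negative cost ($2/n - 1$). Once this edge is excluded from a sub-path, every remaining edge has non-negative cost, so the delicate bookkeeping is localized to this one edge type, and Lemma \ref{lemma: a1 to A1} is tailor-made for the resulting sub-paths from $a_1$ to $a_1^{-1}$.

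For the first part, consider a path $\rho$ from $x^{-1}$ to $a_1^{-1}$ using $(a_1^{-1}, a_1)$ exactly $s$ times. Cutting $\rho$ at these $s$ edges yields one initial sub-path $\rho_0$ from $x^{-1}$ to $a_1^{-1}$ together with $s$ sub-paths $\rho_1, \ldots, \rho_s$ from $a_1$ to $a_1^{-1}$, none of which contains $(a_1^{-1}, a_1)$. Lemma \ref{lemma: a1 to A1} gives $c(\rho_j) \ge 1 - 1/n$ for each $j \ge 1$. To bound $c(\rho_0)$, I would verify directly from the cost table (\ref{eqn: cost}) that \emph{every} edge $(v, a_1^{-1})$ with $v \ne a_1^{-1}$ and $v \in \{x^{-1}\} \cup \{a_j^{\pm 1}\}$ has cost exactly $1 - 1/n$ (a short case check using the rules for $c_{i,-j}$ and $c_{-i,-j}$). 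Since $\rho_0$ starts at $x^{-1} \ne a_1^{-1}$ and eventually reaches $a_1^{-1}$, it must contain at least one such "first entry" edge, which contributes $1 - 1/n$; the remaining edges, having non-negative cost, yield $c(\rho_0) \ge 1 - 1/n$. Summing gives
\[
c(\rho) \ge (1 - 1/n) + s(2/n - 1) + s(1 - 1/n) = (1 - 1/n) + s/n \ge 1 - 1/n.
\]

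For the second part, the same decomposition of a path from $a_1$ to $x$ produces $s$ sub-paths from $a_1$ to $a_1^{-1}$ (each of cost $\ge 1 - 1/n$ by Lemma \ref{lemma: a1 to A1}) together with one final sub-path $\rho^\ast$ from $a_1$ to $x$ avoiding $(a_1^{-1}, a_1)$. As before, it suffices to establish $c(\rho^\ast) \ge 1 - 1/n$. The last edge of $\rho^\ast$ is of the form $(v, x)$; if $v = a_j$ then this edge alone contributes $1 - 1/n$, and non-negativity of the remaining edges finishes the bound. The only delicate case is $v = a_j^{-1}$, where the final edge contributes only $1/n$. But then $\rho^\ast$ must travel from $a_1$, the minimum of the order $\prec$, up to $a_j^{-1}$, which lies strictly above it, so $\rho^\ast$ must contain at least one ascending edge of cost $1 - 1/n$; combining this with the $1/n$ from the last edge gives $c(\rho^\ast) \ge 1$.

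The main obstacle, and the reason I would present the two parts in parallel rather than appeal to a single symmetric principle, is this minor asymmetry of the cost function: all edges entering $a_1^{-1}$ from a distinct vertex have the uniform cost $1 - 1/n$, which makes the first part a direct consequence of Lemma \ref{lemma: a1 to A1} plus one entry check, whereas edges entering $x$ split into two cost classes ($1 - 1/n$ from $a_j$ versus $1/n$ from $a_j^{-1}$), forcing the extra ascending-edge argument in the second part. Beyond this, the proof reduces entirely to the decomposition scheme and the direct verification of a few entries of (\ref{eqn: cost}).
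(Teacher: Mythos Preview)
Your proof is correct, but it organizes the argument differently from the paper's.

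The paper's proof first reduces to the case where the path visits $a_1^{-1}$ exactly once, by peeling off loops supported on $a_i^{\pm1}$'s and invoking Lemma~\ref{lemma: case2} to say each such loop has non-negative cost; this automatically eliminates the edge $(a_1^{-1},a_1)$. It then looks at the \emph{first} edge of the path (out of $x^{-1}$) and splits into the two cases $(x^{-1},a_i^{-1})$ versus $(x^{-1},a_i)$, using an ascending-edge argument in the latter. For the second statement, the paper simply appeals to the symmetry $c_{u,v}=c_{-v,-u}$ of the cost table, under which reversing a path from $a_1$ to $x$ and inverting all arcs yields a path from $x^{-1}$ to $a_1^{-1}$.

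Your approach instead decomposes at each occurrence of $(a_1^{-1},a_1)$ and handles the resulting $a_1\to a_1^{-1}$ sub-paths via Lemma~\ref{lemma: a1 to A1} directly, then bounds the leftover piece by locating the \emph{first-entry} edge into $a_1^{-1}$ and observing the clean fact that every such edge from a distinct vertex has uniform cost $1-1/n$. For the second statement you argue directly rather than via symmetry, which forces the extra case split you note. Your route avoids citing Lemma~\ref{lemma: case2} and trades the paper's symmetry shortcut for an explicit check; the paper's route is shorter but leans on a lemma whose proof is more involved. Both are equally valid and neither is materially harder.
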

\begin{proof}
    Note that by the definition of the cost $c$, we have $c(x^{-1},a_i)=1/n$ and $c(x^{-1},a_i^{-1})=1-1/n$. The cost among $a_i^{\pm 1}$'s is described in Section \ref{subsubsec: case 2} using the linear order $\prec$. 
    
    Consider a path from $x^{-1}$ to $a_1^{-1}$; see Figure \ref{fig: turngraph} for an illustration. We may assume that it only visits $a_1^{-1}$ once, since otherwise it is such a path concatenated with several loops supported on $a_i^{\pm1}$'s and each such loop has non-negative cost by Lemma \ref{lemma: case2}. Then the path does not contain the edge $(a_1^{-1},a_1)$ and thus all edges involved have non-negative cost. Now the first edge of the path is either of the form $(x^{-1},a_i^{-1})$ for some $i$, or $(x^{-1},a_i)$ for some $i$. In the former case, the first edge already has cost $1-1/n$ so the cost of the entire path is no smaller. In the latter case, there must be an ascending edge among $a_i^{\pm1}$'s with respect to the order $\prec$, which has cost $1-1/n$. Hence the cost of such a path is at least $1-1/n$ in any case.
    
    The case for a path from $a_1$ to $x$ is similar (actually symmetric by reversing the orientation of the path), noting that $c(a_i,x)=1-1/n$ and $c(a_i^{-1},x)=1/n$. Thus we omit the detailed proof.
\end{proof}
\begin{proof}[Proof of lemma \ref{lemma: X to x}]
    Consider a path from $x^{-1}$ to $x$ through $a_i^{\pm1}$'s, that is, $x$ and $x^{-1}$ only appear at the two ends and all other vertices on the path are $a_i^{\pm 1}$'s. If the path is simply the edge $(x^{-1},x)$, then its cost is $c_{-(2m+1),2m+1}=1$ by the defining equation (\ref{eqn: cost}).
    
    Now we assume the path passes through some $a_i^{\pm1}$'s. Suppose the path contains $s$ copies of the edge $(a_1^{-1},a_1)$, where $s\in\Z_{\ge0}$.
    
    If $s=0$, then all edges in the path have non-negative cost. The two edges at the two ends of the path have total cost at least $1$ unless they are of the form $(x^{-1},a_i)$ and $(a_j^{-1},x)$ respectively for some $i,j$.
    In this case, there is a subpath from $a_i$ to $a_j^{-1}$. Then there must be an ascending edge as $a_i\prec a_j^{-1}$, which has cost $1-1/n$.
    Then the total cost of the entire path is at least $2/n+(1-1/n)\ge1$.
    
    If $s\ge1$, then the complement of these $s$ edges in the path consists of a subpath from $x^{-1}$ to $a_1^{-1}$, $(s-1)$ subpaths from $a_1$ to $a_1^{-1}$, and a subpath from $a_1^{-1}$ to $x$. By Lemmas \ref{lemma: a1 to A1} and \ref{lemma: cost of subpaths}, each of these subpaths has cost at least $1-1/n$. Therefore, the cost of the entire path is at least
    $$s\left(\frac{2}{n}-1\right)+(s+1)\left(1-\frac{1}{n}\right)=\frac{s-1}{n}+1\ge 1.$$
\end{proof}

Now we are ready to prove Lemma \ref{lemma: case3}.
\begin{proof}
    By the structure revealed in Lemma \ref{lemma: three cases}, the loop corresponding to $P$ alternates between paths from $x$ to $x^{-1}$ through $b_i^{\pm1}$'s and paths from $x^{-1}$ to $x$ through $a_i^{\pm1}$'s. Note that the only edge that possibly has negative cost is $(a_1^{-1},a_1)$, so the cost of each path from $x$ to $x^{-1}$ through $b_i^{\pm1}$'s is non-negative. On the other hand, any path from $x^{-1}$ to $x$ through $a_i^{\pm 1}$'s has cost at least $1$ by Lemma \ref{lemma: X to x}. Hence the total cost is no less than $1$ as desired.
\end{proof}

\subsubsection{Proof of Lemma \ref{lemma: comparison with chi}}
Putting all three cases together, we can now prove Lemma \ref{lemma: comparison with chi}.
\begin{proof}[Proof of Lemma \ref{lemma: comparison with chi}]
    Consider any piece $P\in\Pcal$. By Lemma \ref{lemma: comparison with chi}, the loop corresponding to the polygonal boundary of $P$ falls into one of three cases. By Lemmas \ref{lemma: case1}, \ref{lemma: case2} and \ref{lemma: case3}, in each case we have $c(P)\ge\chi(P)$.
\end{proof}

\subsection{The sum $\sum_{T\in \Tcal} c(T) t_T$}

Now we turn to verifying the following computation, as the other condition that we need to apply Proposition \ref{prop: LP dual}. Recall that $t_T$ is the number of turns of type $T$ for each $T\in\Tcal$. We extend it to all turn types $T$ by setting $t_T=0$ for all $T$ not admissible.
\begin{lemma}\label{lemma: sum}
    For $w$ as in Theorem \ref{thm: HNN main} and every boundary-incompressible $w$-admissible surface $S$ in simple normal form, we have 
    $$\sum_{T\in \Tcal} c(T)t_T=\left(\frac{|w|}{2}-1+\frac{1}{n}\right)\deg(S).$$
\end{lemma}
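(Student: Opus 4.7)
The plan is to identify a ``potential'' function $\lambda$ on arc types so that, for each pair of paired admissible turn types, the sum of their two costs equals the sum of $\lambda$-values at the two ``from'' arcs; the total sum $\sum_T c(T)t_T$ then collapses by the outgoing-arc count.

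Since $p(w)=1$, Lemma \ref{lemma: half degree} gives $\deg_+(S)=\deg_-(S)=D:=\deg(S)/2$. Consequently, for each arc type $\alpha\in\{\gamma_i,\bar\gamma_i:1\le i\le 2m+1\}$, the number of turns with $\alpha$ as the ``from'' arc equals the number of boundary occurrences of $\alpha$, namely $D$. Define
\[
\lambda(\gamma_i)=\begin{cases}\tfrac{1}{2} & 1\le i\le 2m,\\ \tfrac{1}{n}-\tfrac{1}{2} & i=2m+1,\end{cases}\qquad
\lambda(\bar\gamma_i)=\begin{cases}\tfrac{1}{n}-\tfrac{1}{2} & i=1,\\ \tfrac{1}{2} & 2\le i\le 2m+1,\end{cases}
\]
so that $\sum_\alpha\lambda(\alpha)=2m-1+\tfrac{2}{n}$.

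The central claim is that for every pair of paired admissible turn types $(\alpha,c,\alpha')\leftrightarrow(\alpha_3,c^{-1},\alpha_2)$,
\[c(\alpha,\alpha')+c(\alpha_3,\alpha_2)=\lambda(\alpha)+\lambda(\alpha_3).\]
Granting this, and writing $\alpha_T$ for the ``from'' arc of a turn type $T$, the gluing condition $t_T=t_{T'}$ for paired $T,T'$ yields
\begin{align*}
\sum_{T\in\mathcal{T}}c(T)\,t_T
&= \sum_{\{T,T'\}\text{ paired}}(c(T)+c(T'))\,t_T \\
&= \sum_{\{T,T'\}\text{ paired}}(\lambda(\alpha_T)+\lambda(\alpha_{T'}))\,t_T \\
&= \sum_{T\in\mathcal{T}}\lambda(\alpha_T)\,t_T = \sum_\beta\lambda(\beta)\cdot D = \left(\tfrac{|w|}{2}-1+\tfrac{1}{n}\right)\deg(S).
\end{align*}

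To prove the claim, I would enumerate the four pairing types that arise from whether each endpoint of the $F$-arc lies on a positive or a negative $w$-boundary:
\[
(\gamma_i,\gamma_j)\leftrightarrow(\gamma_{j-1},\gamma_{i+1}),\qquad
(\bar\gamma_i,\bar\gamma_j)\leftrightarrow(\bar\gamma_{j+1},\bar\gamma_{i-1}),
\]
\[
(\gamma_i,\bar\gamma_j)\leftrightarrow(\bar\gamma_{j+1},\gamma_{i+1}),\qquad
(\bar\gamma_i,\gamma_j)\leftrightarrow(\gamma_{j-1},\bar\gamma_{i-1}),
\]
with indices taken mod $2m+1$. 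In each case, admissibility forces certain parities of $i,j$, and direct substitution into the piecewise definition (\ref{eqn: cost}) verifies the claim.

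The main obstacle is the bookkeeping: the cost function has many branches, with exceptional values at $i=2m+1$, $j=2m+1$, $(i,j)=(2m,2m)$, $(i,j)=(1,1)$, and $(i,j)=(2m+1,2m+1)$. After the cyclic shifts in the paired partner, these exceptional regions can overlap in subtle ways. The values $\lambda(\gamma_{2m+1})=\lambda(\bar\gamma_1)=\tfrac{1}{n}-\tfrac{1}{2}$ are chosen precisely so that the most exceptional costs $c_{-1,1}=\tfrac{2}{n}-1$ and $c_{-(2m+1),2m+1}=1$ combine with the (zero) costs of their paired partners to give $\lambda(\bar\gamma_1)+\lambda(\gamma_{2m+1})=\tfrac{2}{n}-1$ and $\lambda(\bar\gamma_{2m+1})+\lambda(\gamma_{2m})=1$ respectively.
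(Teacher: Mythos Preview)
Your approach is correct and genuinely different from the paper's. The paper proves Lemma~\ref{lemma: sum} by a lengthy direct computation: it splits $\sum_T c(T)t_T$ into four double sums according to the signs of the indices, expands each using the piecewise definition of $c$, and then repeatedly applies the gluing identities (Lemma~\ref{lemma: t equality from gluing}), the row/column normalization (Lemma~\ref{lemma: t equality from normalizing}), and the vanishing (Lemma~\ref{lemma: vanishing}) to collapse everything; this takes several intermediate lemmas and a fair amount of algebra. Your potential-function argument replaces all of that with a single identity on paired turn types, after which the answer falls out from the outgoing-arc count. What your method buys is conceptual clarity and brevity: once the identity $c(T)+c(T')=\lambda(\alpha_T)+\lambda(\alpha_{T'})$ is established, no further manipulation of the $t_T$'s is needed beyond Lemma~\ref{lemma: t equality from normalizing}. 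What the paper's method buys is that it does not require guessing the right $\lambda$; it is mechanical once one commits to expanding everything.

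Two small points you should make explicit. First, your grouping into unordered pairs $\{T,T'\}$ tacitly assumes no admissible turn type is self-paired; this is true because the only arc-pairs fixed by the pairing involution are $(\gamma_i,\gamma_{i+1})$ and $(\bar\gamma_{i+1},\bar\gamma_i)$, which are not admissible (Lemma~\ref{lemma: vanishing}). Second, the case check for your central identity, while routine, does have several boundary cases (indices hitting $1$ or $2m{+}1$ after the cyclic shift) that should be written out; I verified all of them and they go through exactly because of your choice $\lambda(\gamma_{2m+1})=\lambda(\bar\gamma_1)=\tfrac{1}{n}-\tfrac{1}{2}$, but as written the proposal only spot-checks the two most extreme cases.
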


To simplify the notation, for any $1\le i,j\le 2m+1$, let $t_{i,j}=\sum_{\kappa\in C} t_{(\gamma_i,\kappa,\gamma_j)}$, $t_{i,-j}=\sum_{\kappa\in C} t_{(\gamma_i,\kappa,\gamma_j^{-1})}$, and similarly for $t_{-i,j}$ and $t_{-i,-j}$. For convenience, we also set $t_{0,j}=t_{i,0}=0$ for any $i,j\in\Z$.

Since $c(\gamma_i,\kappa,\gamma_j)=c_{i,j}$ does not depend on $\kappa$, we have
\begin{equation}\label{eqn: four terms}
    \sum_{T\in \Tcal} c(T) t_T=\sum_{i,j=1}^{2m+1} c_{i,j} t_{i,j}+\sum_{i,j=1}^{2m+1} c_{i,-j} t_{i,-j}+\sum_{i,j=1}^{2m+1} c_{-i,j} t_{-i,j}+\sum_{i,j=1}^{2m+1} c_{-i,-j} t_{-i,-j}.
\end{equation}

For the computation below, we use the following basic facts.
\begin{lemma}\label{lemma: t equality from gluing}
    We have 
    \begin{enumerate}
        \item $t_{i,j}=t_{j-1,i+1}$,
        \item $t_{i,-j}=t_{-(j+1),i+1}$,
        \item $t_{-i,j}=t_{j-1,-(i-1)}$,
        \item $t_{-i,-j}=t_{-(j+1),-(i-1)}$,
    \end{enumerate}
    for any $1\le i,j\le 2m+1$, where each $i\pm 1$ or $j\pm 1$ is interpreted mod $2m+1$.
\end{lemma}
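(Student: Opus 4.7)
The plan is to extend the gluing/pairing rule that was already recorded in Section \ref{subsec: polygonal boundary} for turns between positive arcs $\gamma_i,\gamma_j$ so that it covers all four sign combinations of $\gamma_i^{\pm1}$ and $\gamma_j^{\pm1}$, and then to observe that each of the four asserted equalities is just this pairing rule after summing over the winding number $\kappa\in C$.

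The starting point is the following principle, extracted from the normal form. Suppose a turn $\tau\subset F$ has endpoints $p_1\in B_1$ and $p_2\in B_2$ on $w$-boundary components of $S$. Cutting along all turns places $\tau$ on the polygonal boundaries of two pieces $P$ and $P'$ (one on each side). On $P$ one reads, in the induced cyclic orientation, the arc $\alpha$ ending into $\tau$ at $p_1$, the turn $\tau$, and the arc $\alpha'$ starting out of $\tau$ at $p_2$; on $P'$ one instead reads the arc $\tilde\alpha$ ending at $p_2$ and the arc $\tilde\alpha'$ starting at $p_1$. Moreover $\tilde\alpha'$ is the arc on $B_1$ immediately \emph{following} $\alpha$ (in the cyclic order given by the orientation of $B_1$), while $\tilde\alpha$ is the arc on $B_2$ immediately \emph{preceding} $\alpha'$ (in the orientation of $B_2$). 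The winding number on $P'$ is $\kappa^{-1}$ because the two sides of $\tau$ inherit opposite orientations from the pieces.

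Translating ``following'' and ``preceding'' to indices is where the sign of the $w$-boundary enters. On a positive $w$-boundary the arcs appear in the cyclic order $\gamma_1,\gamma_2,\dots,\gamma_{2m+1}$, so the arc after $\gamma_i$ is $\gamma_{i+1}$ and the arc before $\gamma_j$ is $\gamma_{j-1}$ (indices mod $2m+1$). On a negative $w$-boundary, which maps to $\bar\gamma$, the arcs appear in the reversed cyclic order $\gamma_{2m+1}^{-1},\gamma_{2m}^{-1},\dots,\gamma_1^{-1}$, so the arc after $\gamma_i^{-1}$ is $\gamma_{i-1}^{-1}$ and the arc before $\gamma_j^{-1}$ is $\gamma_{j+1}^{-1}$. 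Feeding these four local rules into the principle of the previous paragraph gives the four pairings
\begin{align*}
(\gamma_i,\kappa,\gamma_j) &\longleftrightarrow (\gamma_{j-1},\kappa^{-1},\gamma_{i+1}),\\
(\gamma_i,\kappa,\gamma_j^{-1}) &\longleftrightarrow (\gamma_{j+1}^{-1},\kappa^{-1},\gamma_{i+1}),\\
(\gamma_i^{-1},\kappa,\gamma_j) &\longleftrightarrow (\gamma_{j-1},\kappa^{-1},\gamma_{i-1}^{-1}),\\
(\gamma_i^{-1},\kappa,\gamma_j^{-1}) &\longleftrightarrow (\gamma_{j+1}^{-1},\kappa^{-1},\gamma_{i-1}^{-1}).
\end{align*}

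Finally, each turn is glued to exactly one other turn, so the number of turns of any given type equals the number of turns of its paired type; this gives, for example, $t_{(\gamma_i,\kappa,\gamma_j)}=t_{(\gamma_{j-1},\kappa^{-1},\gamma_{i+1})}$. Summing the first equation over $\kappa\in C$ and using that $\kappa\mapsto\kappa^{-1}$ is a bijection of $C$ yields $t_{i,j}=t_{j-1,i+1}$, and the remaining three identities follow the same way from the other three pairings. The only real obstacle is the bookkeeping for signs and indices on negative $w$-boundaries; once the rule ``after $\gamma_i^{-1}$ comes $\gamma_{i-1}^{-1}$, before $\gamma_j^{-1}$ comes $\gamma_{j+1}^{-1}$'' is in hand, the four statements fall out uniformly.
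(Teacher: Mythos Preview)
Your proof is correct and follows essentially the same approach as the paper: both use the gluing condition $t_T=t_{T'}$ for paired turn types and then sum over $\kappa\in C$. The only difference is that you spell out all four pairings and the ``following/preceding'' bookkeeping on negative $w$-boundaries explicitly, whereas the paper states the first pairing, gives one further example, and leaves the rest to ``a similar reason.''
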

\begin{proof}
    Recall that, by the gluing condition, we have $t_T=t_{T'}$ for paired turn types $T,T'$. A turn of type $(\gamma_i,\kappa,\gamma_j)$ is paired with $(\gamma_{j-1},\kappa^{-1},\gamma_{i+1})$ for any $\kappa\in C$; see Figure \ref{fig: piecesturns}. Taking the sum over all $\kappa\in C$ proves the first equality.
    
    The others hold for a similar reason. For instance, a turn of type $(\gamma_i,\kappa,\gamma^{-1}_j)$ is paired with $(\gamma^{-1}_{j+1},\kappa^{-1},\gamma_{i+1})$ for any $\kappa\in C$.
\end{proof}

\begin{lemma}\label{lemma: t equality from normalizing}
    For any $1\le i\le 2m+1$, we have 
    $$\sum_{j=-(2m+1)}^{2m+1} t_{i,j}=\sum_{j=-(2m+1)}^{2m+1} t_{j,i}=\frac{1}{2}\deg(S).$$
    The same holds with $i$ replaced by $-i$.
\end{lemma}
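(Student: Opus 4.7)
The plan is to interpret each of the two sums combinatorially as counts on the $w$-boundary of $S$, and then invoke Lemma~\ref{lemma: half degree}.

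First, I would recall the structure of the simple normal form. Each polygonal boundary of a piece is a cyclic sequence of sides alternating between arcs (copies of some $\gamma_k^{\pm 1}$) and turns. In particular, the right endpoint of every arc on a polygonal boundary is followed by exactly one turn, and the left endpoint is preceded by exactly one turn. Summing over all polygonal boundaries, the number $\sum_{j=-(2m+1)}^{2m+1} t_{i,j}$ counts turns of the form $(\gamma_i,\kappa,\gamma_j^{\pm 1})$, which is exactly the total number of arcs across all polygonal boundaries of $S$ that are copies of $\gamma_i$. Similarly, $\sum_{j} t_{j,i}$ counts copies of $\gamma_i$ (arcs ending at a turn), and the two $-i$ sums count copies of $\gamma_i^{-1}$.

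Next, I would compute the total number of copies of $\gamma_i$ directly from the $w$-boundary. Every arc in a polygonal boundary comes from a segment of some $w$-boundary component cut by $F=f^{-1}(X_C)$. A positive $w$-boundary component representing $w^n$ with $n\in \Z_+$ is a degree-$n$ cover of the tight loop $\gamma$, so it is divided into $n\cdot |w|$ arcs which contribute exactly $n$ copies of $\gamma_i$ (and zero copies of $\gamma_i^{-1}$), since $\gamma_i$ appears exactly once in the cyclic decomposition of $\gamma$. A negative $w$-boundary representing $w^{-n}$ with $n\in\Z_+$ is a degree-$n$ cover of $\bar\gamma$, contributing $n$ copies of $\gamma_i^{-1}$ and no copies of $\gamma_i$. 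Summing over all $w$-boundaries of $S$, the total number of copies of $\gamma_i$ is $\deg_+(S)$ and the total number of copies of $\gamma_i^{-1}$ is $\deg_-(S)$.

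Finally, since $w=a_1t^{-1}b_1t\cdots a_m t^{-1}b_m t x t$ has $p(w)=1\neq 0$, Lemma~\ref{lemma: half degree} gives $\deg_+(S)=\deg_-(S)=\tfrac{1}{2}\deg(S)$. Combining with the two previous steps yields
\[
\sum_{j=-(2m+1)}^{2m+1} t_{i,j}\;=\;\sum_{j=-(2m+1)}^{2m+1} t_{j,i}\;=\;\deg_+(S)\;=\;\tfrac{1}{2}\deg(S),
\]
and the identical argument with $\gamma_i^{-1}$ in place of $\gamma_i$ handles the $-i$ case. The main (and only mild) subtlety is the bookkeeping that every arc on a polygonal boundary has exactly one outgoing turn and one incoming turn; this follows directly from the alternating structure of polygonal boundaries described in Section~\ref{subsec: polygonal boundary}, so no real obstacle arises.
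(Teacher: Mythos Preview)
Your proof is correct and follows essentially the same approach as the paper: interpret the sums as the number of copies of $\gamma_i$ (respectively $\gamma_i^{-1}$) appearing as arcs on polygonal boundaries, identify this with $\deg_+(S)$ (respectively $\deg_-(S)$), and then apply Lemma~\ref{lemma: half degree}. You supply a bit more detail on the covering count, but the argument is the same.
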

\begin{proof}
    The first summation is the total number of turns starting from $\gamma_i$, which is exactly the total number of copies of $\gamma_i$ that appear on $\partial S$, which is $\deg_+(S)$. By Lemma \ref{lemma: half degree}, we have $\deg_+(S)=\frac{1}{2}\deg(S)$ since $p(w)=1$ for the projection $p: H\to \Z$ taking the standard generator $t$ to $1$.
    
    The second summation is the total number of turns ending at $\gamma_i$ and thus is equal to the previous one.
    
    If we replace $i$ by $-i$, the same argument above holds with $\gamma_i$  and $\deg_+(S)$ replaced by $\gamma_i^{-1}$ and $\deg_-(S)$ respectively.
\end{proof}

\begin{lemma}\label{lemma: vanishing}
    For any $1\le i\le 2m$, we have $t_{i,i+1}=0$, $t_{-(i+1),-i}=0$, and $t_{2m+1,1}=t_{-1,-(2m+1)}=0$. 
\end{lemma}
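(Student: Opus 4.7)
The plan is to observe that this lemma is a direct consequence of the side-coherence requirement for admissible turn types, combined with the fact that between any two consecutive arcs $\gamma_i, \gamma_{i+1}$ along the loop $\gamma$ there is exactly one letter $t^{\pm 1}$, which crosses the edge space $X_C$ exactly once. So the end-side of $\gamma_i$ and the start-side of $\gamma_{i+1}$ are always on opposite sides of $X_C$, which rules out an admissible turn between them.

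Concretely, I would first read off the exponents: for our $w = a_1 t^{-1} b_1 t a_2 t^{-1} b_2 t \cdots a_m t^{-1} b_m t x t$, one has $e_{2i-1} = -1$ for $1 \le i \le m$ and $e_{2i} = +1$ for $1 \le i \le m$, together with $e_{2m+1} = +1$ (and $e_0 = e_{2m+1} = +1$ for the cyclic convention). By the bookkeeping of Section~\ref{subsec: polygonal boundary} (following Section~\ref{subsec: setup}), $\gamma_i$ ends on the $P$ side of $X_C$ iff $e_i = -1$, while $\gamma_{i+1}$ starts on the $P$ side iff $e_{(i+1)-1} = e_i = +1$. These conditions are mutually exclusive, so $\gamma_i$ and $\gamma_{i+1}$ abut $X_C$ from opposite sides.

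Now an admissible turn type $(\gamma_i, \kappa, \gamma_{i+1})$ requires, by the first item in Section~\ref{subsec: possible pieces}, that the turn lie on a single side of $X_C$, i.e.\ that $\gamma_i$'s end-side and $\gamma_{i+1}$'s start-side coincide. By the previous paragraph this never happens, so no such admissible turn type exists and $t_{i,i+1}=0$ for every $1 \le i \le 2m$. The same argument, applied with indices taken mod $2m+1$ (so that $\gamma_{2m+1}$ ends on the $N$ side since $e_{2m+1}=+1$, while $\gamma_1$ starts on the $P$ side since $e_{2m+1}=+1$), gives $t_{2m+1,1}=0$.

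For the reversed versions, $\gamma_{i+1}^{-1}$ ends on the $P$ side iff $\gamma_{i+1}$ starts on the $P$ side (iff $e_i = +1$), while $\gamma_i^{-1}$ starts on the $P$ side iff $\gamma_i$ ends on the $P$ side (iff $e_i = -1$). These are again mutually exclusive, so no admissible turn of type $(\gamma_{i+1}^{-1}, \kappa, \gamma_i^{-1})$ exists, giving $t_{-(i+1),-i}=0$ for $1 \le i \le 2m$, and the wrap-around case $t_{-1,-(2m+1)}=0$ follows identically. There is no real obstacle here; the whole lemma is just a careful unfolding of the positive/negative side bookkeeping, and neither the $n$-RTF/$n$-RF hypotheses nor boundary incompressibility are needed for this step.
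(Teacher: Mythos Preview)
Your proposal is correct and follows essentially the same approach as the paper: both argue that the relevant turn types are not admissible because $\gamma_i$ ends on one side of $X_C$ while $\gamma_{i+1}$ starts on the opposite side, so the side-coherence requirement fails. Your version spells out the exponents $e_i$ explicitly, while the paper's proof simply observes the general fact that consecutive arcs along $\gamma$ abut $X_C$ from opposite sides (which holds for any tight loop, independent of the special form of $w$); but the content is the same.
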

\begin{proof}
    The turn type $(\gamma_i,\kappa,\gamma_{i+1})$ is not admissible for any $\kappa\in C$, since if $\gamma_i$ ends on one side of the edge space $X_C$ then $\gamma_{i+1}$ starts on the other side. This shows $t_{i,i+1}=0$. The others hold for a similar reason.
\end{proof}

Now we compute the four summations on the right hand side of equation (\ref{eqn: four terms}), starting with the first two. By the definition of the cost function in (\ref{eqn: cost}), we have
\begin{align}\label{eqn: 1/4}
\begin{split}
    \sum_{i,j=1}^{2m+1} c_{i,j}t_{i,j}  &= \sum_{i=1}^{2m}\sum_{j>i} \left(1-\frac{1}{n}\right) t_{i,j} + \sum_{i=1}^{2m}\sum_{1\le j\le i} \frac{1}{n}t_{i,j}\\
    &= \left( 1-\frac{2}{n}\right) \sum_{i=1}^{2m}\sum_{j>i} t_{i,j}
    + \frac{1}{n} \sum_{i=1}^{2m}\sum_{j=1}^{2m+1}t_{i,j},\\
    &= \left( 1-\frac{2}{n}\right) \rom{1}_1 +\frac{1}{n}\rom{2}_1,
\end{split}
\end{align}

where $\rom{1}_1=\sum_{i=1}^{2m}\sum_{j>i} t_{i,j}$ and $\rom{2}_1=\sum_{i=1}^{2m}\sum_{j=1}^{2m+1}t_{i,j}$.

We also have
\begin{align}\label{eqn: 2/4}
\begin{split}
    \sum_{i,j=1}^{2m+1} c_{i,-j}t_{i,-j}    &=\sum_{i=1}^{2m}\sum_{j=1}^{2m}\left(1-\frac{1}{n}\right)t_{i,-j} - \left(1-\frac{1}{n}\right)t_{2m,-2m}\\
    &=\frac{1}{n}\sum_{i=1}^{2m}\sum_{j=1}^{2m+1}t_{i,-j}+\sum_{i=1}^{2m}\sum_{j=1}^{2m}\left(1-\frac{2}{n}\right)t_{i,-j} -\frac{1}{n}\sum_{i=1}^{2m}t_{i,-(2m+1)}\\
    &- \left(1-\frac{1}{n}\right)t_{2m,-2m}\\
    &=\frac{1}{n}\rom{2}_2 + \left(1-\frac{2}{n}\right)\rom{1}_2 -\frac{1}{n}\rom{3}_1
    - \left(1-\frac{1}{n}\right)t_{2m,-2m},
\end{split}
\end{align}
where $\rom{1}_2=\sum_{i=1}^{2m}\sum_{j=1}^{2m}t_{i,-j}$, $\rom{2}_2=\sum_{i=1}^{2m}\sum_{j=1}^{2m+1}t_{i,-j}$, and $\rom{3}_1=\sum_{i=1}^{2m}t_{i,-(2m+1)}$.

Putting them together, we deduce
\begin{lemma}\label{lemma: first half}
$$
    \sum_{i,j=1}^{2m+1} c_{i,j}t_{i,j}+\sum_{i,j=1}^{2m+1} c_{i,-j}t_{i,-j}   =\left(1-\frac{2}{n}\right)(\rom{1}_1+\rom{1}_2)+\frac{m}{n}\deg(S)
    -\frac{1}{n}\rom{3}_1 - \left(1-\frac{1}{n}\right)t_{2m,-2m}.
$$
\end{lemma}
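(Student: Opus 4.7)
The plan is simply to add the two already-computed expressions displayed in equations (\ref{eqn: 1/4}) and (\ref{eqn: 2/4}) and then to simplify the combined $\frac{1}{n}$-coefficient term $\frac{1}{n}(\rom{2}_1+\rom{2}_2)$ using the normalization from Lemma \ref{lemma: t equality from normalizing}. Nothing new about the cost function is needed; the only thing to verify is an identity about turn counts.

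First I would add (\ref{eqn: 1/4}) and (\ref{eqn: 2/4}) term by term. The $(1-\tfrac{2}{n})$-contributions combine to $(1-\tfrac{2}{n})(\rom{1}_1+\rom{1}_2)$, and the last two summands $-\tfrac{1}{n}\rom{3}_1$ and $-(1-\tfrac{1}{n})t_{2m,-2m}$ appear exactly as in the target formula. So the entire content of the lemma reduces to the identity
\begin{equation*}
\rom{2}_1+\rom{2}_2 \;=\; m\,\deg(S),
\end{equation*}
where, with the convention $t_{i,0}=0$,
\begin{equation*}
\rom{2}_1+\rom{2}_2 \;=\; \sum_{i=1}^{2m}\sum_{j=1}^{2m+1}t_{i,j} \;+\; \sum_{i=1}^{2m}\sum_{j=1}^{2m+1}t_{i,-j} \;=\; \sum_{i=1}^{2m}\sum_{j=-(2m+1)}^{2m+1}t_{i,j}.
\end{equation*}

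Next I would apply Lemma \ref{lemma: t equality from normalizing}: for each fixed $1\le i\le 2m$, the inner sum equals $\tfrac{1}{2}\deg(S)$ because it counts all turns emanating from an arc of type $\gamma_i$, i.e.\ the total number of copies of $\gamma_i$ appearing on $\partial S$, which is $\deg_+(S)=\tfrac{1}{2}\deg(S)$ by Lemma \ref{lemma: half degree}. Summing over the $2m$ values of $i$ yields $2m\cdot \tfrac{1}{2}\deg(S)=m\deg(S)$, as required.

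I do not foresee a real obstacle here; the step is purely bookkeeping. The only thing to be careful about is that the normalization lemma is summed only over $i\in\{1,\ldots,2m\}$ (not $i=2m+1$), which is exactly the range used in the definitions of $\rom{2}_1$ and $\rom{2}_2$, so the coefficient works out to $m$ rather than $m+\tfrac{1}{2}$. Once this identity is in place, substituting it into the sum of (\ref{eqn: 1/4}) and (\ref{eqn: 2/4}) gives the asserted formula.
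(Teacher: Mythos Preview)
Your proposal is correct and follows essentially the same route as the paper: add equations (\ref{eqn: 1/4}) and (\ref{eqn: 2/4}) and reduce to the identity $\rom{2}_1+\rom{2}_2=m\deg(S)$, which is verified by applying Lemma \ref{lemma: t equality from normalizing} for each $1\le i\le 2m$. The paper's proof is in fact even terser than yours but uses exactly this argument.
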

\begin{proof}
    Note by Lemma \ref{lemma: t equality from normalizing} we have $\sum_{j=1}^{2m+1} t_{i,j}+\sum_{j=1}^{2m+1}t_{i,-j}=\frac{1}{2}\deg(S)$ for any $1\le i\le 2m$. Hence
    $$\rom{2}_1+\rom{2}_2=2m\cdot \frac{1}{2}\deg(S)=m\deg(S),$$
    and the result follows by combining equations (\ref{eqn: 1/4}) and (\ref{eqn: 2/4}).
\end{proof}

Similarly, we compute the third and fourth summation in (\ref{eqn: four terms}).

\begin{align}\label{eqn: 3/4}
    \sum_{i,j=1}^{2m+1}c_{-i,j} t_{-i,j}=\frac{1}{n}\sum_{i,j=1}^{2m+1} t_{-i,j}+ \left(1-\frac{1}{n}\right)t_{-(2m+1),2m+1} -\left(1-\frac{1}{n}\right) t_{-1,1}.
\end{align}

\begin{align}\label{eqn: 4/4}
\begin{split}
    \sum_{i,j=1}^{2m+1}c_{-i,-j}t_{-i,-j} &=\sum_{i=1}^{2m+1}\sum_{1 \le j<i} \left(1-\frac{1}{n}\right)t_{-i,-j} + \frac{1}{n} \sum_{i=1}^{2m+1}\sum_{j=i}^{2m}t_{-i,-j}\\
    &=\sum_{i=1}^{2m+1}\sum_{1 \le j<i} \left(1-\frac{2}{n}\right)t_{-i,-j} + \frac{1}{n} \sum_{i=1}^{2m+1}\sum_{j=1}^{2m+1} t_{-i,-j} -\frac{1}{n}\sum_{i=1}^{2m+1}t_{-i,-(2m+1)}\\
    &=\left(1-\frac{2}{n}\right) \rom{1}_3 + \frac{1}{n}\sum_{i,j=1}^{2m+1} t_{-i,-j} -\frac{1}{n}\rom{3}_2,
\end{split}
\end{align}
where $\rom{1}_3=\sum_{i=1}^{2m+1}\sum_{1 \le j<i} t_{-i,-j}$ and $\rom{3}_2=\sum_{i=1}^{2m+1}t_{-i,-(2m+1)}$.

Putting these two together, we get
\begin{lemma}\label{lemma: second half}
\begin{align*}
    \sum_{i,j=1}^{2m+1}c_{-i,j} t_{-i,j}+\sum_{i,j=1}^{2m+1}c_{-i,-j}t_{-i,-j}
    &=\frac{2m+1}{2n}\deg(S) 
    +\left(1-\frac{2}{n}\right) \rom{1}_3
    -\frac{1}{n}\rom{3}_2\\
    &+\left(1-\frac{1}{n}\right)t_{-(2m+1),2m+1} -\left(1-\frac{1}{n}\right) t_{-1,1}
\end{align*}
\end{lemma}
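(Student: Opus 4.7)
The plan is to simply add the two expressions in equations (\ref{eqn: 3/4}) and (\ref{eqn: 4/4}) and then simplify the combined coefficient of $1/n$ using Lemma \ref{lemma: t equality from normalizing}, in complete analogy with the proof of Lemma \ref{lemma: first half}. All of the structural terms on the right-hand side of the statement are already present in (\ref{eqn: 3/4}) and (\ref{eqn: 4/4}) — namely $\left(1-\tfrac{2}{n}\right)\rom{1}_3$, $-\tfrac{1}{n}\rom{3}_2$, $\left(1-\tfrac{1}{n}\right)t_{-(2m+1),2m+1}$, and $-\left(1-\tfrac{1}{n}\right)t_{-1,1}$ — and they carry across the sum unchanged. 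The only thing that genuinely needs computing is the leading term $\frac{2m+1}{2n}\deg(S)$, which must come from combining $\frac{1}{n}\sum_{i,j=1}^{2m+1} t_{-i,j}$ and $\frac{1}{n}\sum_{i,j=1}^{2m+1} t_{-i,-j}$.

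For this step, I would fix $1\le i\le 2m+1$ and group the inner sums:
\[
\sum_{j=1}^{2m+1} t_{-i,j} + \sum_{j=1}^{2m+1} t_{-i,-j} = \sum_{j=-(2m+1)}^{2m+1} t_{-i,j},
\]
where I am using the convention $t_{-i,0}=0$ so that the $j=0$ term contributes nothing. By Lemma \ref{lemma: t equality from normalizing} (applied to the index $-i$), this equals $\tfrac{1}{2}\deg(S)$. Summing over the $2m+1$ values of $i$ and multiplying by $\tfrac{1}{n}$ yields $\tfrac{2m+1}{2n}\deg(S)$, exactly the constant coefficient appearing in the statement.

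Putting these two observations together gives the identity. There is no real obstacle here — this is a bookkeeping computation parallel to the derivation of Lemma \ref{lemma: first half}, and both key inputs (the gluing/normalizing identities of Lemmas \ref{lemma: t equality from gluing} and \ref{lemma: t equality from normalizing}) are already in place. The only minor care required is to make sure that the boundary correction terms $t_{-(2m+1),2m+1}$ and $t_{-1,1}$, as well as $\rom{3}_2$, which were split off in (\ref{eqn: 3/4}) and (\ref{eqn: 4/4}) precisely so that the remaining coefficients are uniform in $i,j$, are preserved verbatim in the sum; no further cancellation between them is expected or needed.
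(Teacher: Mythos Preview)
Your proposal is correct and follows essentially the same approach as the paper: add equations (\ref{eqn: 3/4}) and (\ref{eqn: 4/4}), carry the structural terms through unchanged, and combine $\frac{1}{n}\sum_{i,j} t_{-i,j}+\frac{1}{n}\sum_{i,j} t_{-i,-j}$ into $\frac{2m+1}{2n}\deg(S)$ via Lemma~\ref{lemma: t equality from normalizing} applied at each fixed $-i$. The only small remark is that Lemma~\ref{lemma: t equality from gluing} is not actually needed here; only the normalizing identity is used.
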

\begin{proof}
    Note that by Lemma \ref{lemma: t equality from normalizing} we have
    $$\sum_{j=1}^{2m+1}t_{-i,j}+\sum_{j=1}^{2m+1}t_{-i,-j}=\frac{1}{2}\deg(S)$$
    for any $1\le i\le 2m+1$.
    Hence 
    $$\sum_{i,j=1}^{2m+1} t_{-i,j}+\sum_{i,j=1}^{2m+1}t_{-i,-j}=\frac{2m+1}{2}\deg(S),$$
    and the result follows by combining equations (\ref{eqn: 3/4}) and (\ref{eqn: 4/4}).
\end{proof}

We now combine Lemmas \ref{lemma: first half} and \ref{lemma: second half} to complete the computation by equation (\ref{eqn: four terms}). To simplify the results, we make one further observation.

\begin{lemma}\label{lemma: staircase}
    $$\rom{1}_1+\rom{1}_2+\rom{1}_3=\frac{1}{2}\left[(2m-1)\deg(S)+t_{2m+1,-(2m+1)}+t_{-1,1}\right].$$
\end{lemma}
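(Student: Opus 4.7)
The plan is to apply the gluing involutions from Lemma \ref{lemma: t equality from gluing} separately to $\rom{1}_1$, $\rom{1}_2$, $\rom{1}_3$, obtaining complementary expressions whose average with the original gives an almost-full grid sum minus a controlled set of row and column boundary contributions, then to assemble these using Lemma \ref{lemma: t equality from normalizing}.

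For $\rom{1}_1 = \sum_{1\le i<j\le 2m+1} t_{i,j}$, substituting $t_{i,j} = t_{j-1, i+1}$ and reindexing gives an expression over the range $\{(i,j): j \le i+1\}$; after using $t_{i,i+1}=0$ from Lemma \ref{lemma: vanishing}, one finds $\rom{1}_1 = \sum_{2 \le j \le i \le 2m} t_{i,j}$, so that
$$2\rom{1}_1 = \sum_{i,j=1}^{2m+1} t_{i,j} - \sum_{i=1}^{2m+1} t_{i,1} - \sum_{j=1}^{2m+1} t_{2m+1,j},$$
with the identity $t_{2m+1,1}=0$ absorbing the corner overlap. The entirely analogous calculation for $\rom{1}_3$, using $t_{-i,-j}=t_{-(j+1),-(i-1)}$ together with $t_{-(i+1),-i}=0$ and $t_{-1,-(2m+1)}=0$, yields
$$2\rom{1}_3 = \sum_{i,j=1}^{2m+1} t_{-i,-j} - \sum_{j=1}^{2m+1} t_{-1,-j} - \sum_{i=1}^{2m+1} t_{-i,-(2m+1)}.$$

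For $\rom{1}_2 = \sum_{i,j=1}^{2m} t_{i,-j}$, the relation $t_{i,-j} = t_{-(j+1), i+1}$ sends the index range to $\{2 \le i, j \le 2m+1\}$, giving the alternative expression $\rom{1}_2 = \sum_{i,j=2}^{2m+1} t_{-i,j}$. Completing each of these two sums to a full $[1,2m+1]^2$ sum produces
$$2\rom{1}_2 = \sum_{i,j=1}^{2m+1}\bigl[t_{i,-j} + t_{-i,j}\bigr] + t_{2m+1,-(2m+1)} + t_{-1,1} - [\text{four row/column sums}],$$
where the two isolated corner terms survive as the inclusion-exclusion overlaps of the boundary subtractions at the corners $(2m+1, -(2m+1))$ (from the $t_{i,-j}$ piece) and $(-1, 1)$ (from the $t_{-i,j}$ piece), and crucially do not vanish by Lemma \ref{lemma: vanishing}.

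Summing the three identities, the main term is $\sum_{i,j\in\pm[1,2m+1]} t_{i,j} = (2m+1)\deg(S)$ by Lemma \ref{lemma: t equality from normalizing}. The eight boundary sums produced pair up naturally into four complete sums of the form $\sum_{k\in\pm[1,2m+1]} t_{i_0,k}$ or $\sum_{k\in\pm[1,2m+1]} t_{k,j_0}$ with $i_0 \in \{2m+1, -1\}$ and $j_0 \in \{1, -(2m+1)\}$, each equal to $\tfrac{1}{2}\deg(S)$ by normalizing, for a total contribution of $2\deg(S)$. Dividing the resulting equality by $2$ then yields the claimed identity. The main obstacle is purely combinatorial bookkeeping: correctly identifying which rows, columns and corner terms appear in each decomposition and verifying that the two corner terms coming from $\rom{1}_2$ are precisely $t_{2m+1,-(2m+1)}$ and $t_{-1,1}$, with no unaccounted cancellations among the boundary tally.
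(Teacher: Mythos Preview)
Your proposal is correct and follows essentially the same approach as the paper: apply the gluing identities from Lemma~\ref{lemma: t equality from gluing} to each of $\rom{1}_1,\rom{1}_2,\rom{1}_3$ to obtain complementary index ranges, add to get a full $(4m+2)\times(4m+2)$ grid minus four row/column pairs plus two corner terms, then evaluate using Lemma~\ref{lemma: t equality from normalizing} and the vanishing from Lemma~\ref{lemma: vanishing}. The only cosmetic difference is that the paper records $2\rom{1}_1$ and $2\rom{1}_3$ as rectangular sub-sums $\sum_{i=1}^{2m}\sum_{j=2}^{2m+1}t_{i,j}$ and $\sum_{i=2}^{2m+1}\sum_{j=1}^{2m}t_{-i,-j}$ before passing to the full grid, whereas you pass to ``full grid minus boundary'' for each term individually; the arithmetic is the same.
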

\begin{proof}
    Note by Lemma \ref{lemma: t equality from gluing} we have
    $$\rom{1}_1=\sum_{1\le i<j\le 2m+1}t_{i,j} = \sum_{1\le i<j\le 2m+1} t_{j-1,i+1}$$
    and 
    $$2\rom{1}_1=\sum_{1\le i<j\le 2m+1}t_{i,j} + \sum_{1\le i<j\le 2m+1} t_{j-1,i+1} =\sum_{i=1}^{2m}\sum_{j=2}^{2m+1}t_{i,j}+\sum_{i=1}^{2m} t_{i,i+1}.$$
    Since $t_{i,i+1}=0$ for all $1\le i\le 2m$ by Lemma \ref{lemma: vanishing}, we have
    $$2\rom{1}_1=\sum_{i=1}^{2m}\sum_{j=2}^{2m+1}t_{i,j}.$$
    
    A similar computation shows
    $$2\rom{1}_3=\sum_{i=2}^{2m+1}\sum_{j=1}^{2m}t_{-i,-j}.$$
    
    Lemma \ref{lemma: t equality from gluing} also implies
    $$\rom{1}_2=\sum_{i=1}^{2m}\sum_{j=1}^{2m}t_{i,-j}=\sum_{i=2}^{2m+1}\sum_{j=2}^{2m+1}t_{-i,j}.$$
    
    Combining these, we see that
    \begin{align*}
        2(\rom{1}_1+\rom{1}_2+\rom{1}_3)&=\sum_{i=1}^{2m}\sum_{j=2}^{2m+1}t_{i,j}+
    \sum_{i=1}^{2m}\sum_{j=1}^{2m}t_{i,-j}+\sum_{i=2}^{2m+1}\sum_{j=2}^{2m+1}t_{-i,j}+
    \sum_{i=2}^{2m+1}\sum_{j=1}^{2m}t_{-i,-j}\\
    &=\sum_{i=-(2m+1)}^{2m+1}\sum_{j=-(2m+1)}^{2m+1} t_{i,j}\\
    &-\sum_{i=-(2m+1)}^{2m+1} (t_{i,1} + t_{i, -(2m+1)}) - \sum_{j=-(2m+1)}^{2m+1} (t_{2m+1,j}+t_{-1,j})\\
    &+t_{2m+1,1}+t_{-1,1}+t_{2m+1,-(2m+1)}+t_{-1,-(2m+1)}.
    \end{align*}
    
    By Lemma \ref{lemma: vanishing} we have $t_{2m+1,1}=t_{-1,-(2m+1)}=0$. Combining this and Lemma \ref{lemma: t equality from normalizing}, the equation above yields
    $$ 2(\rom{1}_1+\rom{1}_2+\rom{1}_3)=(4m+2-4)\cdot\frac{1}{2}\deg(S)+t_{-1,1}+t_{2m+1,-(2m+1)},$$
    which is clearly equivalent to the desired formula.
\end{proof}

Now we are ready to prove Lemma \ref{lemma: sum}.
\begin{proof}[Proof of Lemma \ref{lemma: sum}]
    By equation (\ref{eqn: four terms}), using Lemmas \ref{lemma: first half}, \ref{lemma: second half}, and \ref{lemma: staircase}, we have
    \begin{align*}
        \sum_{T\in\Tcal} c(T) t_T
        &=\left(1-\frac{2}{n}\right)(\rom{1}_1+\rom{1}_2+\rom{1}_3)+\frac{m}{n}\deg(S)-\frac{1}{n}(\rom{3}_1+\rom{3}_2)-\left(1-\frac{1}{n}\right)t_{2m,-2m}\\
        &+\frac{2m+1}{2n}\deg(S)+\left(1-\frac{1}{n}\right)t_{-(2m+1),2m+1}-\left(1-\frac{1}{n}\right)t_{-1,1}.\\
        &=\frac{1}{2}\left(1-\frac{2}{n}\right)\left[(2m-1)\deg(S)+t_{2m+1,-(2m+1)}+t_{-1,1}\right]\\
        &+\frac{4m+1}{2n}\deg(S)-\frac{1}{n}(\rom{3}_1+\rom{3}_2)-\left(1-\frac{1}{n}\right)t_{2m,-2m}\\
        &+\left(1-\frac{1}{n}\right)t_{-(2m+1),2m+1}-\left(1-\frac{1}{n}\right)t_{-1,1}.
    \end{align*}
    
    Note that by Lemma \ref{lemma: t equality from normalizing} we have
    $$\rom{3}_1+\rom{3}_2=\sum_{i=1}^{2m}t_{i,-(2m+1)}+\sum_{i=1}^{2m+1}t_{-i,-(2m+1)}=\frac{1}{2}\deg(S)-t_{2m+1,-(2m+1)}.$$
    Substituting it in the equation above and simplifying, we have
    \begin{align*}
        \sum_{T\in\Tcal} c(T) t_T
        &=\left(\frac{2m-1}{2}+\frac{1}{n}\right)\deg(S)\\
        &+\frac{1}{2}t_{2m+1,-(2m+1)}-\frac{1}{2}t_{-1,1}+\left(1-\frac{1}{n}\right)(t_{-(2m+1),2m+1}-t_{2m,-2m}).
    \end{align*}
    Since $t_{2m+1,-(2m+1)}=t_{-1,1}$ and $t_{-(2m+1),2m+1}=t_{2m,-2m}$ by Lemma \ref{lemma: t equality from gluing}, using $|w|=2m+1$ we obtain
    $$\sum_{T\in\Tcal} c(T) t_T
        =\left(\frac{|w|}{2}-1+\frac{1}{n}\right)\deg(S).$$
\end{proof}

\subsection{Proof of Theorem \ref{thm: HNN main}}
Now we are in a place to prove Theorem \ref{thm: HNN main}.
\begin{proof}[Proof of Theorem \ref{thm: HNN main}]
    By Lemmas \ref{lemma: comparison with chi} and \ref{lemma: sum}, we may apply the LP duality method (Proposition \ref{prop: LP dual}) with $\lambda=|w|/2-1+1/n$ using the cost function $c$ we defined above. Then Proposition \ref{prop: LP dual} shows that
    $$-\chi(S)\ge \left(1-\frac{1}{n}\right)\deg(S)$$
    for any boundary-incompressible $w$-admissible surface $S$ as desired.
\end{proof}

\subsection{The $n$-RF condition}\label{subsec: n-RF}
We give a brief discussion on the $n$-RF condition (Definition \ref{def: n-RF}) in this subsection. 

The following observations easily follow from the definition:
\begin{lemma}\label{lemma: basic n-RF}
	\leavevmode
	\begin{enumerate}
		\item If $a\in A\setminus C$ is $n$-RF rel $C$, then it is $m$-$RF$ rel $C$ for any $m\le n$.\label{item: n-RF to m-RF}
		\item For a chain of subgroup $C\le B\le A$, if $a\in A\setminus B$ is $n$-RF rel $B$, then it is also $n$-RF rel $C$.\label{item: subgroup inheritance}
		\item For a chain of subgroup $C\le B\le A$, if $(A,B)$ and $(B,C)$ are both $n$-RF for some $2\le n\le\infty$, then $(A,C)$ is also $n$-RF.\label{item: subgroup chain}
	\end{enumerate}
\end{lemma}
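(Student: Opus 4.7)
The plan is to prove each of the three items directly from Definition \ref{def: n-RF}, with item (3) following from a combination of items (1) and (2) applied to the two possible locations of $a$ relative to $B$.

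For item (1), I would unpack the definition and observe that being $m$-RF rel $C$ is a formally weaker statement than being $n$-RF rel $C$ when $m \le n$. Precisely, if a word $a^{e_1}c_1\cdots a^{e_k}c_k$ satisfies the $m$-RF hypotheses — in particular condition (2), that there are no $m$ exponents $e_i$ of the same sign — then it also satisfies the $n$-RF hypotheses, because having $n$ exponents of the same sign would in particular give $m$ such exponents. So the conclusion that the word is nontrivial transfers immediately.

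For item (2), I would note that $a \in A\setminus B$ forces $a \in A \setminus C$ since $C \le B$. Then any word $a^{e_1}c_1\cdots a^{e_k}c_k$ with $c_i \in C$ satisfying the two $n$-RF conditions is, via the inclusion $C \hookrightarrow B$, a word of exactly the form tested by the $n$-RF condition for the pair $(A,B)$, so nontriviality transfers.

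For item (3), I would split on whether $a \in A \setminus C$ lies in $B$ or not. If $a \in A\setminus B$, then $a$ is $n$-RF rel $B$ by the hypothesis that $(A,B)$ is $n$-RF, and item (2) promotes this to $n$-RF rel $C$. If instead $a \in B \setminus C$, then $a$ is $n$-RF rel $C$ as an element of the pair $(B,C)$; the key point to justify is that the relevant products $a^{e_1}c_1\cdots a^{e_k}c_k$ lie entirely in $B$ (since $a \in B$ and $c_i \in C \le B$), so nontriviality in $B$ implies nontriviality in $A$ via the inclusion $B \hookrightarrow A$. Combining the two cases gives that every $a \in A\setminus C$ is $n$-RF rel $C$, establishing that $(A,C)$ is $n$-RF.

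None of these steps presents a real obstacle; the only place where one has to be mildly careful is in item (3), to verify that the notion of $n$-RF rel $C$ for a single element $a \in B\setminus C$ computed inside $A$ genuinely coincides with the same notion computed inside $B$ — which is guaranteed by $B$ being a subgroup of $A$, so that equalities (or non-equalities) of words in $\{a^{\pm 1}\} \cup C$ are unaffected by the ambient group. Overall this is a definition-unfolding lemma and the entire argument should fit in a few lines.
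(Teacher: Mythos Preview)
Your proposal is correct and matches the paper's proof essentially line for line: the paper also says items (\ref{item: n-RF to m-RF}) and (\ref{item: subgroup inheritance}) follow directly from the definition, and for item (\ref{item: subgroup chain}) it splits on whether $a\in B$ (using that $(B,C)$ is $n$-RF) or $a\notin B$ (using that $(A,B)$ is $n$-RF and then item (\ref{item: subgroup inheritance})). One tiny inaccuracy in your plan: item (\ref{item: n-RF to m-RF}) is not actually needed in your proof of item (\ref{item: subgroup chain}), only item (\ref{item: subgroup inheritance}) is.
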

\begin{proof}
	Items (\ref{item: n-RF to m-RF}) and (\ref{item: subgroup inheritance}) follow directly from the definition. 
	For (\ref{item: subgroup chain}), for any $a\in A\setminus C$, if $a\in B$, then it is $n$-RF rel $C$ since $(B,C)$ is $n$-RF.
	If $a\notin B$, then it is $n$-RF rel $B$ since $(A,B)$ is $n$-RF, and hence it is $n$-RF rel $C$ by item (\ref{item: subgroup inheritance}).
\end{proof}

The next lemma reveals that the $2$-RF condition is related to malnormality of $C$. Recall that a subgroup $C\le A$ is called malnormal if $aCa^{-1}\cap C=id$ for all $a\notin C$.
\begin{lemma}\label{lemma: 2-RF}
	An element $a\in A\setminus C$ is $2$-RF rel $C$ if and only if $aCa^{-1}\cap C=id$. So $(A,C)$ is $2$-RF if and only if $C$ is malnormal in $A$.
	In particular, if $(A,C)$ is $n$-RF for some $n\ge2$, then $C$ is malnormal.
\end{lemma}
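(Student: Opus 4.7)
The plan is to unpack Definition \ref{def: n-RF} in the case $n=2$ and show it collapses to the malnormality condition.

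First I would observe that condition (2) in the definition of $n$-RF, for $n=2$, forces the multiset $\{e_1,\dots,e_k\}$ to contain at most one $+1$ and at most one $-1$; hence $k\in\{1,2\}$, and if $k=2$ the two exponents must have opposite signs. So for a fixed $a\in A\setminus C$, the 2-RF condition amounts to a finite list of nontriviality requirements coming from these $k=1$ and $k=2$ cases.

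Next I would handle the $k=1$ case: the word $a^{e_1}c_1$ with $a\notin C$ can never equal the identity (since $a=c_1^{-e_1}\in C$ would be a contradiction), so $k=1$ is automatic and imposes no constraint. For $k=2$, up to swapping we take $e_1=1$, $e_2=-1$; cyclic indexing turns condition (1) into $c_1\neq id$ (from $e_1=-e_2$) and $c_2\neq id$ (from $e_2=-e_3=-e_1$). The remaining requirement $ac_1a^{-1}c_2\neq id$ for all nontrivial $c_1,c_2\in C$ is equivalent to $ac_1a^{-1}\notin C$ for every $c_1\in C\setminus\{id\}$, i.e., $aCa^{-1}\cap C=\{id\}$. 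The case $e_1=-1,e_2=1$ produces the symmetric condition $a^{-1}Ca\cap C=\{id\}$, which is equivalent after conjugating by $a$. This proves the biconditional for a single element, and quantifying over all $a\in A\setminus C$ yields the equivalence between $(A,C)$ being $2$-RF and $C$ being malnormal in $A$.

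For the last sentence, if $(A,C)$ is $n$-RF for some $n\ge 2$, then by part (\ref{item: n-RF to m-RF}) of Lemma \ref{lemma: basic n-RF} it is also $2$-RF, and hence $C$ is malnormal by what was just proved.

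The argument is entirely bookkeeping of the combinatorial definition, so there is no real obstacle; the only mildly subtle point is being careful with the cyclic indexing in condition (1) of Definition \ref{def: n-RF} when $k=1$ (where the cyclic ``next'' exponent $e_{i+1}$ equals $e_i$ itself, making condition (1) vacuous), which is why the $k=1$ case contributes nothing beyond the hypothesis $a\notin C$.
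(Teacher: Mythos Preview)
Your proof is correct and follows essentially the same approach as the paper: both unpack Definition \ref{def: n-RF} for $n=2$, reduce to the single constraint $ac_1a^{-1}c_2\neq id$ with $c_1,c_2\neq id$, and identify this with $aCa^{-1}\cap C=\{id\}$. Your version is more explicit about why the $k=1$ case is vacuous and about the cyclic indexing in condition (1), whereas the paper simply asserts that only the $k=2$ constraint remains; but the content is the same.
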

\begin{proof}
	The definition of $2$-RF only requires that $ac_1a^{-1}c_2\neq id$ when $c_1,c_2\neq id$ (and an equivalent equation with $a$ and $a^{-1}$ swapped). That is, if $ac_1 a^{-1}=c_2^{-1}$, then either $c_1$ or $c_2$ is the identity, in which case we have $c_1=c_2=id$ as they are conjugate. Hence this is equivalent to that $aCa^{-1}\cap C=id$. The other assertions easily follow from Lemma \ref{lemma: basic n-RF}.
\end{proof}

It is also easy to observe that being $n$-RF rel $C$ is really a condition on the double coset $CaC$.
\begin{lemma}\label{lemma: double coset}
	If $a\in A\setminus C$ is $n$-RF (resp. $n$-RTF) rel $C$, then so is any $\tilde{a}\in CaC$.
\end{lemma}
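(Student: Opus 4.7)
The plan is to show that for $\tilde{a} = c_0 a c_0' \in CaC$ with $c_0, c_0' \in C$, any potential counterexample word in $\tilde{a}$ and $C$ can be converted, by a simple conjugation trick, into a word in $a$ and $C$ satisfying the same hypotheses, so the $n$-RF (resp.\ $n$-RTF) property of $a$ transfers directly to $\tilde{a}$.

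First I would note that $\tilde{a} \in A \setminus C$ (otherwise $a \in C$), so the statement makes sense. Then I would write $\tilde{a}^{e_i} = \alpha_i\, a^{e_i}\, \beta_i$, where $(\alpha_i,\beta_i) = (c_0,c_0')$ if $e_i = 1$ and $(\alpha_i,\beta_i) = (c_0'^{-1},c_0^{-1})$ if $e_i = -1$. Given a putative relation $\tilde{a}^{e_1}c_1\cdots \tilde{a}^{e_k}c_k = 1$ satisfying the two conditions in Definition \ref{def: n-RF}, conjugate by $\alpha_1^{-1}$ and collect consecutive $C$-factors to obtain
$$a^{e_1} c_1' a^{e_2} c_2' \cdots a^{e_k} c_k' = 1, \qquad \text{where } c_i' = \beta_i c_i \alpha_{i+1} \in C$$
with indices read cyclically. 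Since the exponents $e_i$ are unchanged, condition (2) of Definition \ref{def: n-RF} is preserved verbatim.

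The main (though routine) verification is condition (1): when $e_i = -e_{i+1}$, the new $c_i'$ must be nonidentity. A case check shows that in both subcases ($e_i = 1, e_{i+1} = -1$ giving $c_i' = c_0' c_i c_0'^{-1}$, and $e_i = -1, e_{i+1} = 1$ giving $c_i' = c_0^{-1} c_i c_0$) the element $c_i'$ is a conjugate of $c_i$ in $C$, hence nontrivial iff $c_i$ is. So the hypotheses on $(c_i')$ follow from those on $(c_i)$, and the $n$-RF property of $a$ yields a contradiction. I expect this case check to be the only real step, but it is essentially bookkeeping since $\alpha_i$ and $\beta_i$ at each junction fit together nicely.

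For the $n$-RTF variant, the same substitution applied to $\tilde{a}\, c_1 \cdots \tilde{a}\, c_k$ (all exponents $+1$) gives $\alpha_i = c_0$, $\beta_i = c_0'$ throughout, so conjugating by $c_0^{-1}$ yields $a\, c_1' \cdots a\, c_k' = 1$ with $c_i' = c_0' c_i c_0 \in C$, and the hypothesis $k < n$ is unchanged, so $n$-RTF of $a$ finishes the argument. Thus both assertions of the lemma reduce to the already-assumed properties of $a$.
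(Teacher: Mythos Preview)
Your proof is correct and follows essentially the same approach as the paper: write $\tilde{a}=c_0ac_0'$, substitute into a putative relation, and absorb the flanking $C$-elements into the $c_i$'s to obtain a word of the same shape in $a$. The paper lists the four cases for the new $c_i$ explicitly (your $\alpha_i,\beta_i$ notation packages these uniformly), and both proofs observe that in the sign-change cases the new $c_i$ is a $C$-conjugate of the old one, hence nontrivial.
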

\begin{proof}
	Let $\tilde{a}=cac'$ and suppose $\tilde{a}^{e_1}\tilde{c}_1\cdots \tilde{a}^{e_k}\tilde{c}_k=id$ for some $k\ge1$, $e_i=\pm1$ and $\tilde{c}_i\in C$, where $\tilde{c}_i\neq id$ if $e_i=-e_{i+1}$. The goal is to show that there are at least $n$ $e_i$'s of the same sign. Replacing each $\tilde{a}$ (resp. $\tilde{a}^{-1}$) by $cac'$ (resp. $c'^{-1}a^{-1}c^{-1}$), the equation can be rewritten as $a^{e_1}c_1\cdots a^{e_k}c_k=id$, where
	\[
		c_i=\left\{\begin{array}{ll}
			c'\tilde{c}_ic			&\text{if }e_i=e_{i+1}=1;\\
			c'\tilde{c}_ic'^{-1}	&\text{if }e_i=1, e_{i+1}=-1;\\
			c^{-1} \tilde{c}_i c	&\text{if }e_i=-1, e_{i+1}=1;\\
			c^{-1} \tilde{c}_i c'^{-1}	&\text{if }e_i=e_{i+1}=-1.
		\end{array}
		\right.
	\]
	Hence when $e_i=-e_{i+1}$, we have $c_i\neq id$ as it is conjugate to $\tilde{c}_i$. Since $a$ is $n$-RF, there must be at least $n$ $e_i$'s of the same sign as desired. The same proof works for the $n$-RTF condition.
\end{proof}

Here is the main proposition of this section, which we need in Section \ref{sec: app}. Such inheritance should hold more generally for graphs of groups, but we just focus on the case of an amalgam. Similar inheritance for graphs of groups holds for the $n$-RTF condition; see \cite[Corollary 3.17 and Lemma 3.18]{CH:sclgap}.
\begin{proposition}\label{prop: inheritance}
	Consider an amalgam $G=A\star_C B$. If for some $2\le n\le \infty$ both $(A,C)$ and $(B,C)$ are $n$-RF, then $(G,A)$ and $(G,B)$ are $n$-RF.
\end{proposition}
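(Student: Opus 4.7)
By symmetry it suffices to prove $(G, A)$ is $n$-RF. Fix $g \in G \setminus A$; using Lemma \ref{lemma: double coset} we may replace $g$ with any representative of its double coset $AgA$, so we may assume $g$ has the reduced amalgam normal form $g = b_1 a_1 b_2 a_2 \cdots a_{m-1} b_m$ with $m \ge 1$, each $b_j \in B \setminus C$, and each $a_j \in A \setminus C$. Suppose
\[
W = g^{e_1}\alpha_1 g^{e_2}\alpha_2 \cdots g^{e_k}\alpha_k = \mathrm{id}
\]
with $\alpha_i \in A$ and $\alpha_i \ne \mathrm{id}$ whenever $e_i = -e_{i+1}$ (indices mod $k$). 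Substituting the normal forms of $g^{\pm 1}$ expands $W$ into an alternating word of length $2km$ in $A$- and $B$-letters, all of whose letters coming from the $g^{\pm 1}$-substitutions lie in $A \setminus C$ or $B \setminus C$. By the normal form theorem for amalgams, $W = \mathrm{id}$ forces every interior $\alpha_i$ (for $1 \le i \le k-1$) to lie in $C$.

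In the case where some $e_i = -e_{i+1}$, the $i$-th joint produces a subword of the form $b_m \alpha_i b_m^{-1}$ (if $e_i = 1$, $e_{i+1} = -1$) or $b_1^{-1}\alpha_i b_1$ (for the reverse signs), flanked by non-$C$ $a$-letters from $g$. For the overall reduction to complete, the absorbed $B$-letter $\tilde b_i$ must itself lie in $C$; but $\tilde b_i = b_m \alpha_i b_m^{-1} \in C$ combined with $\alpha_i \in C$ forces $\alpha_i \in C \cap b_m^{-1}C b_m = \{\mathrm{id}\}$, using the malnormality of $C$ in $B$ (a consequence of the $n$-RF hypothesis on $(B, C)$, via Lemma \ref{lemma: 2-RF}). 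This contradicts $\alpha_i \ne \mathrm{id}$, so this case cannot occur and all $e_i$'s must share a common sign. In the remaining case, where all $e_i$ equal the same sign (WLOG $+1$), one traces the cascade of absorptions through the internal structure of $g$: each joint $b_m \alpha_i b_1$ collapses to some $\tilde b_i \in B$ which must lie in $C$; absorbing the resulting $\tilde b_i$'s into neighbouring pairs of $a$-letters $(a_{m-1}, a_1)$ gives new $A$-letters that must also lie in $C$; continuing through pairs $(b_{m-1}, b_2)$, $(a_{m-2}, a_2)$, and so on, one descends $m - 1$ layers until the two arms of the pattern meet at a single inner letter $x$ of $g$, namely $b_{(m+1)/2}$ if $m$ is odd or $a_{m/2}$ if $m$ is even. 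The equation produced at this meeting has the form
\[
x\, c_1\, x\, c_2\, \cdots\, x\, c_k = \mathrm{id}
\]
with $c_i \in C$ and all $x$-exponents equal to $+1$; since the admissibility condition is vacuous (no sign changes), the $n$-RF hypothesis on the appropriate pair $(A, C)$ or $(B, C)$ applied to $x$ forces $k \ge n$, yielding the desired conclusion.

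The main obstacle is the combinatorial bookkeeping of the cascade in the second case: one must rigorously trace the $m-1$ layers of alternating $B$- and $A$-letter absorptions and verify that the center equation indeed involves the middle letter of $g$ exactly $k$ times. Each individual layer is a direct application of the normal form theorem combined with the enforced $C$-memberships, but the pattern of how the $k$ original joints amalgamate into a single-variable center equation requires care. The argument is analogous to, but more intricate than, the inheritance result for the $n$-RTF condition in \cite[Lemmas 5.17 and 5.18]{CH:sclgap}; the two-sided nature of $n$-RF is mostly absorbed by the malnormality argument in the first case, so no sign-tracking is needed in the second case.
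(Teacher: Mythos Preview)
Your overall strategy --- normalize $g$ via the double-coset lemma and analyse the cascade of reductions in the expanded alternating word --- is the same as the paper's. But Case~1 contains a genuine gap, and its conclusion is actually false.

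You assert that a sign change $e_i=-e_{i+1}$ forces a contradiction, hence all $e_i$ must agree. This is strictly stronger than what the proposition claims and is not true in general. Already for $m=1$ (so $g=b_1\in B\setminus C$; note your description ``flanked by non-$C$ $a$-letters'' breaks down here), once the $\alpha_i$ are in $C$ the equation $W=\mathrm{id}$ becomes a relation $b_1^{e_1}c_1\cdots b_1^{e_k}c_k=\mathrm{id}$ in $B$, and the $n$-RF hypothesis on $(B,C)$ only guarantees that at least $n$ of the $e_i$ share a sign --- mixed signs are allowed. The step where your argument fails is ``for the overall reduction to complete, $\tilde b_i$ must itself lie in $C$''. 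Malnormality does give $\tilde b_i=b_m\alpha_i b_m^{-1}\in B\setminus C$, so the cascade \emph{at that joint} halts after one step; but this is a local obstruction, not a global one. Cascades initiated at neighbouring same-sign joints can run much deeper and eventually absorb $\tilde b_i$ from the outside, so $W$ can still be trivial. (Relatedly, the blanket assertion that every $\alpha_i$ must lie in $C$ ``by the normal form theorem'' is not justified as stated; the normal form theorem only says \emph{some} reduction occurs, not that every joint reduces.)

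The paper avoids this by never trying to exclude sign changes. In Lemma~\ref{lemma: inheritance, tech version} it cuts the cyclic sequence $(e_i)$ into maximal constant-sign runs, reduces each run $u=gb_{i+1}\cdots gb_{i+\ell}$ via the cascade lemma (Lemma~\ref{lemma: gcg}), and observes that each reduced $u$ begins with $\xtt_{-r}$ and ends with $\xtt_r c$. The $2$-RF (malnormality) hypothesis on the end letters $\xtt_{\pm r}$ is then used \emph{between} adjacent runs to prevent cancellation across a sign change, so the concatenation is globally reduced and hence nontrivial whenever every run has length $<n$. The $n$-RTF condition on the centre letter $\xtt_0$ enters only inside each run --- exactly your Case~2 intuition, which is on the right track but cannot stand alone without a correct treatment of mixed signs.
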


We will prove this proposition using the following more specific statement. Recall that each element $g\in A\star_C B\setminus(A\cup B)$ can be written as a reduced word, which is an expression $g=\xtt_1\cdots \xtt_k$ for some $k\ge2$ with $\xtt_i$'s alternating between elements in $A\setminus C$ and $B\setminus C$.
\begin{lemma}\label{lemma: inheritance, tech version}
	Let $g=\xtt_{-r}\xtt_{-r+1}\cdots \xtt_{0}\cdots \xtt_{r-1}\xtt_r$ be a reduced word in $G=A\star_C B$ with $r\ge1$, where $\xtt_i\in A\setminus C$ for all $i\equiv r\mod 2$ and $\xtt_i\in B\setminus C$ otherwise.
	Suppose $\xtt_{-r}$ and $\xtt_{r}$ are both $2$-RF rel $C$, and $\xtt_0$ is $n$-RTF rel $C$ for some $2\le n\le\infty$. Then $g\in G\setminus B$ is $n$-RF rel $B$.
\end{lemma}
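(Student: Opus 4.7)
The plan is to argue by contradiction. Assume there is a relation $g^{e_1}b_1 g^{e_2}b_2 \cdots g^{e_k}b_k = id$ in $G$ with $b_i \in B$ and $b_i \neq id$ whenever $e_i = -e_{i+1}$, and suppose, towards contradiction, that strictly fewer than $n$ of the $e_i$'s are positive and strictly fewer than $n$ are negative. First I would expand the equation by substituting the reduced form of each $g^{e_i}$ to obtain a long word $W$ in the alphabet $A\cup B$, and invoke the amalgam normal form theorem for $G=A\star_C B$: since $W$ represents $id$, it must reduce completely under the standard amalgam reductions (combining adjacent same-factor letters and shuttling $C$-elements between the two factors).

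The crux is a careful junction analysis. At each interface $g^{e_i}\cdot b_i\cdot g^{e_{i+1}}$, the boundary letters are $\xtt_r^{\varepsilon}$ or $\xtt_{-r}^{\varepsilon}$ for appropriate signs $\varepsilon$, all lying in $A\setminus C$. At an \emph{opposite-sign} junction (where $e_i=-e_{i+1}$), the hypothesis forces $b_i \neq id$; if additionally $b_i\in C$, then the first-level amalgamated letter is a conjugate $\xtt_r^{e_i} b_i \xtt_r^{-e_i}$ or $\xtt_{-r}^{e_i}b_i \xtt_{-r}^{-e_i}$, which by the $2$-RF hypothesis on $\xtt_{\pm r}$ (equivalently, malnormality of $C$ via Lemma~\ref{lemma: 2-RF}) cannot lie in $C$. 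Hence the reduction cascade at every opposite-sign junction dies at level one. At a \emph{same-sign} junction, however, the initial merge $\xtt_{re_i}^{e_i}\, b_i\, \xtt_{-re_{i+1}}^{e_{i+1}}$ may lie in $C$ and cascade inward; one must track the cascade depth $s$, where at level $s$ the merged element has absorbed the pairs $\xtt_{(r-j)e_i}^{e_i}$ and $\xtt_{-(r-j)e_{i+1}}^{e_{i+1}}$ for $j=0,\ldots,s-1$, and lies alternately in $A$ and $B$.

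The next step is to show that the middle syllables $\xtt_0^{e_i}$ of each $g^{e_i}$ either survive in the normal form of $W$, or get merged with an adjacent $\xtt_0^{e_{i\pm 1}}$ only via a cascade of depth exceeding $r$; and that, if such a deep cascade does occur, it produces a short identity in the subgroup generated by $\xtt_0$ and $C$. Combining the surviving $\xtt_0$-syllables with the $C$-elements emitted by the various cascades yields an equation in $A$ (resp.~$B$ if $r$ is odd) of the form
\[\xtt_0^{e_{i_1}}\, c_1\, \xtt_0^{e_{i_2}}\, c_2\, \cdots\, \xtt_0^{e_{i_\ell}}\, c_\ell = id,\]
with $c_j\in C$ and $\{i_1<\cdots<i_\ell\}\subseteq\{1,\ldots,k\}$. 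Finally, partition this equation at sign changes of the $e_{i_j}$'s and apply the $n$-RTF hypothesis on $\xtt_0$ to the longest same-sign block (after absorbing the opposite-sign portions into the $c_j$'s using inversion and conjugation tricks that stay inside the subgroup generated by $\xtt_0$ and $C$); this bounds the number of same-sign $\xtt_0$-occurrences, hence the number of same-sign $e_i$'s, by at least $n$, giving the required contradiction.

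The main obstacle is making the cascade bookkeeping rigorous, in particular ruling out deep cascades that could annihilate several $\xtt_0$-syllables at once. The key observation to push through is that such a deep cascade forces an intermediate identity $\xtt_0^{\delta_1} c'_1 \cdots \xtt_0^{\delta_m} c'_m = id$ with $c'_j\in C$, and reducing this to the same-sign setting (by pairing adjacent $\xtt_0$ and $\xtt_0^{-1}$, using the $n$-RTF assumption on $\xtt_0$ only one block at a time) contradicts the $n$-RTF bound or yields the desired count directly. The intermediate letters $\xtt_{\pm 1},\ldots,\xtt_{\pm(r-1)}$ carry no individual hypotheses, so all the nontrivial input comes from the $2$-RF conditions at the ends (to kill opposite-sign cascades) and the $n$-RTF condition at the middle (to bound same-sign runs); the alternating factor structure of $g$ is what routes the cascade so these two ingredients suffice.
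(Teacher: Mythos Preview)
Your proposal identifies the right ingredients --- junction analysis, the $2$-RF hypothesis on $\xtt_{\pm r}$ to kill opposite-sign cascades at depth one, and the $n$-RTF hypothesis on $\xtt_0$ to bound same-sign runs --- but the assembly has a genuine gap.

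The problematic step is the claimed equation
\[
\xtt_0^{e_{i_1}} c_1 \xtt_0^{e_{i_2}} c_2 \cdots \xtt_0^{e_{i_\ell}} c_\ell = id \quad\text{with } c_j\in C.
\]
Your own cascade analysis shows this cannot arise. At an opposite-sign junction the cascade halts at level one, so between $\xtt_0^{e_i}$ and $\xtt_0^{e_{i+1}}$ (with $e_i=-e_{i+1}$) there remains an element of $A\setminus C$ (namely $\xtt_r^{\pm1} b_i \xtt_r^{\mp1}$ or $\xtt_{-r}^{\mp1} b_i \xtt_{-r}^{\pm1}$), flanked by the surviving letters $\xtt_{\pm(r-1)}^{\pm1},\ldots,\xtt_{\pm1}^{\pm1}$ --- not an element of $C$. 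Likewise, a same-sign cascade that halts at some level $1\le k\le r$ leaves the reduced block $\xtt_0\cdots\xtt_{k-1}(\xtt_k c_k\xtt_{-k})\xtt_{-k+1}\cdots\xtt_0$ with middle letter in $A\setminus C$ or $B\setminus C$. So there is no global relation among the $\xtt_0$-syllables with coefficients in $C$; what you actually obtain is that $W$ itself is a reduced alternating word, hence nontrivial --- a direct argument, not the contradiction you set up.

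Even granting such an equation, the final step fails: the $n$-RTF hypothesis constrains only words $\xtt_0 c_1\cdots\xtt_0 c_m$ with \emph{all exponents of the same sign}. Your ``absorbing the opposite-sign portions into the $c_j$'s'' would require these portions to lie in $C$, which they do not; you would need the stronger $n$-RF condition on $\xtt_0$, which is not assumed.

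The paper's proof avoids both issues by never seeking a global equation. It cuts the cyclic word $w$ into maximal same-sign runs $u=g b_{i+1}\cdots g b_{i+\ell}$ with $\ell<n$, reduces each run using Lemma~\ref{lemma: gcg} (so that any consecutive deep cascades produce a subword $\xtt_0 d_{j+1}\cdots\xtt_0 d_{j+m}$ with $m\le\ell<n$, which lies outside $C$ by $n$-RTF on $\xtt_0$), and then uses the $2$-RF condition on $\xtt_{\pm r}$ only to check that no further reduction occurs where two adjacent runs meet. Reorganizing your argument along these lines --- apply $n$-RTF locally within each same-sign run rather than globally --- closes the gap.
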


We need one simple observation in the proof.
\begin{lemma}\label{lemma: gcg}
	Suppose $n\ge3$ in the setting of Lemma \ref{lemma: inheritance, tech version}. Then for any $c\in C$,
	\begin{enumerate}
		\item either there is some $1\le k\le r$ such that the element $h\defeq (\xtt_0\cdots \xtt_r) c (\xtt_{-r}\cdots \xtt_0)\in G$ is represented by a reduced word
		$\xtt_0\cdots (\xtt_k c_k \xtt_{-k}) \cdots \xtt_0$ for some $c_k\in C$,\label{item: gcg case 1}
		\item or the element $h$ defined above reduces to $h=\xtt_0 c_0 \xtt_0\notin C$ for some $c_0\in C$.\label{item: gcg case 2}
	\end{enumerate}
\end{lemma}
\begin{proof}
	Note that $\xtt_i$ and $\xtt_{-i}$ lie in the same free factor for all $i$ in our setup, and $\xtt_r,\xtt_{-r}\in A\setminus C$.
	Let $c_r=c$. If $\xtt_r c_r \xtt_{-r}\in A\setminus C$, then the result holds with $k=r$ as in Case (\ref{item: gcg case 1}).
	If not, then $c_{r-1}\defeq \xtt_r c_r \xtt_{-r}\in C$. In this case, we have $h=\xtt_0\cdots\xtt_{r-1}c_{r-1} \xtt_{-r+1}\cdots \xtt_0$ and we examine whether the element $\xtt_{r-1}c_{r-1} \xtt_{-r+1}\in B$ lies in $C$.
	Continuing this process inductively, 
	\begin{itemize}
		\item either we stop at some $1\le k\le r$ by having $\xtt_k c_k \xtt_{-k}\notin C$, which gives the desired result as in Case (\ref{item: gcg case 1});
		\item or we can reduce $h$ all the way to $h=\xtt_0 c_0 \xtt_0$ for some $c_0\in C$.
	\end{itemize}
	In the second situation, we must have $\xtt_0 c_0 \xtt_0\notin C$ as desired since otherwise $\xtt_0 c_0 \xtt_0 c'=id$ for some $c'\in C$, contradicting the assumption that $\xtt_0$ is $n$-RTF for some $n\ge3$.
\end{proof}

Now we prove Lemma \ref{lemma: inheritance, tech version}.
\begin{proof}[Proof of Lemma \ref{lemma: inheritance, tech version}]
	Note that if $b\in B\setminus C$, then $\xtt_r b\xtt_{r}^{-1}$ is a reduced word in $G$ by definition. If $b\in C\setminus\{id\}$ instead, then $\xtt_r b\xtt_{r}^{-1}$ lies in $A\setminus C$ since $\xtt_r$ is $2$-RF by assumption. Similarly, for any $b\in B\setminus \{id\}$, the expression $\xtt_{-r}^{-1} b\xtt_{-r}$ is either a reduced word already or is an element in $A\setminus C$.
	
	If $n=2$, it suffices to show that $gbg^{-1}b'\neq id$ for all $b,b'\in B\setminus\{id\}$.
	By moving $\xtt_{-r}$ to the end, $gbg^{-1}b'$ is conjugate to
	$$\xtt_{-r+1}\cdots \xtt_{r-1}(\xtt_r b \xtt_{r}^{-1}) \xtt_{r-1}^{-1}\cdots\xtt_{-r+1}^{-1}(\xtt_{-r}^{-1}b'\xtt_{-r}),$$ 
	which is cyclically reduced by the observation above and thus nontrivial.
	
	If $n\ge3$, consider $w=g^{e_1}b_1\cdots g^{e_k} b_k\in G$ for some $k\ge1$, $e_i=\pm1$, and $b_i\in B$ with the property that $b_i\neq id$ when $e_i=-e_{i+1}$ (indices taken mod $k$).
	The goal is to show that $w\neq id$ assuming there are no $n$ $e_i$'s of the same sign. The cyclic sequence of $e_i$'s can be cut into (cyclic) subsequences so that all $e_i$'s in each subsequence are equal and each subsequence has maximal length with this property. Corresponding to a subsequence where all $e_i=1$, we have a subword of the form $u=g b_{i+1} \cdots g b_{i+\ell}$ for some $1\le \ell<n$.
	The word is already reduced near each $b_{i+j}$ if $b_{i+j}\notin C$. When $b_{i+j}\in C$, the word reduces as in Lemma \ref{lemma: gcg}. 
	Summarizing all cases, the subword $u$ reduces to 
	$$u=\xtt_{-r} \cdots \xtt_{-2} \xtt_{-1} \left( \prod_{j=1}^{\ell-1} w_j \right)  \xtt_0 \cdots \xtt_r b_{i+\ell}$$
	for some $0\le k_j\le r$, where
	$$w_j=\xtt_0\cdots \xtt_{k_j-1} (\xtt_{k_j} d_{j} \xtt_{-k_j}) \xtt_{-k_j+1}\cdots \xtt_{-1}, \text{ if }k_j\ge1, \quad \text{and}\quad w_j=\xtt_0 d_j, \text{ if }k_j=0,$$
	for all $1\le j<\ell$, and
	\begin{itemize}
		\item either $d_j\in B\setminus C$ and $k_j=r$, 
		\item or $d_j\in C$ and $\xtt_{k_j} d_j \xtt_{-k_j}\notin C$.
	\end{itemize}
	Note that when $k_j\ge1$, the word $w_j$ is reduced, starting in $A\setminus C$ and ending in $B\setminus C$ (when $r$ is even) or vice versa (when $r$ is odd), where we read $\xtt_{k_j} d_j \xtt_{-k_j}$ as a reduced word of length $3$ when $d_j\in B\setminus C$ and as a single letter when $d_j\in C$.
	In particular, $u$ is clearly a reduced word in $G$ if $k_j\ge1$ for all $j$, as $r$ does not depend on $j$.
	In general, in the expression $\prod_{j=1}^{\ell-1} w_j$, consider a maximal subsequence of consecutive $j$'s with $k_j=0$. That is, suppose for some $0\le j_1<j_2\le \ell$ we have $k_j=0$ for all $j_1<j<j_2$, with $j_1=0$ or $k_{j_1}\ge1$, and $j_2=\ell$ or $k_{j_2}\ge1$. Then the product $\prod_{j_1<j<j_2} w_j$ together with the subsequent letter $\xtt_0$ (from $w_{j_2}$ if $j_2<\ell$ and from the last $\xtt_0$ in the expression of $u$ if $j_2=\ell$) 
	gives an expression $\xtt_0 d_{j_1+1}\cdots  \xtt_0 d_{j_2-1} \xtt_0$ with each such $d_j\in C$. Note that this involves at most $j_2-j_1\le \ell$ copies of $\xtt_0$, so
	it yields an element in $A\setminus C$ if $r$ is even and in $B\setminus C$ if $r$ is odd, since $\xtt_0$ satisfies $n$-RTF rel $C$. It follows that $u$ is always reduced.
	Moreover, the starting $\xtt_{-r}$ (resp. the ending $\xtt_r b_{i+\ell}$) ensures that the word representing $u$ above has no cancellation with the tail of the preceding $g^{-1} b_{i}$ (resp. the head of the succeeding $g^{-1}$) by the observation above. By taking inverses, a similar reduction holds for a subsequence where all $e_i=-1$.
	It follows that $w$ is a nontrivial reduced word, so $w\neq id$.	
\end{proof}

Then we deduce Proposition \ref{prop: inheritance} from Lemma \ref{lemma: inheritance, tech version}.
\begin{proof}[Proof of Proposition \ref{prop: inheritance}]
	By symmetry, it suffices to show that $(G,B)$ is $n$-RF. For any $g\in G\setminus B$, it can be written as a word 
	$g=\xtt_{-r-1}\xtt_{-r}\cdots \xtt_{0}\cdots \xtt_{r}\xtt_{r+1}\in G$ for some $r\ge0$, 
	where $\xtt_i\in A$ if $i\equiv r\mod 2$ and $\xtt_i\in B$ otherwise, and $\xtt_i\notin C$ for all $i$ except that possibly $\xtt_{-r-1}=id$ or $\xtt_{r+1}=id$.
	We may assume $\xtt_{-r-1}=\xtt_{r+1}=id$ since $g$ is $n$-RF rel $B$ if and only if the same holds for any $g'\in BgB$ by Lemma \ref{lemma: double coset}.
	Now if $r\ge1$, then $g$ is in the form of Lemma \ref{lemma: inheritance, tech version} and our assumption implies that $\xtt_{-r},\xtt_r,\xtt_0$ satisfy the requirements. Hence $g$ is $n$-RF rel $B$. 
	
	If $r=0$, then $g=\xtt_0\in A\setminus C$, consider a word $w=g^{e_1}b_1\cdots g^{e_k}b_k$ with $k\ge1$, $b_i\in B$, and $b_i\neq id$ if $e_i=-e_{i+1}$. 
	Those $b_i$'s with the property $b_i\in B\setminus C$ (if exist) cut $w$ into subwords, each of the form $u=\xtt_0^{f_1} c_1 \xtt_0^{f_2}\cdots c_r \xtt_0^{f_{r+1}}$ for some $r\ge0$ with $c_i\in C$, $f_i=\pm1$ (which is equal to some $e_{i'}$) and $c_i\neq id$ if $f_i=-f_{i+1}$.
	Since $\xtt_0\in A\setminus C$ is $n$-RF rel $C$, we see that $u\in A\setminus C$ unless we have $n$ equal $f_i$'s, which means we have $n$ equal $e_i$'s in $w$.
	Hence if there are no $n$ equal $e_i$'s, the subwords in between those $b_i$'s with $b_i\in B\setminus C$ each lies in $A\setminus C$, so $w$ must be a nontrivial element of $G$ as desired. This completes the proof.
\end{proof}

\section{Applications}\label{sec: app}
\subsection{The Kervaire conjecture and related problems}

Now we deduce from Theorem \ref{thm: HNN main} results about a general word in an HNN extension $H=A\star_C$ with $t$-exponent sum $\pm1$. Throughout this section, let $p:H\to\Z$ be the epimorphism sending $t$ to the generator $1\in\Z$ and vanishing on $A$.

\begin{theorem}\label{thm: HNN general word}
	Let $H=A\star_C$ be an HNN extension associated to injections $i_P,i_N:C\to A$ with standard presentation (\ref{eqn: std presentation}). 
	Suppose for some $2\le n\le \infty$, the group-subgroup pairs $(A,i_P(C))$ and $(A,i_N(C))$ are $n$-RF (Definition \ref{def: n-RF}). 
	Then for any $w\in H$ with $p(w)=\pm 1$ and not conjugate to $at^{\pm1}$ for some $a\in A$, any boundary-incompressible $w$-admissible surface $S$ has
	$$-\chi(S)\ge \left(1-\frac{1}{n}\right)\deg(S).$$
\end{theorem}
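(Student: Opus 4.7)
The plan is to reduce Theorem \ref{thm: HNN general word} to the special case handled by Theorem \ref{thm: HNN main} via an enlargement of the HNN vertex group. Since admissible surfaces for $w$ and $w^{-1}$ agree up to reversing orientation on $w$-boundary components, and conjugating $w$ does not affect admissible surfaces, I may assume $p(w) = 1$ and that $w$ is cyclically reduced in the presentation (\ref{eqn: std presentation}). The exclusion that $w$ is not conjugate to $at^{\pm 1}$ then guarantees that at least three occurrences of $t^{\pm 1}$ appear in the cyclically reduced form of $w$.

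The main step is a standard change-of-HNN trick, to be stated as Lemma \ref{lemma: trick}. I would enlarge $A$ to $A^{(k)} = \langle A, tAt^{-1}, \ldots, t^k A t^{-k}\rangle$, which by Bass--Serre theory is realized as an iterated amalgam of $k+1$ conjugate copies of $A$ over conjugates of $i_P(C)$ and $i_N(C)$. Then $H = A^{(k)} \star_{A^{(k-1)}}$ is a new HNN extension with stable letter still $t$, whose edge subgroups $A^{(k-1)}$ and $tA^{(k-1)}t^{-1}$ sit inside $A^{(k)}$ as the two ``extreme'' copies of $A^{(k-1)}$. In this enlarged presentation, every cyclic subword of $w$ of the form $t \alpha t^{-1}$ with $\alpha \in A^{(k-1)}$ is Britton-reducible and absorbs into a single $A^{(k)}$-letter, decreasing the number of stable-letter occurrences in $w$ by two. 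Since a cyclically reduced $w$ with $p(w) = 1$ and at least three $t$'s must exhibit some cyclic adjacency $(t^{+1}, t^{-1})$ in its $t$-sign sequence, iterating this absorption eventually produces a cyclic conjugate of $w$ in the special form
\[
w = a_1 s^{-1} b_1 s \cdots a_{m'} s^{-1} b_{m'} s \cdot x \cdot s
\]
required by Theorem \ref{thm: HNN main}, for some $m' \geq 1$, with $a_i, b_i$ lying outside the new edge subgroups.

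To apply Theorem \ref{thm: HNN main} in the enlarged structure, I need the new pairs $(A^{(k)}, A^{(k-1)})$ and $(A^{(k)}, tA^{(k-1)}t^{-1})$ to be $n$-RF. This follows by induction on $k$ using Proposition \ref{prop: inheritance} at each amalgamation step, together with Lemma \ref{lemma: basic n-RF}(3) to propagate $n$-RF along nested inclusions; the base case is the hypothesized $n$-RF of $(A, i_P(C))$ and $(A, i_N(C))$. The $n$-RF condition on the specific coefficients $a_1$ and $b_{m'}$ produced by the absorption procedure will follow from Lemma \ref{lemma: inheritance, tech version}, as these are reduced words in $A^{(k)}$ whose outermost factors come from $t^{\pm k}At^{\mp k}$; the $n$-RTF requirement on the remaining $a_i, b_i$ is implied by $n$-RF. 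Finally, by the Remark after Definition \ref{def: admissible}, any boundary-incompressible $w$-admissible surface relative to $A$ is automatically a boundary-incompressible $w$-admissible surface relative to $A^{(k)}$, since $A \subseteq A^{(k)}$ and boundary-incompressibility depends only on the $w$-boundary. Hence Theorem \ref{thm: HNN main} delivers $-\chi(S) \geq (1 - 1/n)\deg(S)$.

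The main obstacle lies in the clean execution of the change-of-HNN trick in Lemma \ref{lemma: trick}: verifying that the coefficients $a_1$ and $b_{m'}$ built by iteratively absorbing cyclic $(+,-)$ adjacencies genuinely satisfy the $n$-RF hypothesis relative to the new edge subgroups, and that the remaining $a_i, b_i$ indeed avoid those subgroups. This reduces to careful tracking of Britton normal forms through repeated amalgamations, where the cyclic reducedness of $w$ in the original HNN must be leveraged to preclude unexpected collapse in any intermediate amalgam. Proposition \ref{prop: inheritance} and Lemma \ref{lemma: inheritance, tech version} are the technical workhorses underlying this verification.
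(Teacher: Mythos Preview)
Your overall strategy matches the paper's: pass to the enlarged HNN structure on $H$ with vertex group $A^{(k)}$ (the paper's $A_{k-1}$, up to re-indexing and direction of conjugation), rewrite a conjugate of $w$ in the special form of Theorem \ref{thm: HNN main} via Lemma \ref{lemma: trick}, verify the $n$-RF hypotheses in the new structure, and observe that admissibility, boundary-incompressibility, degree, and Euler characteristic are unchanged.

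There is one point where you make the argument harder than necessary and introduce an unjustified claim. You plan to verify the $n$-RF condition on $a_1$ and $b_{m'}$ individually by invoking Lemma \ref{lemma: inheritance, tech version}, asserting that ``these are reduced words in $A^{(k)}$ whose outermost factors come from $t^{\pm k}At^{\mp k}$.'' But Lemma \ref{lemma: trick} only gives $a_i\in A_{k-1}\setminus t^{-1}A_{k-2}t$ and $b_i\in A_{k-1}\setminus A_{k-2}$; it says nothing about the outermost factors of their reduced forms, and in general they need not be of the shape Lemma \ref{lemma: inheritance, tech version} requires. The paper avoids this entirely: once you know (via Proposition \ref{prop: inheritance} and an induction packaged as Lemma \ref{lemma: n-RF for chain of amalgam} and Corollary \ref{cor: A_k n-RF}) that the \emph{pairs} $(A_{k-1},t^{-1}A_{k-2}t)$ and $(A_{k-1},A_{k-2})$ are $n$-RF, every element outside the edge subgroup is automatically $n$-RF relative to it. So the $n$-RF condition on $a_1,b_m$ and the $n$-RTF condition on all $a_i,b_i$ follow immediately from their lying outside the appropriate edge subgroups, which is exactly what Lemma \ref{lemma: trick} guarantees. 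What you call ``the main obstacle'' thus dissolves: no tracking of Britton normal forms of the individual coefficients is needed.
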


%

The proof reduces the problem to the case of Theorem \ref{thm: HNN main} using a somewhat standard algebraic trick, which at least goes back to Klyachko's original proof of the Kervaire--Laudenbach conjecture for torsion-free groups \cite[Lemma 3]{Klyachko}. Such a statement for free HNN extensions and its proof can be found in \cite[Lemma 4.2]{FennRourke}. We use a similar argument to produce the desired Lemma \ref{lemma: trick} for a general HNN extension.

The trick is to express a conjugate of the given word $w$ into the special form in Theorem \ref{thm: HNN main} at the cost of
passing to a different HNN extension structure of $H=A\star_C$. To see the different HNN extension structures, for each $k\in\Z_{\ge0}$, let $A_k$ be the subgroup generated by all words $t^{-i} a t^i$ with $a\in A$ and $0\le i\le k$. Note that $A_0=A$ and $A_k$ is the free product of $k+1$ copies of $A$ amalgamated over $k$ copies of $C$ when $k\ge1$. For convenience, let $A_{-1}\defeq i_N(C)=A_0\cap tA_0 t^{-1}$. Then $H=A\star_C$ is also the HNN extension of $A_{k}$ over the subgroups $A_{k-1}$ and $t^{-1}A_{k-1}t$ for any $k\ge0$.

\begin{lemma}\label{lemma: trick}
	Let $H=A\star_C$ be an HNN extension. Let $w\in H$ be an element with $p(w)=1$, where $p: H\to\Z$ is the epimorphism mentioned above. Then either $w$ is conjugate to $at$ for some $a\in A$, or there is some $k\in \Z_+$ such that a conjugate of $w$ can be written as
	$a_1 t^{-1} b_1 t \cdots a_m t^{-1} b_m t x t$
	for some $a_i\in A_{k-1}\setminus t^{-1}A_{k-2}t$, $b_i\in A_{k-1}\setminus A_{k-2}$, and $x\in A_{k-1}$ for some $m\ge1$.
\end{lemma}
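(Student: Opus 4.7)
The plan is to locate a conjugate of $w$ inside $A_k\cdot t$ for the smallest possible $k$, and then read off the decomposition from the amalgamated free product structure of $A_k$. The form in the conclusion is precisely a reduced word (up to cyclic rotation) in the alternative HNN structure on $H$ with base $A_{k-1}$ and associated subgroups $A_{k-2}$, $t^{-1}A_{k-2}t$, so producing it amounts to cyclically reducing $w$ in that HNN structure.

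First I will establish the existence of a suitable $k$. Since $p(w)=1$, the element $wt^{-1}$ lies in $\ker p=\bigcup_{j\ge 0}\langle t^{-i}At^i:|i|\le j\rangle$; pick $j$ with $wt^{-1}$ in the $j$th term. Since $t^{-j}\langle t^{-i}At^i:|i|\le j\rangle t^j=A_{2j}$, conjugating $w$ by $t^j$ yields an element of $A_{2j}\cdot t$. So the set of $k\ge 0$ such that a conjugate of $w$ lies in $A_k\cdot t$ is nonempty; let $k$ denote its minimum. If $k=0$, then $w$ is conjugate to $at$ with $a\in A$ and the first alternative holds; henceforth assume $k\ge 1$ and, after conjugation, write $w=Wt$ with $W\in A_k\setminus A_{k-1}$ (by minimality of $k$).

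Next I will invoke the Bass--Serre-theoretic amalgam decomposition $A_k=A_{k-1}\star_D(t^{-1}A_{k-1}t)$, where $D=A_{k-1}\cap t^{-1}A_{k-1}t=t^{-1}A_{k-2}t$. This decomposition arises because $A_k$ is generated by the vertex stabilizers along the length-$k$ geodesic segment from $v_0$ to $t^{-k}v_0$ in the Bass--Serre tree of $H$, which splits as the union of two overlapping length-$(k-1)$ subsegments whose overlap stabilizer contributes the amalgamating subgroup $D$. Since $W\notin A_{k-1}$, its reduced amalgam form is $W=u_0v_1u_1v_2\cdots v_mu_m$ with $m\ge 1$, $u_i\in A_{k-1}$, and $v_i=t^{-1}b_it$ for some $b_i\in A_{k-1}$; standard reducedness gives $b_i\notin A_{k-2}$ for all $i$ and $u_i\notin D$ for $1\le i\le m-1$. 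If $u_0\notin D$, I immediately set $a_i\defeq u_{i-1}$ and $x\defeq u_m$ and am done.

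The main obstacle is when $u_0\in D$, since the amalgam reducedness gives no constraint on the boundary factors $u_0$ or $u_m$. In this case, writing $u_0=t^{-1}\alpha t$ with $\alpha\in A_{k-2}$, I will merge $u_0$ into $v_1$ to form $v_1'=t^{-1}(\alpha b_1)t$ (with $\alpha b_1\in A_{k-1}\setminus A_{k-2}$, since $\alpha\in A_{k-2}$ and $b_1\notin A_{k-2}$), and then cyclically conjugate $w$ by $(v_1')^{-1}$, using $t\cdot v_1'=(\alpha b_1)\cdot t$, to obtain $w'=u_1v_2u_2\cdots v_m(u_m\alpha b_1)\cdot t$. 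This has one fewer amalgam block and new leading factor $u_1\notin D$ (from the middle-block condition), so it delivers the desired form whenever $m\ge 2$. The degenerate case $m=1$ with $u_0\in D$ would collapse $w$ under the same cyclic rotation into an element of $A_{k-1}\cdot t$, contradicting the minimality of $k$; this is where minimality is essential. Hence the construction always succeeds, producing the form stated in the lemma.
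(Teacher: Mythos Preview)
Your proof is correct and follows essentially the same route as the paper: minimise $k$ so that a conjugate of $w$ lies in $A_k\cdot t$, use the amalgam decomposition $A_k=A_{k-1}\star_D t^{-1}A_{k-1}t$, and then cyclically conjugate to force the leading $A_{k-1}$-block out of $D$. The only organisational difference is that the paper additionally minimises the reduced word length of $g=wt^{-1}$ among conjugates with minimal $k$ and then splits into cases depending on whether $g$ starts in $U$ or $V$, whereas you handle the bad case $u_0\in D$ by a single explicit conjugation and invoke minimality of $k$ directly when $m=1$; both arguments accomplish the same cyclic reduction.
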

\begin{proof}
	Let $K=\ker p$, which is an amalgamated free product of infinitely many $A$'s over $C$'s, generated by elements of the form $t^{-i} a t^i$ with $a\in A$ and $i\in\Z$. 
	This can be seen by considering the infinite cyclic cover (corresponding to $K$) of the space $X$ with $\pi_1 X \cong H$ constructed in Section \ref{subsec: setup}.
	For any integers $k\le \ell$, let $A_{k,\ell}\le K$ be the subgroup generated by elements of the form $t^{-i} a t^i$ with $a\in A$ and $k\le i\le \ell$. Comparing to the notation introduced above, we have $A_k=A_{0,k}$ for all $k\in\Z_{\ge0}$. Note that $A_{k,\ell}\le A_{k',\ell'}$ if $k'\le k$ and $\ell\le \ell'$, and $t^{-i} A_{k,\ell}t^i=A_{k+i,\ell+i}$.
	
	Each $g\neq id\in K$ is contained in some $A_{k,\ell}$ where we take $k$ to be maximal and $\ell$ to be minimal with this property. We refer to $A_{k,\ell}$ as the support of $g$.
	
	For any $h\in H$, the element $g\defeq hwh^{-1}t^{-1}$ lies in $K$ with support $A_{\ell,\ell+k}$ for some $\ell\in\Z$ and $k\in\Z_{\ge0}$. Consider all conjugates $hwh^{-1}$ of $w$ such that the number $k$ is minimal. Up to replacing $h$ by $t^\ell h$, we may assume that $g=hwh^{-1}t^{-1}\in A_k$. 
	
	If $k=0$, then $hwh^{-1}=at$ for some $a\in A_0=A$. So it suffices to consider the case $k\ge1$.
	In such cases, $A_k$ is the amalgamated free product of $U\defeq A_{k-1}=A_{0,k-1}$ and $V\defeq t^{-1}A_{k-1}t=A_{1,k}$ over $W$, where $W =A_{1,k-1}$ when $k\ge2$ or $W=i_P(C)$ when $k=1$.
	Hence $g=hwh^{-1}t^{-1}$ can be written as a reduced word in $U\star_W V$, which has length at least two as $A_k$ is the support of $g$.
	
	Among all conjugates $hwh^{-1}$ of $w$ with the property that $g=hwh^{-1}t^{-1}\in A_k$ for the minimal number $k$ above, choose one such that the reduced word representing $g$ is the shortest.
	There are two cases:
	\begin{enumerate}
		\item If the reduced word representing $g$ starts with some element in $U\setminus W$, i.e. $g=u_1v_1\cdots u_m v_m$, where $m\ge 1$, $u_i\in U\setminus W$ and $v_i\in V\setminus W$ for all $i$ except that possibly $v_m=id$, in which case we must have $m\ge2$.
		
		When $v_m\in V\setminus W$, simply let $a_i=u_i$ and $b_i=tv_i t^{-1}$ and $x=id$ for $1\le i\le m$, which gives rise to the desired expression of $hwh^{-1}=gt$. In the case $v_m=id$, define $a_i,b_i$ in the same way for $i\le m-1$ and let $x=u_m$.
		\label{item: case 1 of trick pf}
		
		\item If the reduced word representing $g$ starts with some element in $V\setminus W$, i.e. 
		$g=v_0 u_1\cdots v_m$, where $m\ge 1$, $u_i\in U\setminus W$ and $v_i\in V\setminus W$ for all $i$ except that possibly $v_m=id$.
		
		If $v_m=id$, then $hwh^{-1}=gt$ is conjugate to $u_1 v_1\cdots u_m (tv_0 t^{-1}) t$. Note that  $tv_0t^{-1}\in tVt^{-1}=A_{k-1}=U$, so $g'\defeq u_1 v_1\cdots v_{m-1} [u_m (tv_0 t^{-1})]$ is a (not necessarily reduced) word in $A_k$ of length strictly less than that of the reduced word $g$, contradicting our choice of $g=hwh^{-1}t^{-1}$.
		
		If $v_m\in V\setminus W$, then $hwh^{-1}=gt$ is conjugate to $u_1 v_1\cdots u_m v_m (tv_0 t^{-1}) t$. We have $u_{m+1}\defeq tv_0 t^{-1}\in U$ as noted above, so $g'\defeq u_1 v_1\cdots u_m v_m u_{m+1}$ must be a reduced word in $A_k$ since otherwise its word length in reduced form is strictly smaller than that of $g$. Hence we can proceed as in case (\ref{item: case 1 of trick pf}).
	\end{enumerate}
\end{proof}

To prove Theorem \ref{thm: HNN general word}, we need to check that the reduced word in the new HNN extension structure as in Lemma \ref{lemma: trick} satisfies the conditions in Theorem \ref{thm: HNN main}. This easily follows from Proposition \ref{prop: inheritance} by induction.

\begin{lemma}\label{lemma: n-RF for chain of amalgam}
	For an amalgam $G_{k+1}=H_1\star_{C_1} H_2 \star_{C_2}\cdots H_k\star_{C_k} H_{k+1}$ with $k\ge 1$, suppose for some $2\le n\le \infty$ the image of each $C_i$ in $H_i$ (resp. $H_{i+1}$) is $n$-RF. 
	Then the subgroups $G_k=H_1\star_{C_1} H_2 \star_{C_2}\cdots H_k$ and $H_{k+1}$ are both $n$-RF in $G_{k+1}$.
\end{lemma}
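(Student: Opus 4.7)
The plan is to proceed by induction on $k$, using Proposition \ref{prop: inheritance} as the inductive step and Lemma \ref{lemma: basic n-RF}(3) (the chain inheritance of $n$-RF) as the bridge that lets us feed the inductive hypothesis into the proposition.

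For the base case $k=1$, we have $G_2 = H_1 \star_{C_1} H_2$ with $G_1 = H_1$. The hypothesis says $(H_1, C_1)$ and $(H_2, C_1)$ are both $n$-RF, so Proposition \ref{prop: inheritance} immediately gives that $(G_2, H_1)$ and $(G_2, H_2)$ are $n$-RF, which is exactly what is claimed.

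For the inductive step, suppose the statement holds for $k-1$, i.e., in $G_k$, both $G_{k-1}$ and $H_k$ are $n$-RF. Write $G_{k+1} = G_k \star_{C_k} H_{k+1}$. In order to apply Proposition \ref{prop: inheritance} to this amalgamated free product, I need $(G_k, C_k)$ and $(H_{k+1}, C_k)$ to be $n$-RF. The latter is given by hypothesis. For the former, note that $C_k \le H_k \le G_k$; by hypothesis $(H_k, C_k)$ is $n$-RF, and by the inductive hypothesis $(G_k, H_k)$ is $n$-RF. Then Lemma \ref{lemma: basic n-RF}(3) applied to this chain yields that $(G_k, C_k)$ is $n$-RF. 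Proposition \ref{prop: inheritance} then delivers $(G_{k+1}, G_k)$ and $(G_{k+1}, H_{k+1})$ as $n$-RF, closing the induction.

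There is no real obstacle here: the statement is essentially a bookkeeping consequence of Proposition \ref{prop: inheritance} combined with the chain inheritance from Lemma \ref{lemma: basic n-RF}. The only point to watch is making sure the inductive hypothesis is stated strongly enough, namely that it records the $n$-RF-ness of $G_{k-1}$ in $G_k$ as well as $H_k$ in $G_k$; otherwise we would not have the input $(G_k, H_k)$ needed to upgrade $(H_k, C_k)$ to $(G_k, C_k)$ via Lemma \ref{lemma: basic n-RF}(3).
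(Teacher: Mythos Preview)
Your proof is correct and follows essentially the same approach as the paper: induction on $k$, with Proposition \ref{prop: inheritance} handling the base case and the inductive step, and Lemma \ref{lemma: basic n-RF}(\ref{item: subgroup chain}) applied to the chain $C_k \le H_k \le G_k$ to verify that $(G_k, C_k)$ is $n$-RF. The paper's argument is identical in structure and in the lemmas invoked.
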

\begin{proof}
	We proceed by induction on $k$. The base case $k=1$ is for an amalgam of $k+1=2$ free factors, and we know both free factors are $n$-RF under our assumption by Proposition \ref{prop: inheritance}. 
	Suppose the result holds for amalgams with no more than $k$ free factors and we show $G_k$ and $H_{k+1}$ are $n$-RF in $G_{k+1}$.
	Note that $G_{k+1}$ is the amalgam of $G_k$ with $H_{k+1}$ over $C_k$, so the result follows from Proposition \ref{prop: inheritance} once we show $C_k$ is $n$-RF in $G_k$.
	By the induction hypothesis $H_k$ is $n$-RF in $G_k$, and we know by assumption $C_k$ is $n$-RF in $H_k$. 
	Applying Lemma \ref{lemma: basic n-RF} (\ref{item: subgroup chain}) to the subgroup chain $C_k\le H_k\le G_k$, we see that $C_k$ is $n$-RF in $G_k$ as desired, which completes the proof.
\end{proof}

\begin{corollary}\label{cor: A_k n-RF}
	In the notation above for an HNN extension $G=A\star_C$, if for some $2\le n\le \infty$ both $(A,i_P(C))$ and $(A,i_N(C))$ are $n$-RF, 
	then the pairs $(A_{k-1},t^{-1}A_{k-2}t)$ and $(A_{k-1},A_{k-2})$ are also $n$-RF for all $k\ge1$.
\end{corollary}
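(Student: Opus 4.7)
The plan is to apply Lemma \ref{lemma: n-RF for chain of amalgam} after realizing $A_{k-1}$ explicitly as a chain amalgam of $k$ conjugate copies of $A$ over conjugate copies of $C$. The base case $k=1$ is handled directly: here $A_0=A$ and $A_{-1}=i_N(C)$, while the HNN relation $i_N(c)=t i_P(c) t^{-1}$ gives $t^{-1}A_{-1}t=i_P(C)\subset A$, so the two pairs in question are exactly $(A, i_N(C))$ and $(A, i_P(C))$, both $n$-RF by hypothesis.

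For $k\ge 2$, I would invoke the standard amalgam decomposition (a consequence of Britton's lemma)
\[
A_{k-1}=H_1 \star_{D_1} H_2 \star_{D_2} \cdots \star_{D_{k-1}} H_k,
\]
where $H_j \defeq t^{-(j-1)}At^{j-1}$ and $D_j=H_j\cap H_{j+1}$ equals $t^{-(j-1)}i_P(C)t^{j-1}$ viewed inside $H_j$ and equals $t^{-j}i_N(C)t^j$ viewed inside $H_{j+1}$, the two descriptions agreeing as subgroups of $H$ by the HNN relation. Conjugation by an element of $H$ gives an isomorphism of group-subgroup pairs, and the $n$-RF condition clearly passes across such isomorphisms, so the image of $D_j$ inside $H_j$ (resp.\ $H_{j+1}$) is $n$-RF, being conjugate to $(A,i_P(C))$ (resp.\ $(A,i_N(C))$). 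Hence the hypotheses of Lemma \ref{lemma: n-RF for chain of amalgam} are satisfied, and that lemma yields that the first $k-1$ factors subgroup $H_1\star_{D_1}\cdots\star_{D_{k-2}} H_{k-1}=A_{k-2}$ is $n$-RF in $A_{k-1}$, settling the pair $(A_{k-1}, A_{k-2})$.

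For the other pair $(A_{k-1}, t^{-1}A_{k-2}t)$, observe that $t^{-1}A_{k-2}t$ is generated by $t^{-j}At^j$ for $1\le j\le k-1$, so it coincides with the last $k-1$ factors subgroup $H_2\star_{D_2}\cdots\star_{D_{k-1}} H_k$ of the chain amalgam. The hypotheses of Lemma \ref{lemma: n-RF for chain of amalgam} treat the two inclusions of each amalgamating subgroup $D_j$ symmetrically, so applying the lemma to the reversed chain $H_k\star_{D_{k-1}}\cdots\star_{D_1} H_1$ yields that $H_2\star_{D_2}\cdots\star_{D_{k-1}} H_k=t^{-1}A_{k-2}t$ is $n$-RF in $A_{k-1}$ as well. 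The main (if minor) thing to check is the amalgam structure of $A_{k-1}$ via Britton's lemma; everything else is a clean application of Lemma \ref{lemma: n-RF for chain of amalgam}.
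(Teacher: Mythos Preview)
Your proof is correct and follows essentially the same approach as the paper: both handle $k=1$ directly and, for $k\ge 2$, write $A_{k-1}$ as a chain amalgam of conjugate copies of $A$ over conjugate copies of $C$, then apply Lemma~\ref{lemma: n-RF for chain of amalgam} (together with its symmetric version for the reversed chain) to obtain both pairs. You are simply more explicit than the paper in verifying that the $n$-RF hypothesis transfers to the conjugated pairs and in spelling out the ``symmetry'' step.
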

\begin{proof}
	There is nothing to prove for the case $k=1$ as the assertion agrees with the assumption.
	For $k\ge2$, as we mentioned earlier, $A_{k-1}$ is the amalgam of $k+1$ copies of $A$ over $k$ copies of $C$, where $t^{-1}A_{k-2}t$ and $A_{k-2}$ are identified with the subgroups given by the amalgam of the first and last $k$ copies of $A$. Hence the assertion follows from Lemma \ref{lemma: n-RF for chain of amalgam} (and symmetry).
\end{proof}

Now we prove Theorem \ref{thm: HNN general word} (i.e. Theorem \ref{thmA: main}).

\begin{proof}[Proof of Theorem \ref{thm: HNN general word}]
	Up to replacing $w$ by $w^{-1}$ we may assume $p(w)=1$.
	By our assumption, $w$ is not conjugate to $at$ for any $a\in A$, thus by Lemma \ref{lemma: trick} $w$ is conjugate to a reduced word $w'=a_1 t^{-1} b_1 t \cdots a_m t^{-1} b_m t x t$ in the HNN extension of $A_{k-1}$ over the subgroups $A_{k-2}$ and $t^{-1}A_{k-2}t$ for some $k\in\Z_+$ and $m\ge1$. It is guaranteed by Lemma \ref{lemma: trick} that each $a_i\in A_{k-1}\setminus t^{-1}A_{k-2}t$, so it is $n$-RF and thus also $n$-RTF rel $t^{-1}A_{k-2}t$ by Corollary \ref{cor: A_k n-RF}. Similarly each $b_i$ is $n$-RF and $n$-RTF rel $A_{k-2}$.
	Note that any $w$-admissible surface $S$ for the HNN extension $H=A\star_C$ is also $w$-admissible for the HNN extension structure of $H$ above with vertex group $A_{k-1}$ by enlarging the proper subgroup $A$ to $A_{k-1}$ in Definition \ref{def: admissible}; see Remark \ref{rmk: def}. Moreover, the notion of boundary-incompressiblity stays the same in the process as well as the quantities $\deg(S)$ and $-\chi(S)$. Thus the result follows directly from Theorem \ref{thm: HNN main}.
\end{proof}
%

For a free HNN extension $H=A\star\Z$, the assumptions in Theorem \ref{thm: HNN general word} above are easy to check, so we immediately obtain Theorem \ref{thmA: less tech main} as a corollary:
\begin{corollary}\label{cor: gap for groups without small torsion}
	Let $A$ be an arbitrary group and let $p:A\star\Z\to \Z$ be the retract to $\Z$ induced by the trivial homomorphism $A\to\Z$ and $id_{\Z}$. If $w\in A\star \Z$ has $p(w)=\pm 1$, then any boundary-incompressible $w$-admissible surface $S$ has
	$$-\chi(S)\ge \frac{1}{2} \deg(S).$$
	Moreover, if each nontrivial element in $A$ has order at least $n$ for some $2\le n\le \infty$, then we have a strengthened inequality
	$$-\chi(S)\ge \left(1-\frac{1}{n}\right)\deg(S).$$
\end{corollary}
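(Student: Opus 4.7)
My approach is to recognize this corollary as a direct application of Theorem \ref{thm: HNN general word}. The free HNN extension $A\star\Z$ is naturally the HNN extension $A\star_C$ with $C=\{id\}$ the trivial subgroup and $i_P,i_N\col\{id\}\to A$ the inclusions of the identity; under this identification the epimorphism $p$ in the corollary matches the $t$-exponent projection in the general theorem. So the entire task reduces to checking the $n$-RF hypothesis for the pair $(A,\{id\})$ and then invoking Theorem \ref{thm: HNN general word}.

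First I would unpack Definition \ref{def: n-RF} in the case $C=\{id\}$. For any $a\in A\setminus\{id\}$ and any word $a^{e_1}c_1\cdots a^{e_k}c_k$ with $c_i\in C=\{id\}$, all $c_i$ are trivial, so condition (1) of the definition forces $e_i=e_{i+1}$ for every $i$ (indices mod $k$); otherwise the condition would be violated and the implication would be vacuous. Hence all exponents share the same sign, and the word becomes $a^{\pm k}$. Condition (2) (no $n$ exponents of the same sign) then simply says $k<n$, and the required nontriviality is exactly $a^k\neq id$ for all $1\le k<n$, i.e., $\mathrm{ord}(a)\ge n$. Therefore $(A,\{id\})$ is $n$-RF if and only if every nontrivial element of $A$ has order at least $n$, and the case $n=2$ is automatic, giving the unconditional bound in the corollary.

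With the $n$-RF condition verified on both copies $i_P(C)=i_N(C)=\{id\}$, Theorem \ref{thm: HNN general word} applies and directly yields
\[
-\chi(S)\ge\left(1-\frac{1}{n}\right)\deg(S)
\]
for every boundary-incompressible $w$-admissible surface, provided $w$ is not conjugate to $at^{\pm1}$ for some $a\in A$.

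The only remaining subtlety, and where I expect the main technical care, is the excluded case where $w$ is conjugate to $at^{\pm1}$. My plan is to show the inequality holds vacuously here by arguing that no boundary-incompressible $w$-admissible surface in simple normal form exists. Indeed, such $w$ has $|w|=1$, so the tight loop $\gamma$ is cut into a single arc $\gamma_1$; and because $C=\{id\}$, every turn automatically has trivial winding number $id\in C$. Enumerating the four ordered pairs $(\gamma_1^{\pm1},\gamma_1^{\pm1})$ against the side-compatibility requirement shows that the only admissible turn types are $(\gamma_1,id,\gamma_1^{-1})$ and $(\gamma_1^{-1},id,\gamma_1)$; both are forbidden by Lemma \ref{lemma: no backtracking} applied to a boundary-incompressible surface. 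Since in any simple normal form every piece has a polygonal boundary with at least one turn and, by convention, every component of $S$ carries nonempty $w$-boundary, this rules out the existence of any boundary-incompressible $w$-admissible surface for such $w$. Combined with Lemma \ref{lemma: simple normla form}, which allows us to reduce to simple normal form without increasing $-\chi(S)$ or changing $\deg(S)$, this settles the exceptional case and completes the proof.
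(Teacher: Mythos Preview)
Your proposal is correct and follows essentially the same approach as the paper: identify $A\star\Z$ as the free HNN extension over the trivial subgroup, verify that $(A,\{id\})$ is $n$-RF precisely when every nontrivial element has order at least $n$ (and automatically $2$-RF), invoke Theorem~\ref{thm: HNN general word}, and dispose of the exceptional case $w\sim at^{\pm1}$ by observing that the only side-compatible turns are $(\gamma_1,id,\gamma_1^{-1})$ and its reverse, which are excluded for boundary-incompressible surfaces. Your unpacking of the $n$-RF condition and your handling of the exceptional case are slightly more detailed than the paper's, but the argument is the same; one small terminological slip is calling those two turn types ``admissible'' when in the paper's convention they already fail the second requirement defining~$\Tcal$.
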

\begin{proof}
	Think of $H=A\star\Z$ as a free HNN extension. Then clearly by definition any $a\neq id\in A$ is $n$-RF rel the trivial subgroup $id$ if $a$ has order at least $n$.
	So the group-subgroup pair $(A,id)$ is $2$-RF in all cases and $n$-RF if each nontrivial element in $A$ has order at least $n$.
	Now if $w$ is not conjugate to $at^{\pm 1}$, then Theorem \ref{thm: HNN general word} implies the desired bound.
	If $w=at^{\pm 1}$, the only two turn types are $(t,id,t^{-1})$ and $(t^{-1},id,t)$, neither of which is an admissible turn (Section \ref{subsec: possible pieces}). Hence there is no boundary-incompressible $w$-admissible surface in this case, and thus the assertion is vacuous.
\end{proof}

Now we turn to applications related to the Kervaire--Laudenbach conjecture. We first deduce from Theorem \ref{thm: HNN general word} a Freiheitssatz theorem analogous to Corollary \ref{cor: injetivity for special w} for a rather general word in an HNN extension using the same argument. This should also essentially follow from \cite[Theorem 4.1]{FennRourke}.
\begin{theorem}\label{thm: HNN Freiheitssatz}
	Let $H=A\star_{C}$ be an HNN extension associated to isomorphic subgroups $C_1,C_2\le A$ so that $(A,C_1)$ and $(A,C_2)$ are both $\infty$-RF. Then for any $w\in H$ with $p(w)=\pm 1$ and not conjugate to $at^{\pm1}$, the natural map $A\to H/\llangle w\rrangle$ is injective.
\end{theorem}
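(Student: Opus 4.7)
The plan is to mimic the proof of Corollary \ref{cor: injetivity for special w}, now using the more general Theorem \ref{thm: HNN general word} in place of Theorem \ref{thm: HNN main}. First I would note that since $\llangle w\rrangle=\llangle w^{-1}\rrangle$, we may replace $w$ by $w^{-1}$ if necessary and assume $p(w)=1$; this also ensures $w$ is not conjugate into $A$, so the notion of $w$-admissible surface from Definition \ref{def: admissible} applies. The assumption that $w$ is not conjugate to $at^{\pm1}$ is preserved under this swap.

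Suppose for contradiction the natural map $A\to H/\llangle w\rrangle$ is not injective. Then there exists $a\neq id\in A$ lying in the normal closure $\llangle w\rrangle$, so $a$ can be written as
\[
a=(h_1 w^{n_1}h_1^{-1})\cdots(h_k w^{n_k}h_k^{-1})
\]
for some $k\ge 1$, elements $h_i\in H$, and nonzero integers $n_i$. Among all such expressions of $a$ (for varying $k$ and choices), I would choose one with $k$ minimal.

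By Example \ref{example: adm surf from relations}, this equation produces a $w$-admissible surface $f:S\to X$, where $S$ is a sphere with $k+1$ open disks removed, whose unique $A$-boundary represents $a$ and whose $k$ remaining boundary components are $w$-boundary representing $w^{n_1},\dots,w^{n_k}$. Consequently
\[
-\chi(S)=k-1\quad\text{and}\quad\deg(S)=\sum_{i=1}^k |n_i|\ge k.
\]
By the minimality of $k$, any boundary compression of $S$ would yield an expression of $a$ of the same form with strictly fewer factors, contradicting minimality; hence $S$ is boundary-incompressible.

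Now I would apply Theorem \ref{thm: HNN general word} with $n=\infty$, whose hypotheses are exactly the $\infty$-RF assumption on $(A,C_1)$ and $(A,C_2)$ together with $w$ not being conjugate to $at^{\pm1}$. This yields
\[
k-1=-\chi(S)\ge\deg(S)\ge k,
\]
a contradiction. No step here is a real obstacle: the whole difficulty has already been absorbed into Theorem \ref{thm: HNN general word}, and the deduction is formally identical to the one in Corollary \ref{cor: injetivity for special w}, just phrased for a general HNN extension rather than the free one $A\star\Z$.
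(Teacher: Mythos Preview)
Your proposal is correct and follows essentially the same approach as the paper's own proof: assume non-injectivity, build a planar $w$-admissible surface from a minimal-length expression of $a$ as a product of conjugates of powers of $w$, observe it is boundary-incompressible, and derive a contradiction from Theorem \ref{thm: HNN general word} with $n=\infty$. The only cosmetic differences are that you explicitly reduce to $p(w)=1$ (unnecessary since Theorem \ref{thm: HNN general word} already allows $p(w)=\pm1$) and you allow arbitrary exponents $n_i$ and use $\deg(S)=\sum|n_i|\ge k$, whereas the paper writes $\deg(S)=k$ (implicitly taking each $n_i=\pm1$); either way the contradiction $k-1\ge k$ follows.
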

\begin{proof}
	Suppose the natural map is not injective, that is, there is some $a\neq id \in A$ that lies in $\llangle w \rrangle$. As in Example \ref{example: adm surf from relations} (with $H=A\star_{C}$), this gives rise to an equation (\ref{eqn: relation}), which provides a $w$-admissible surface $S$ of degree $\deg(S)=k$ with $-\chi(S)=k-1$ (as it is a sphere with $k+1$ disks removed) for some $k\in\Z_+$. Moreover, as explained in Example \ref{example: adm surf from relations}, when $k$ is minimal among all equations of this form, $S$ is boundary-incompressible. Hence by Theorem \ref{thm: HNN general word} with $n=\infty$, we have $k-1=-\chi(S)\ge\deg(S)=k$, which leads to a contradiction. Thus the natural map must be injective.
\end{proof}

As an example, this applies to splittings of surface groups over $\Z$ as HNN extensions.
\begin{corollary}[{Howie--Saeed \cite[Theorem 1.2]{HowieSaeed_surfgrpquotient}}]\label{cor: surface group}
	For a closed orientable surface $S$, let $\beta$ be a simple non-separating loop and let $\alpha$ be a loop with algebraic intersection number $\pm1$ with $\beta$ and geometric intersection number at least $2$ with $\beta$. Then the natural map $\pi_1(S\setminus\beta)\to \pi_1(S)/\llangle w_\alpha \rrangle$ is injective, where $w_\alpha$ is the class in $\pi_1(S)$ corresponding to $\alpha$.
\end{corollary}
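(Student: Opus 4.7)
The plan is to realize $\pi_1(S)$ as the HNN extension $A\star_{C}$ with vertex group $A=\pi_1(S\setminus\beta)$ and edge group $C\cong\Z$, and then apply Theorem~\ref{thm: HNN Freiheitssatz} with $w=w_\alpha$. Cutting $S$ along the non-separating curve $\beta$ produces a compact surface $S'$ with two boundary components representing elements $\beta_1,\beta_2\in A$ (for some choice of basepoint and orientations induced from $\beta$). Since $S'$ has nonempty boundary, $A$ is a finitely generated free group. The two injections $i_P,i_N:C\to A$ of the HNN extension send a generator of $C$ to $\beta_1$ and $\beta_2$ respectively, while the standard projection $p:\pi_1(S)\to\Z$ sends $[\gamma]$ to the algebraic intersection number of $\gamma$ with $\beta$; in particular $p(w_\alpha)=\pm1$ by hypothesis.

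Next I would verify that $w_\alpha$ is not conjugate to $at^{\pm1}$ for any $a\in A$. If it were, then $w_\alpha$ would admit a representative loop obtained by concatenating a loop in $S\setminus\beta$ with a single arc crossing $\beta$, so that this representative meets $\beta$ in exactly one point; that would force the geometric intersection number $|\alpha\cap\beta|\le 1$, contradicting the hypothesis. (Equivalently, by Britton's lemma the cyclic $t$-length of a cyclically reduced conjugate of $w_\alpha$ is a conjugacy invariant, and for curves placed in minimal position on $S$ this $t$-length equals the geometric intersection number, which is $\ge 2$.)

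The main technical check is that $(A,C_i)$ is $\infty$-RF for $i=1,2$, where $C_i=\langle\beta_i\rangle$. Since $\beta_i$ is a boundary component of the compact surface $S'$, it generates a maximal cyclic subgroup of the free group $A$; in particular $\beta_i$ is not a proper power. I would then deduce $\infty$-RF from the following general fact: in any free group $F$, a cyclic subgroup $\langle c\rangle$ generated by a non-proper-power element $c$ is $\infty$-RF. Indeed, for $a\in F\setminus\langle c\rangle$, the $2$-generated subgroup $\langle a,c\rangle\le F$ is itself free; it has rank~$2$, for otherwise $a$ and $c$ would commute, which together with the non-proper-power hypothesis would force $a\in\langle c\rangle$. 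Since free groups of finite rank are Hopfian, any two-element generating set of a rank-$2$ free group is a free basis, so $\langle a,c\rangle=\langle a\rangle\star\langle c\rangle$, which is exactly the $\infty$-RF condition.

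With all three hypotheses in hand, Theorem~\ref{thm: HNN Freiheitssatz} immediately gives injectivity of $A\to\pi_1(S)/\llangle w_\alpha\rrangle$. The step I expect to require the most care is ruling out that $w_\alpha$ is conjugate to $at^{\pm1}$: translating the algebraic condition on the HNN-length back to the geometric intersection number relies on the standard but nontrivial minimal-position technology for simple closed curves on surfaces. The $\infty$-RF verification is conceptually the key ingredient, but once one identifies boundary subgroups of a free group as generated by non-proper-powers, the argument reduces to a short application of the Hopf property.
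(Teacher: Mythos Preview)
Your proposal is correct and follows essentially the same approach as the paper: realize $\pi_1(S)$ as an HNN extension of the free group $\pi_1(S\setminus\beta)$ over $\Z$, check the hypotheses, and apply Theorem~\ref{thm: HNN Freiheitssatz}. The only difference is in the verification of the $\infty$-RF condition: the paper simply notes that the boundary subgroups are free factors of the free group $\pi_1(S\setminus\beta)$, whereas you give a more self-contained argument (non-proper-power plus Hopficity) that in fact proves the slightly stronger statement that any cyclic subgroup of a free group generated by a non-proper-power is $\infty$-RF.
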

\begin{proof}
	Here $\pi_1(S)$ is an HNN extension of the free group $\pi_1(S\setminus\beta)$ over two $\Z$ subgroups corresponding to $\beta$. These $\Z$ subgroups are factors in the free group, so they are $\infty$-RF. The algebraic and geometric intersection numbers ensure that $w_\alpha$ meets the requirements in Theorem \ref{thm: HNN Freiheitssatz}, so the result follows.
\end{proof}
\begin{remark}\label{rmk: at is special}
	It is essential to exclude the case where $w$ is conjugate to $at^{\pm1}$ in Theorems \ref{thm: HNN general word} and  \ref{thm: HNN Freiheitssatz} and Corollary \ref{cor: surface group}.
	For instance, the surface group $H=\langle a,b,c,d\mid [a,b]=[c,d]\rangle$ is an HNN extension of the free group $A=\langle a,c,d \rangle$ over $\Z$ subgroups $C_1=\langle  a^{-1}\rangle$ and $C_2=\langle a^{-1}[c,d]\rangle$ (by identifying the chosen generators), where the generator $b$ plays the role of the stable letter $t$ in this HNN extension.
	For the word $w=b$, note that the element $[a,b]=[c,d]$ lies in the vertex group $A$ as well as the normal closure of $b$, hence it is in the kernel of $A\to H/\llangle w\rrangle$. This does not violate the results above since $b$ has geometric intersection $1$ with $a$. See \cite{Howie_error} and \cite[Exmaple in Section 2]{HowieSaeed_surfgrpquotient} for more examples. Such cases need to be treated separately.
\end{remark}

Specializing to the free HNN extension, we deduce Theorem \ref{thmA: Klyachko}, originally proved by Klyachko.
\begin{theorem}[Klyachko \cite{Klyachko}]\label{thm: Klyachko}
	Let $A$ be a torsion-free group and let $p:A\star\Z\to \Z$ be the retract to $\Z$ induced by the trivial homomorphism $A\to\Z$ and $id_{\Z}$. If $w\in A\star \Z$ has $p(w)=\pm 1$, then the natural map $A\to (A\star \Z)/\llangle w\rrangle$ is injective.
\end{theorem}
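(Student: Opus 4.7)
The plan is to derive Theorem \ref{thm: Klyachko} as a direct corollary of Corollary \ref{cor: gap for groups without small torsion}, combined with the translation between algebraic relations and admissible surfaces given in Example \ref{example: adm surf from relations}. Since $A$ is torsion-free, every nontrivial element of $A$ has infinite order, so Corollary \ref{cor: gap for groups without small torsion} applies with $n = \infty$: for $H = A \star \Z$, any boundary-incompressible $w$-admissible surface $S$ satisfies $-\chi(S) \ge \deg(S)$.

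Suppose for contradiction that the natural map $A \to H / \llangle w \rrangle$ fails to be injective, so there exists $a \ne \mathrm{id}$ in $A \cap \llangle w \rrangle$. Then we can write
\[
a = (h_1 w^{\epsilon_1} h_1^{-1}) \cdots (h_k w^{\epsilon_k} h_k^{-1})
\]
for some $k \ge 1$, exponents $\epsilon_i \in \{\pm 1\}$, and $h_i \in H$ (expanding any higher powers into products of $w^{\pm 1}$ if necessary). Fix such an expression with $k$ minimal. Example \ref{example: adm surf from relations} then yields a $w$-admissible surface $f \colon S \to X$ where $S$ is a sphere with $k+1$ open disks removed: one boundary is an $A$-boundary representing the conjugacy class of $a$, and the remaining $k$ components are $w$-boundaries, each of degree $1$. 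Hence $\deg(S) = k$ and $-\chi(S) = k-1$. By the discussion at the end of Example \ref{example: adm surf from relations}, minimality of $k$ forces $S$ to be boundary-incompressible, since any boundary compression would produce a shorter expression of the same form.

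Applying Corollary \ref{cor: gap for groups without small torsion} with $n = \infty$ to this $S$ now gives $k - 1 = -\chi(S) \ge \deg(S) = k$, the desired contradiction. The degenerate case in which $w$ is conjugate to $at^{\pm 1}$ for some $a \in A$ is handled uniformly by the same corollary, which asserts that no boundary-incompressible $w$-admissible surface exists at all in that case, directly contradicting the existence of $S$.

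Since the complexity bound has been proved upstream (Theorem \ref{thm: HNN main} and its packaging into Theorems \ref{thm: HNN general word} and \ref{cor: gap for groups without small torsion}), there is no substantive obstacle here. The two steps above, namely translating the algebraic relation into a planar admissible surface and invoking minimality to arrange boundary-incompressibility, are routine applications of the framework developed in Sections \ref{sec: adm} and \ref{sec: app}.
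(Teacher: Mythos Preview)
Your proof is correct and follows essentially the same route as the paper's: both reduce to the surface-complexity inequality (packaged as Theorem \ref{thm: HNN general word}/Corollary \ref{cor: gap for groups without small torsion}) and invoke the planar surface from Example \ref{example: adm surf from relations} with $k$ minimal to force boundary-incompressibility. The only cosmetic differences are that the paper cites Theorem \ref{thm: HNN Freiheitssatz} (which already wraps the contradiction argument you spell out), and that it disposes of the exceptional case $w\sim at^{\pm1}$ algebraically (the quotient is then visibly isomorphic to $A$) rather than via the nonexistence of admissible turns noted in the proof of Corollary \ref{cor: gap for groups without small torsion}; both treatments are valid.
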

\begin{proof}
    Taking $H=A\star\Z$ as the free HNN extension of $A$ over the trivial subgroup, Theorem \ref{thm: HNN Freiheitssatz} implies the desired result except for the case where $w$ is conjugate to $at^{\pm 1}$, but the injectivity is obvious in this exceptional case.
\end{proof}

Now we prove Theorem \ref{thmA: proper power}, where the relator is a proper power but the free factor $A$ is arbitrary.
\begin{theorem}[Klyachko--Lurye \cite{KlyachkoLurye}]\label{thm: proper power}
    Let $A$ be an arbitrary group and let $p:A\star\Z\to \Z$ be the retract to $\Z$ induced by the trivial homomorphism $A\to\Z$ and $id_{\Z}$. If $w\in A\star \Z$ has $p(w)=\pm 1$, then the natural map $A\to (A\star \Z)/\llangle w^m\rrangle$ is injective for any $m\ge 2$.
\end{theorem}
\begin{proof} 
    The proof is similar to that of Theorem \ref{thm: HNN Freiheitssatz}. If the map is not injective, we obtain a surface $S$ that is a sphere with $k+1$ disks removed, where $k$ boundary components each represents $w^{\pm m}$ and the remaining one represents some $a\neq id\in A$ for some $k\ge1$. We consider $S$ as a $w$-admissible surface, then it has $\deg(S)=km$. Moreover, $S$ is boundary-incompressible when we take such an equation of $a$ with $k$ minimal. Hence by Corollary \ref{cor: gap for groups without small torsion}, we have 
    $$k-1=-\chi(S)\ge \frac{1}{2}\deg(S)=\frac{mk}{2}\ge k$$
    as $m\ge2$, and we get a contradiction. Thus the map must be injective.
\end{proof}

Note that for quotients by high powers, the problem is easier when the exponent $m$ gets larger. For instance, the case $m\ge7$ follows from small-cancellation theory \cite[Corollary 9.4]{LyndonSchupp}, and the case $m\ge6$ was proved earlier by Gonzales-Acu\~na and Short \cite{AcunaShort}. To the best knowledge of the author, the strongest previous result regarding Conjecture \ref{conj: Freiheitssatz for proper power} is a theorem of Howie \cite[Theorem A]{Howie_properpower}, which proves the case when $m\ge 4$. 

Finally we prove Theorem \ref{thmA: malnormal HNN}, generalizing Theorem \ref{thm: proper power} to HNN extensions, similar to the generalization (Theorem \ref{thm: HNN Freiheitssatz}) of Klyachko's Theorem \ref{thm: Klyachko}.
\begin{theorem}\label{thm: proper power HNN}
	Let $H=A\star_{C}$ be an HNN extension associated to isomorphic malnormal subgroups $C_1,C_2\le A$. Then for any $w\in H$ with $p(w)=\pm 1$ and not conjugate to $at^{\pm1}$, the natural map $A\to H/\llangle w^m\rrangle$ is injective for any $m\ge2$.
\end{theorem}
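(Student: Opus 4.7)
The plan is to mimic the proof of Theorem \ref{thm: proper power}, but with Theorem \ref{thm: HNN Freiheitssatz}'s route for HNN extensions, using Lemma \ref{lemma: 2-RF} to convert the malnormality hypothesis into the $2$-RF condition required as input to Theorem \ref{thm: HNN general word}.

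First I would assume, for contradiction, that the natural map $A\to H/\llangle w^m\rrangle$ is not injective, so that some $a\neq id\in A$ lies in the normal closure $\llangle w^m\rrangle$. Writing $a$ as a product of $k$ conjugates of $w^{\pm m}$ and taking $k$ minimal, the discussion of Example \ref{example: adm surf from relations} produces a boundary-incompressible $w$-admissible surface $S\subset X$ which is a sphere with $k+1$ open disks removed: one $A$-boundary component representing $a$, and $k$ $w$-boundary components each representing $w^{\pm m}$. Thus $\deg(S)=km$ and $-\chi(S)=k-1$.

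Next I would invoke Lemma \ref{lemma: 2-RF} to conclude that the malnormality of $C_1$ and $C_2$ in $A$ is exactly the statement that the group-subgroup pairs $(A, C_1)$ and $(A, C_2)$ are $2$-RF. Since $p(w)=\pm 1$ and $w$ is assumed not conjugate to any $at^{\pm 1}$, the hypotheses of Theorem \ref{thm: HNN general word} are satisfied with $n=2$, which applied to our boundary-incompressible $S$ yields
\[
-\chi(S)\ \ge\ \left(1-\tfrac{1}{2}\right)\deg(S)\ =\ \tfrac{1}{2}\deg(S).
\]
Substituting the values above gives $k-1\ge km/2\ge k$ as $m\ge 2$, the desired contradiction. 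Hence the map $A\to H/\llangle w^m\rrangle$ is injective.

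The argument is essentially routine once Theorem \ref{thm: HNN general word} is in hand; the one thing to double-check is that there is no hidden subtlety in the exceptional case $w$ conjugate to $at^{\pm 1}$, but this is excluded by hypothesis (and would otherwise be handled separately, as in the analogous remarks following Theorem \ref{thm: HNN Freiheitssatz}). No step here is really an obstacle: the main conceptual content is packaged into Lemma \ref{lemma: 2-RF} and Theorem \ref{thm: HNN general word}, and this theorem is merely their combination with the standard surface-from-relation construction.
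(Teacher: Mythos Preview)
Your proposal is correct and follows essentially the same approach as the paper: invoke Lemma \ref{lemma: 2-RF} to translate malnormality into the $2$-RF condition, then apply Theorem \ref{thm: HNN general word} with $n=2$ to the boundary-incompressible $w$-admissible surface arising from a minimal-length expression of $a$ as a product of conjugates of $w^{\pm m}$, obtaining $k-1\ge km/2\ge k$. The paper's proof is slightly terser, simply referring back to the proof of Theorem \ref{thm: proper power}, but the content is identical.
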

\begin{proof}
	By Lemma \ref{lemma: 2-RF}, the assumption that $C_1$ and $C_2$ are malnormal is equivalent to that $(A,C_1)$ and $(A,C_2)$ are $2$-RF. The rest of the proof is identical to that of Theorem \ref{thm: proper power}, except that we apply Theorem \ref{thm: HNN general word} with $n=2$ instead to bound $-\chi(S)$ by $\frac{1}{2}\deg(S)$.
\end{proof}

\begin{remark}\label{rmk: strengthening}
	In Theorem \ref{thm: HNN general word}, instead of assuming the group-subgroup pairs $(A,i_P(C))$ and $(A,i_N(C))$ to be $n$-RF, it suffices to assume that in a cyclically reduced expression of $w$ each cyclic subword $tat^{-1}$ (resp. $t^{-1}at$) has $a\in A\setminus i_P(C)$ (resp. $a\in A\setminus i_N(c)$) being $n$-RF rel $i_P(C)$ (resp. $i_N(C)$). Similarly, in Theorem \ref{thm: proper power HNN}, we can weaken the malnormality assumption to that $aC_i a^{-1}\cap C_i=\{id\}$ for letters $a$ appearing in a cyclically reduced expression of $w$, $i=1,2$.
	
	We briefly sketch how we should modify the proof to obtain this strengthening of Theorem \ref{thm: HNN general word}, which implies the strengthened version of Theorem \ref{thm: proper power HNN}. First, the proof of Lemma \ref{lemma: trick} actually can be used to show that a \emph{cyclic} conjugate of the word $w$ has the desired standard form $a_1 t^{-1} b_1 t \cdots a_m t^{-1} b_m t x t$, where each $a_i\in A_{k-1}\setminus t^{-1}A_{k-2}t$ (resp. $b_i\in A_{k-1}\setminus A_{k-2}$) is a product of elements $t^{-j}at^j$ for some $0\le j\le k-1$ and $a\in A$ that appears in the reduced word expression of $w$ so that at least one such element in the product has $j=0$ (resp. $j=k-1$) and $a\in A\setminus i_P(C)$ $n$-RF rel $i_P(C)$ (resp. $a\in A\setminus i_N(C)$ $n$-RF rel $i_N(C)$). Now an analog of Lemma \ref{lemma: n-RF for chain of amalgam} and Corollary \ref{cor: A_k n-RF} proved using Lemma \ref{lemma: inheritance, tech version} instead of Proposition \ref{prop: inheritance} implies that each $a_i$ (resp. $b_i$) is $n$-RF rel $t^{-1}A_{k-2}t$ (resp. $A_{k-2}$). So the result follows by applying Theorem \ref{thm: HNN main}.
\end{remark}

\subsection{Relative hyperbolicity and linear isoperimetric inequality}
Now we give a different application of Corollary \ref{cor: gap for groups without small torsion} to prove a linear isoperimetric inequality to deduce relative hyperbolicity of groups of the form $(A\star\Z)/\llangle w^k \rrangle$, recovering the main theorem in \cite{KlyachkoLurye}.

Fix an integer $k\ge2$. Let $A$ be an arbitrary group and let $w$ an element of $A\star\Z$ with $p(w)=\pm1$ as in Theorem \ref{thm: proper power}. 
Let $t$ be a generator of $\Z$. Then the group $G\defeq(A\star\Z)/\llangle w^k \rrangle$ has finite relative presentation $G=\langle A, t\mid w^k\rangle$. 

Recall that $G$ is hyperbolic relative to $A$ in the sense of Osin \cite[Definition 1.6]{Osin} if there is a linear isoperimetric inequality: There is some constant $C>0$ such that for any $n\in\Z_+$, any word in the alphabet $A\cup\{t,t^{-1}\}$ of length at most $n$ representing $id_G$ is a product of $m$ conjugates of $w^k$ or $w^{-k}$ for some $m\le Cn$.

The theorem below recovers the second part of the main theorem in \cite{KlyachkoLurye} with a more explicit bound on the linear isoperimetric constant. Note that this is a generalization of the classical fact that one-relator groups with torsion (i.e. those of the form $F_n/\llangle w^k\rrangle$ for $k\ge2$) are Gromov-hyperbolic. This can be seen by taking $A=F_{n-1}$ and using the fact that a group is hyperbolic if it is hyperbolic relative to a hyperbolic subgroup \cite[Corollary 2.41]{Osin}.
\begin{theorem}\label{thm: linear isoperimetric}
	In the setup above, if $k\ge2$ and $A$ has no $2$-torsion, or if $k\ge3$, then the linear isoperimetric inequality above holds for $G=(A\star\Z)/\llangle w^k\rrangle$ with the linear isoperimetric constant $C=7|w|$, where $|w|$ is the cyclically reduced word length of $w$ in the free product $A\star\Z$.
	In particular, $G$ is hyperbolic relative to $A$.
\end{theorem}
\begin{proof}
	Suppose a word $u$ in $A\cup\{t,t^{-1}\}$ of length $|u|$ represents $id_G$, then $u=w_1\cdots w_m$ in $A\star\Z$ where each $w_i$ is a conjugate of $w^k$ or $w^{-k}$. Suppose $m$ is minimal in all such equations (fixing $u$).
	Conjugating both sides of the equation, it suffices to show $m\le C|u|$ assuming $u$ is a cyclically reduced word in $A\star\Z$, as otherwise $|u|$ is even larger.
	So we may express $u$ as $u=a_1 t^{k_1}\cdots a_\ell t^{k_\ell}$, where $a_i\neq id \in A$ and $k_i\neq0\in\Z$, and $|u|=\ell+\sum |k_i|$.
	
	Note that $p(u)=\sum k_i$ (with a suitable choice of $t$) and $p(w_i)=p(w^{\pm k})=\pm k$, thus the equation $u=w_1\cdots w_m$ implies that $k$ must divide $\sum k_i$. Let $q=\sum k_i/k\in\Z$.
	
	First consider the (easier) case where $q=0$, i.e. $p(u)=\sum k_i=0$. Then $u$ is a product of conjugates of $a_1,\cdots, a_\ell$ by powers of $t$ (with no $t$ left). Hence the equation $u=w_1\cdots w_m$ can be represented by a $w$-admissible surface $S$ that is the sphere with $\ell+m$ boundary components, $\ell$ of which represent the conjugacy classes of $a_i$'s and the others represent $w^{\pm k}$.
	It is boundary-incompressible by minimality of $m$. Thus by Corollary \ref{cor: gap for groups without small torsion} we have
	$$\ell+m-2=-\chi(S)\ge \lambda\deg(S)=\lambda mk,$$
	where $\lambda\ge1/2$, and in addition $\lambda\ge 2/3$ if $A$ has no $2$-torsion.
	
	This implies
	$$|u|=\ell+\sum|k_i|> \ell-2\ge (\lambda k-1)m.$$
	Note that $\lambda k-1\ge 1/2$ if $k\ge3$ and $\lambda k-1\ge 1/3$ if $k\ge2$ and $A$ has no $2$-torsion.
	So $C_0\defeq \frac{1}{\lambda k-1}\le 3$ under either assumptions, and we have
	$m\le C_0|u|$ as desired.
	
	For the general case, multiply $w^{-kq}$ to both sides of the equation $u=w_1\cdots w_m$ to obtain a new equation $u'=w_1\cdots w_m (w^{-k\cdot \mathrm{sign}(q)})^{|q|}$.
	Then the new word $u'$ has $p(u')=0$ and its length satisfies
	$$|u'|\le |u|+\left|\sum k_i\right| |w|\le \ell+(1+|w|)\sum|k_i|.$$
	If $m'$ is the minimal number of conjugates of $w^{\pm k}$ with their product equal to $u'$, then
	$u$ is the product of at most $m'+q$ conjugates of $w^{\pm k}$, and thus $m\le m'+q$. 
	On the other hand, applying the linear isoperimetric inequality proved above to $u'$, we have
	$$m'\le C_0|u'|\le C_0 [\ell+(1+|w|)\sum|k_i|]\le (2C_0|w|)(\ell+\sum|k_i|)=2C_0|w||u|.$$
	Combining with $q\le |\sum k_i|\le \sum |k_i|\le |u|\le |w||u|$, we get
	$$m\le m'+q\le (2C_0+1)|w||u|,$$
	which completes the proof noting that $2C_0+1\le 7$ as $C_0\le 3$.
	
\end{proof}

\section{Questions}\label{sec: questions}
We conclude by listing a few questions related to our results.

\begin{question}
	Does Theorem \ref{thmA: less tech main} hold under the weaker assumption $p(w)\neq0$ (or $w$ does not conjugate into the free factor $A$), especially for the special case of $n=\infty$?
\end{question}

The algebraic trick (Lemma \ref{lemma: trick}) reducing a word with $p(w)=1$ to words of the specific form in Theorem \ref{thm: HNN main} plays a crucial role in the proof of Theorem \ref{thmA: main} (and Theorem \ref{thmA: less tech main} as its consequence). Such a trick seems unavailable for the more general setting, and thus finding a more direct proof without reducing to those words of the specific form might be a starting point to obtain such a generalization to words with $p(w)\neq0$.

The minimal complexity problem we consider here suggests the study of a quantity analogous to stable commutator length. Given a group $H$ with a proper subgroup $A$, define the \emph{geometric filling norm} of an element $w$ in $H$ relative to $A$ as 
$$\mathrm{gfill}_{H,A}(w)\defeq \inf_S \frac{-\chi(S)}{\deg(S)},$$ 
where the infimum is taken over all boundary-incompressible $w$-admissible surfaces (relative to $A$ as in Definition \ref{def: admissible}). Then our main results (Theorems \ref{thmA: less tech main} and \ref{thmA: main}) provide uniform lower bounds of $\mathrm{gfill}_{H,A}(w)$, analogous to spectral gap results of stable commutator length (e.g. \cite{CF:sclhypgrp,BBF,CH:sclgap}). 

\begin{remark}\label{rmk: comparison}
	An admissible surface $S$ relative to $A$ in the (relative) scl sense \cite[Definition 2.8]{Chen:sclBS} is also a $w$-admissible surface (relative to $A$), and geometric degree is no less than the absolute value of the algebraic degree. So it is immediate that we have $$2\scl_{H,A}(w)\ge \mathrm{gfill}_{H,A}(w);$$
	see \cite[Section 2.2]{Chen:sclBS} for the topological definition of $\scl_{H,A}$.
	
	However, our main results (Theorems \ref{thmA: less tech main} and \ref{thmA: main}) do not directly imply meaningful lower bounds of $\scl_{H,A}(w)$ because the assumption $p(w)=\pm 1$ ensures that $w$ is homologically nontrivial (relative to $A$) and $\scl_{H,A}(w)=+\infty$ by convention. Generalizing our results to cover some $w$ with $p(w)=0$ would yield lower bounds for relative scl. It could be interesting even in the simple setting of $H=A\star \Z$ with $A=\Z/2$, where it seems likely that $\scl_{H,A}(w)\ge 1/4$ for all $w$ not conjugate into $A$; compare this to Theorem \ref{thmA: less tech main} with $n=2$.
\end{remark}

Many other nice properties of stable commutator length might hold analogously for $\mathrm{gfill}_{H,A}$. For simplicity, consider below the case where $A=\{id\}$ and denote $\mathrm{gfill}_H\defeq \mathrm{gfill}_{H,\{id\}}$.

In comparison with rationality results of stable commutator length (e.g. \cite{Cal:rational,Chen:sclBS}), which are related to finding surface subgroups, we ask:
\begin{question}
	For a free group $H$, is $\mathrm{gfill}_H(w)$ rational for each $w$? Is there a $w$-admissible surface realizing the infimum in the definition of $\mathrm{gfill}_H(w)$ for each $w$? If so, is there an algorithm to find such a minimizer? What about other groups?
\end{question}

Another fascinating part of stable commutator length is the Bavard duality \cite{Bavard_duality} relating it to homogeneous quasimorphisms. This is an important tool to obtain lower bounds of stable commutator length and prove spectral gap results. Hence it is natural to ask:
\begin{question}
	Is there an analog of Bavard's duality for $\mathrm{gfill}_H$? What are the dual objects?
\end{question}


\bibliographystyle{alpha}
\bibliography{kervaire}

\begin{thebibliography}{GAnS86}

\bibitem[Bav91]{Bavard_duality}
Christophe Bavard.
\newblock Longueur stable des commutateurs.
\newblock {\em Enseign. Math. (2)}, 37(1-2):109--150, 1991.

\bibitem[BBF16]{BBF}
Mladen Bestvina, Ken Bromberg, and Koji Fujiwara.
\newblock Stable commutator length on mapping class groups.
\newblock {\em Ann. Inst. Fourier (Grenoble)}, 66(3):871--898, 2016.

\bibitem[Bro84]{Brodskii}
S.~D. Brodski\u{\i}.
\newblock Equations over groups, and groups with one defining relation.
\newblock {\em Sibirsk. Mat. Zh.}, 25(2):84--103, 1984.

\bibitem[Cal09a]{Cal:sclbook}
Danny Calegari.
\newblock {\em scl}, volume~20 of {\em MSJ Memoirs}.
\newblock Mathematical Society of Japan, Tokyo, 2009.

\bibitem[Cal09b]{Cal:rational}
Danny Calegari.
\newblock Stable commutator length is rational in free groups.
\newblock {\em J. Amer. Math. Soc.}, 22(4):941--961, 2009.

\bibitem[CF10]{CF:sclhypgrp}
Danny Calegari and Koji Fujiwara.
\newblock Stable commutator length in word-hyperbolic groups.
\newblock {\em Groups Geom. Dyn.}, 4(1):59--90, 2010.

\bibitem[CH19]{CH:sclgap}
Lvzhou Chen and Nicolaus Heuer.
\newblock Spectral gap of scl in graphs of groups and $3$-manifolds, {arXiv}
  1910.14146, v2, 2019.

\bibitem[Che18]{Chen:sclfpgap}
Lvzhou Chen.
\newblock Spectral gap of scl in free products.
\newblock {\em Proc. Amer. Math. Soc.}, 146(7):3143--3151, 2018.

\bibitem[Che20]{Chen:sclBS}
Lvzhou Chen.
\newblock Scl in graphs of groups.
\newblock {\em Invent. Math.}, 221(2):329--396, 2020.

\bibitem[FR96]{FennRourke}
Roger Fenn and Colin Rourke.
\newblock Klyachko's methods and the solution of equations over torsion-free
  groups.
\newblock {\em Enseign. Math. (2)}, 42(1-2):49--74, 1996.

\bibitem[GAnS86]{AcunaShort}
Francisco Gonz\'{a}lez-Acu\~{n}a and Hamish Short.
\newblock Knot surgery and primeness.
\newblock {\em Math. Proc. Cambridge Philos. Soc.}, 99(1):89--102, 1986.

\bibitem[Gor83]{Gordon:DehnSurg}
C.~McA. Gordon.
\newblock Dehn surgery and satellite knots.
\newblock {\em Trans. Amer. Math. Soc.}, 275(2):687--708, 1983.

\bibitem[GR62]{GerstenhaberRothaus}
Murray Gerstenhaber and Oscar~S. Rothaus.
\newblock The solution of sets of equations in groups.
\newblock {\em Proc. Nat. Acad. Sci. U.S.A.}, 48:1531--1533, 1962.

\bibitem[How81]{Howie_LocIndFrei}
James Howie.
\newblock On pairs of {$2$}-complexes and systems of equations over groups.
\newblock {\em J. Reine Angew. Math.}, 324:165--174, 1981.

\bibitem[How87]{Howie_generalize}
James Howie.
\newblock How to generalize one-relator group theory.
\newblock In {\em Combinatorial group theory and topology ({A}lta, {U}tah,
  1984)}, volume 111 of {\em Ann. of Math. Stud.}, pages 53--78. Princeton
  Univ. Press, Princeton, NJ, 1987.

\bibitem[How90]{Howie_properpower}
James Howie.
\newblock The quotient of a free product of groups by a single high-powered
  relator. {II}. {F}ourth powers.
\newblock {\em Proc. London Math. Soc. (3)}, 61(1):33--62, 1990.

\bibitem[How02]{Howie_cyclic}
James Howie.
\newblock A proof of the {S}cott-{W}iegold conjecture on free products of
  cyclic groups.
\newblock {\em J. Pure Appl. Algebra}, 173(2):167--176, 2002.

\bibitem[How04]{Howie_error}
James Howie.
\newblock Erratum: ``{S}ome results on one-relator surface groups'' [{B}ol.
  {S}oc. {M}at. {M}exicana (3) {\bf 10} (2004), {S}pecial {I}ssue, 255--262;
  mr2199352].
\newblock {\em Bol. Soc. Mat. Mexicana (3)}, 10(Special Issue):545--546, 2004.

\bibitem[HS09]{HowieSaeed_surfgrpquotient}
James Howie and Muhammad~Sarwar Saeed.
\newblock Freiheitss\"{a}tze for one-relator quotients of surface groups and of
  limit groups.
\newblock {\em Q. J. Math.}, 60(3):313--325, 2009.

\bibitem[IK18]{IK}
Sergei~V. Ivanov and Anton~A. Klyachko.
\newblock Quasiperiodic and mixed commutator factorizations in free products of
  groups.
\newblock {\em Bull. Lond. Math. Soc.}, 50(5):832--844, 2018.

\bibitem[Ker65]{Kervaire}
Michel~A. Kervaire.
\newblock On higher dimensional knots.
\newblock In {\em Differential and {C}ombinatorial {T}opology ({A} {S}ymposium
  in {H}onor of {M}arston {M}orse)}, pages 105--119. Princeton Univ. Press,
  Princeton, N.J., 1965.

\bibitem[Kir78]{Kirby:oldlist}
Rob Kirby.
\newblock Problems in low dimensional manifold theory.
\newblock In {\em Algebraic and geometric topology ({P}roc. {S}ympos. {P}ure
  {M}ath., {S}tanford {U}niv., {S}tanford, {C}alif., 1976), {P}art 2}, Proc.
  Sympos. Pure Math., XXXII, pages 273--312. Amer. Math. Soc., Providence,
  R.I., 1978.
\newblock Available at
  \url{https://math.berkeley.edu/~kirby/papers/Kirby%20-%20Problems%20in%20low%20dimensional%20manifold%20theory%20-%20MR0520548.pdf}.

\bibitem[KL12]{KlyachkoLurye}
Anton~A. Klyachko and Denis~E. Lurye.
\newblock Relative hyperbolicity and similar properties of one-generator
  one-relator relative presentations with powered unimodular relator.
\newblock {\em J. Pure Appl. Algebra}, 216(3):524--534, 2012.

\bibitem[Kly93]{Klyachko}
Anton~A. Klyachko.
\newblock A funny property of sphere and equations over groups.
\newblock {\em Comm. Algebra}, 21(7):2555--2575, 1993.

\bibitem[KM23]{Kourovka}
E.~I. Khukhro and V.~D. Mazurov.
\newblock Unsolved problems in group theory. the kourovka notebook, 2023.

\bibitem[Lev62]{Levin}
Frank Levin.
\newblock Solutions of equations over groups.
\newblock {\em Bull. Amer. Math. Soc.}, 68:603--604, 1962.

\bibitem[LS77]{LyndonSchupp}
Roger~C. Lyndon and Paul~E. Schupp.
\newblock {\em Combinatorial group theory}.
\newblock Springer-Verlag, Berlin-New York, 1977.
\newblock Ergebnisse der Mathematik und ihrer Grenzgebiete, Band 89.

\bibitem[Osi06]{Osin}
Denis~V. Osin.
\newblock Relatively hyperbolic groups: intrinsic geometry, algebraic
  properties, and algorithmic problems.
\newblock {\em Mem. Amer. Math. Soc.}, 179(843):vi+100, 2006.

\bibitem[Pes08]{Pestov}
Vladimir~G. Pestov.
\newblock Hyperlinear and sofic groups: a brief guide.
\newblock {\em Bull. Symbolic Logic}, 14(4):449--480, 2008.

\bibitem[Rom12]{surveyEqnoverGrps}
Vitali\u{\i} Roman'kov.
\newblock Equations over groups.
\newblock {\em Groups Complex. Cryptol.}, 4(2):191--239, 2012.

\bibitem[Sho83]{Short}
Hamish~Buchanan Short.
\newblock {\em Topological methods in group theory: the adjunction problem}.
\newblock PhD thesis, University of Warwick, 1983.

\end{thebibliography}

\end{document}